\newtheorem{theorem}{Theorem}[section]
\newtheorem*{theorem*}{Theorem}
\newtheorem{corollary}[theorem]{Corollary}
\newtheorem{proposition}{Proposition}[section]
\newtheorem{prop}{Proposition}[section]
\newtheorem{remark}[theorem]{Remark}
\def\qed{\hfill $\square$}
\def\Ric{\text{Ric}}
\def\C{\Bbb C}
\def\Z{\Bbb Z}
\def\R{\Bbb R}
\def\CP{{\Bbb C} P}
\def\Sph{\Bbb S}
\def\bA{\bar{A}}
\def\bB{\bar{B}}
\def\bC{\bar{C}}
\def\bD{\bar{D}}
\def\id{\operatorname{id}}
\def\Ric{\operatorname{Ric}}
\def\Rm{\operatorname{Rm}}
\newcommand{\de}{\mathrm{d}}
 \newcommand{\beq}{\begin{equation}}
 \newcommand{\eeq}{\end{equation}}
 \newcommand{\beqn}{\begin{eqnarray*}}
 \newcommand{\eeqn}{\end{eqnarray*}}
 \newcommand{\beqa}{\begin{eqnarray}}
 \newcommand{\eeqa}{\end{eqnarray}}
 \newcommand{\CR}{\nonumber \\}
\newcommand{\HP}{{\mathbb{H}P}}
\numberwithin{equation}{section}
\begin{document}
\title[Ancient solutions of Ricci flow]{\bf Ancient solutions of Ricci flow on spheres and generalized Hopf fibrations}

\author{Ioannis Bakas, Shengli Kong and Lei Ni}



\date{}

\maketitle

\begin{abstract} Ancient solutions arise in the study of Ricci flow singularities.
Motivated by the work of Fateev on 3-dimensional ancient solutions we construct high dimensional ancient solutions to Ricci flow on spheres and complex projective spaces as well as the twistor spaces over a compact quaternion-K\"ahler manifold. Differing from Fateev's examples most of our examples  are non-collapsed. The construction of this paper,  different from the ad hoc anastz of Fateev, is  systematic, generalizing (as well as unifying)
the previous constructions of Einstein metrics by Bourguignon-Karcher, Jensen, and Ziller in the sense that the existence problem to a backward nonlinear parabolic PDE is reduced to the study of  nonlinear
ODE systems. The key step of solving the reduced nonlinear ODE system is to find suitable monotonic and conserved quantities under Ricci flow with symmetry. Besides supplying new possible singularity models for Ricci flow on spheres and projective spaces, our solutions are counter-examples to some folklore conjectures on ancient solutions of Ricci flow on compact manifolds.
 As a by-product, we infer that some nonstandard Einstein metrics on spheres and complex projective spaces are unstable fixed points of the Ricci flow.

\end{abstract}

\section{Introduction}

Ancient solutions to Ricci flow arise as singularity models in the formation of finite time singularities of the Ricci flow on compact manifolds. In dimension $2$, there exists the well-known example due to Fateev-Onofri-Zamolodchikov \cite{FOZ}, King \cite{King} and Rosenau \cite{Ro}, which is
often called sausage model.
In this paper we present examples of ancient solutions on spheres (as well as some other compact manifolds). The three dimensional example on $\Sph^3$ is originally due to Fateev \cite{Fa96}. Because of many interests  in this example, we add here details (in the appendix of this paper ) to make it more accessible to mathematicians. The higher dimensional generalization is obtained by varying the connection metric on a principle bundle over a K\"ahler-Einstein manifold or a quaternion-K\"ahler manifold with positive scalar curvature, or by variations of a special Riemannian submersion structure, then solving a nonlinear ODE system derived out of the Ricci flow equation.

Besides the fact that they provide the prototype for singularities of Ricci flow solutions, there are several other motivations to  construct ancient solutions to Ricci flow. The first comes from \cite{P2} in which Perelman constructed a non-collapsing  rotationally symmetric ancient solution on $\Sph^n$. (In fact the original example is constructed on $\Sph^3$. With the recent strong convergence result of B\"ohm and Wilking \cite{BW}, this can be easily adapted to  dimensions above three \cite{Chowetc3}.) A natural question is whether or not there exist ancient solutions other than  the Einstein (trivial) ones and the rotationally symmetric examples of Perelman, especially in view of the result of Daskalopoulos, Hamilton and Sesum \cite{Das} on 2-dimensional ancient solutions.
On the other hand,
 in \cite{Ni-anc} the third author proved that any type-I (according to Hamilton \cite{form} ancient solutions can be divided into type-I and II), non-collapsed, compact ancient solution, whose curvature operator lies inside a  pinching family (in the sense of B\"ohm and Wilking \cite{BW}), including in particular the one constructed from the cone of  $2$-positive curvature operators
as well as the one from the cone of positive complex sectional curvatures \cite{BS}, must be isometric to a spherical space form. This generalizes the corresponding  result of Hamilton \cite{form} in dimension two and that of Perelman \cite{P-private} in dimension three. Notice that Perelman's example is of positive curvature operator, non-collapsed, but of type-II.
Thus, a natural question  arises  whether or not the non-collapsing condition in the above mentioned classification result can be removed so that one still has the same assertion for type-I ancient solutions. The third motivation, which is related to Perelman's example as well as the classification result, is that there exists a speculation asserting that  a type-II ancient solution of dimension three must be isometric to the rotationally symmetric one constructed by Perelman. This was formulated on  page 389 of \cite{CLN} along with some other questions on ancient solutions. Recently this speculation has been confirmed in \cite{Das} for dimension two,
where, in fact, a complete classification was obtained.

Concerning  ancient solutions of Ricci flow, there also exist motivations from physics, as they describe trajectories of the renormalization
group equations of certain asymptotically free local quantum field theories in the ultra-violet regime (see, for instance, \cite{Sasha} for
an overview). In
particular, two-dimensional non-linear sigma models with target space a Riemannian manifold with metric $g$ are perturbatively renormalizable
and their beta-function coincides with the Ricci curvature tensor at one loop \cite{Polya, Friedan}. Thus, the Ricci flow describes
changes of the metric $g$ (viewed as generalized coupling) under changes of the logarithm of the world-sheet length scale in quantum theory
provided that the curvature is small. Two-dimensional quantum non-linear sigma models with positively curved target space, and their ancient solutions,
serve as toy models for exploring reliably, within perturbation theory, the high energy behavior of asymptotically
free quantum field theories in four space-time dimensions, such as Yang-Mills theories for strong interactions \cite{Gross, Politz}.

The example of Fateev \cite{Fa96}, which will be presented here in detail together with a thorough analysis of its properties,
provides a negative answer to the last two questions raised above in dimension three. In \cite{Fa96}, the solution was described by
a set of functions given with explicit formulae. It is by no means easy to check that these functions indeed give an ancient solution to Ricci flow. In Section 3 along with  long computations in Appendix we provide a detailed verification of Fateev's result. Besides that it gives rise to a very interesting family  of ancient solutions, it also provides useful insight for the construction of various examples in higher dimensions. In Section 4, for the sake of higher dimensional examples,  we first study a couple of special solutions derived from Fateev's example. The generalization to higher dimensions is done via some general formulations that include metric constructions, using the  connection metrics on principle bundles, as well as the variational construction via a Riemannian submersion structure. In particular, the construction of higher dimensional examples via principle bundles is presented in Sections 5 and 6 (we summarize various useful formulations and collect the necessary formulae of curvature tensors in Section 2). The construction via the Riemannian submersion is slightly more general and is needed for one example on $\Sph^{15}$ and many other examples  on complex projective spaces and flag manifolds.  This is described in Section 7.

It turns out that the geometric part of our construction is very much related to the construction of Einstein metrics on spheres, complex projective spaces, and general homogenous spaces, a subject that has been extensively studied \cite{BK, J, WZ, Ziller1, Ziller2} (see also \cite{Besse} and references therein for a more complete list). Extra analytic part is needed here since we have to solve the Ricci flow equation, which is reduced to a nonlinear ODE system, instead of Einstein equation,  which is equivalent to a quadratic algebraic equation in this formulation. When solving the nonlinear system equivalent to the Ricci flow equation, instead of finding the solutions explicitly as in \cite{Fa96}, we prove the global existence by finding a first integral, since  the ODE system involved in the high dimensional case of this paper seems a bit more complicated than Fateev's three dimensional case.  Here,  we merely focus on solutions on spheres and the total space of the generalized Hopf fibrations, even though some of the techniques can be adapted to other manifolds. We also study the geometric properties of these examples and show that  they provide   counter-examples to several  speculations in higher dimensions.

The main difference between our examples and Fateev's three dimensional example is that some of our ancient solutions are non-collapsed, while every one in Fateev's family of examples is collapsed. Sometimes this is useful. For example the speculation that any non-collapsed positively curved ancient solution  on spheres must be rotationally symmetric is refuted by one of our examples.
The existence of ample examples shows, in particular, that a higher dimensional classification of ancient solutions on spheres is in general a lot more complicated, if not impossible, than the surface case achieved in \cite{Das} (noting that for dimension two, any ancient solution to Ricci flow on closed manifolds resides on the 2-sphere, unless it is flat).

{\it Another interesting feature of our examples is that some of them   `connect' non-canonical Einstein metrics of lower entropy (in the sense of Perelman \cite{P-entropy}) to the round Einstein metric on spheres. In particular, it implies that these non-canonical Einstein metrics with lower entropy on spheres are unstable fixed points of the Ricci flow equation.}

 Here we note that there have been some studies on the stability of Einstein metrics as well as shrinking Ricci soltions, for example in \cite{CHI}, via entropy functional of Perelman.  Another set of examples  {\it evolve  a Hermitian metric into a K\"ahler metric on complex projective spaces. }In fact, they `connect' the non-canonical Einstein metrics (which is not K\"ahler) to the Fubini-Study metric on the complex projective spaces. Hence it also shows {\it  examples of unstable fixed points of Ricci flow equation on the complex projective spaces}. This applies similarly to the non-K\"ahler Einstein metric on the twistor space over any quaternion-K\"ahler manifold with positive scalar curvature. Hence as a consequence of the main theorems of this paper, we demonstrate many unstable Einstein metrics.
Moreover some other examples in this paper have the effect of collapsing the  fiber of a Riemannian submersion structure as  the solutions develop singularities. A similar picture has been described in the program of B\"ohm and Wilking on Ricci flow in high dimensions \cite{BW-pro}. (In a recent work \cite{SW}, among other things, Song and Weinkove also show that similar phenomenon happens for K\"ahler-Ricci flow.) In particular,  on  $\Sph^{15}$, the following result is a  consequence of the special cases of Theorem \ref{main1-u1}, Theorem \ref{qk-main1} and Theorem \ref{sb-main1}:
\begin{theorem} Besides the rotationally symmetric type-II ancient solution constructed by Perelman, on $\Sph^{15}$, there are at least five nontrivial (non-Einstein) type-I ancient solutions to Ricci flow. The first one is collapsed with positive curvature operator, which converges to the round metric as the time approaches to the singularity. The second and the third ones are non-collapsed, with positive sectional curvature,  each `connecting'  one of the two known nonstandard Einstein metrics (at $t=-\infty$) to the round metric  as the time approaches to the singularity. The fourth one `starts' with (at $t=-\infty$) a nonstandard Einstein metric and collapses the fiber sphere $\Sph^3$ in the generalized Hopf fibration $\Sph^3\to \Sph^{15}\to \HP^3$ as the time approaches to the singularity. The fifth ancient solution `starts' with another nonstandard  Einstein metric and collapses the fiber sphere $\Sph^7$ in the generalized Hopf fibration $\Sph^7\to \Sph^{15}\to \Sph^8$ as the time approaches to the singularity. Here `connecting' means as $t$ approaches to each of the two ends the re-scaled metric converges to metrics on both ends, and `starts' means as $t\to -\infty$ the re-scaled metric limits to the `starting' metric.
\end{theorem}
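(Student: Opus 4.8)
The statement is a consolidation of Theorems \ref{main1-u1}, \ref{qk-main1} and \ref{sb-main1} specialized to the fifteen-sphere, so the plan is to exhibit $\Sph^{15}$ as the total space of each of the three relevant generalized Hopf fibrations, read off the five ancient solutions produced by those theorems, and then verify the stated geometric and asymptotic properties one by one. The three fibrations to invoke are the circle bundle $\Sph^1\to\Sph^{15}\to\CP^7$, the quaternionic bundle $\Sph^3\to\Sph^{15}\to\HP^3$, and the bundle $\Sph^7\to\Sph^{15}\to\Sph^8$. On each, the Ricci flow equation for the connection (or submersion) metrics reduces, through the curvature formulae collected in Section 2, to the nonlinear ODE system whose global existence backward in time is established in the cited theorems via a first integral. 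Thus the existence of the ancient solutions is already in hand; the remaining work is bookkeeping together with the verification of the qualitative claims.

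First I would treat the collapsed solution. Using the circle fibration $\Sph^1\to\Sph^{15}\to\CP^7$, Theorem \ref{main1-u1} supplies an ancient solution in which, as $t$ approaches the singular time, the fiber circle shrinks while the base converges to the round $\CP^7$ and the rescaled total-space metric converges to the round $\Sph^{15}$; one checks from the curvature formulae that this solution has positive curvature operator throughout and is collapsed, the normalized injectivity radius degenerating along the $\Sph^1$ direction. Next come the two non-collapsed, positively sectionally curved solutions. These should arise from the submersion/bundle constructions in which the metric carries two independent scaling functions, one for the fiber and one for the base, and the first integral singles out two distinct trajectories, each limiting as $t\to-\infty$ to one of the two known nonstandard Einstein metrics on $\Sph^{15}$ and, as $t$ tends to the singular time, to the round metric after rescaling. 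Here I would verify (i) that the two trajectories are genuinely distinct and correspond to the two different nonstandard Einstein metrics, by analyzing the two hyperbolic fixed points of the normalized system, and (ii) that along each trajectory the sectional curvature stays positive and the solution is non-collapsed, which reduces to sign estimates on the scaling functions furnished by the first integral.

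The remaining two solutions are the fiber-collapsing trajectories on the quaternionic and octonionic-type fibrations: on $\Sph^3\to\Sph^{15}\to\HP^3$ the $\Sph^3$ fiber collapses (Theorem \ref{qk-main1}), and on $\Sph^7\to\Sph^{15}\to\Sph^8$ the $\Sph^7$ fiber collapses (Theorem \ref{sb-main1}). In each case I would check that the relevant trajectory emanates as $t\to-\infty$ from the corresponding nonstandard Einstein metric, distinct from the two metrics used for the connecting solutions, so that all five solutions are mutually non-isometric. Finally I would confirm that none of the five is Einstein or rotationally symmetric, thereby separating them from the trivial examples and from Perelman's solution.

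The step I expect to be the main obstacle is establishing the type-I condition for all five solutions simultaneously, namely that $\sup_{M}|\Rm|(\cdot,t)\,\cdot\,|t|$ stays bounded as $t\to-\infty$, and, jointly with it, pinning down the exact identification of the limiting Einstein metrics at the $t\to-\infty$ end. Both hinge on a sharp asymptotic analysis of the normalized ODE trajectories near their fixed points: the type-I rate follows once one computes the linearization at each fixed point and checks that the curvature quantities decay no faster than the parabolic scale, while the identification requires matching each unstable eigendirection to a specific nonstandard Einstein metric among those in the Einstein-metric literature. The curvature positivity, operator in the collapsed case and sectional in the non-collapsed cases, is a secondary but still delicate point, since the curvature formulae of Section 2 must be evaluated along the entire trajectory rather than only at the endpoints.
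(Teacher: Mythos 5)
Your proposal follows essentially the same route as the paper, which obtains the theorem precisely by applying Theorem \ref{main1-u1} to the circle bundle $\mathsf{S}^1\to\Sph^{15}\to\CP^7$ and Theorems \ref{qk-main1} and \ref{sb-main1} (together with Corollary \ref{sb-collapsing}) to the fibrations $\Sph^3\to\Sph^{15}\to\HP^3$ and $\Sph^7\to\Sph^{15}\to\Sph^8$, the latter requiring the Riemannian submersion formulation since it is not a principal bundle. The only remark worth adding is that the type-I verification you single out as the main obstacle is handled far more cheaply in the paper --- no linearization at the fixed points is needed, only the bound $|\widetilde{\Rm}|\le C/b$ from the curvature formulae combined with $C_1\tau\le b(\tau)\le C_2\tau$, which follows directly from the ODE for $b$.
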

One consequence of the above result (as well as the general Theorems implying it) may be of significance is that {\it  $\Sph^3 \times \R^{4n}$ and $\Sph^7 \times \R^8$ can be the singularity models of Ricci flow on spheres.} Note here that the neck-pinching examples have been previously  constructed in \cite{Si, AK}.

 Here we call an ancient solution {\it collapsed} if one can not find a $\kappa>0$ such that it is $\kappa$-noncollapsed on all scales in the sense of Perelman \cite{P-entropy}. We refer the reader to Theorem \ref{main1-u1} in Section 5, Theorem \ref{qk-main1} in Section 6 and Theorem \ref{sb-main1} in Section 7 for much general theorems. These theorems particularly  imply that {\it there are at least one nontrivial type-I ancient solution on $\Sph^{2m+1}$, at least three nontrivial type-I ancient solutions on $\Sph^{4m+3}$ and two on $\CP^{2m+1}$. Theorem \ref{qk-main1} also implies similar result on the twistor spaces over quaternion-K\"ahler manifolds with positive scalar curvature.}

 It remains an interesting question if the present considerations can be modified appropriately to obtain noncompact examples of ancient solutions. It is also not known if one can extend Fateev's construction of type-II ancient solutions. As a modest classification question we propose that for the homogenous ancient solutions as well as  ones of the cohomogeneity one in view of the recent progresses in this direction on positively curved manifolds \cite{GWZ}. A more specific question is to classify the cohomogeneity one three dimensional ancient solutions. After the first draft of this paper, we were brought attention to the preprint \cite{CS}, where 3-dimensional ancient solutions to Ricci flow on homogeneous spaces are studied.

\section{Preliminaries}
\subsection{ }  First we recall several different ways of parametrizing  the sphere that are needed in the sequel. Letting $\Sph^p\subset \R^{p+1}$ and $\Sph^q \subset \R^{q+1}$, we construct $(0, \frac{\pi}{2})\times \Sph^p \times \Sph^q \to \Sph^{p+q+1}$ as
$$
(\theta, x, y)\to (x\cos \theta, y \sin \theta) \in \R^{p+q+2}.
$$
This map is not onto. But only two end spheres (corresponding to $\theta =0$ and $\theta =\frac{\pi}{2}$) need  to be added to have the whole $\Sph^{p+q+1}$.
Under this representation, the standard metric on $\Sph^{p+q+1}$ can be written as
\begin{equation}\label{warp1}
\de s^2 =\de \theta^2 +\cos^2 \theta \de\sigma_2^2 +\sin^2 \theta \de\sigma_1^2
\end{equation}
where $\de\sigma_2^2$ and $\de\sigma_1^2$ are the standard metrics on $\Sph^{p}$ and $\Sph^q$, respectively. Since this coordinate becomes singular as $\theta \to 0$ and $\theta \to \frac{\pi}{2}$, and we shall make use of the doubly warped product to construct a family of metrics on spheres, we need a result to ensure that the metric originally defined on
$(0, \frac{\pi}{2})\times \Sph^p \times \Sph^q$ can be extended smoothly to $\Sph^{p+q+1}$.

Let $(M^p, \de\sigma_2^2)$ be a compact Einstein manifold and $(\Sph^q, d\sigma^2_1)$ be the standard metric on the sphere. Consider the doubly warped product metric
\begin{equation}\label{BB1}
\de s^2 =\de \theta^2 +f^2(\theta) \de \sigma_1^2 +g^2(\theta)\de \sigma^2_2.
\end{equation}
The following proposition gives the condition to close up the metric at the `end'.
\begin{proposition}[B\'erard-Bergery]\label{reg1} Assume that $f$ and $g$ are smooth positive functions defined on $(0, \theta_0)$. Then, $ds^2$ extends to give a smooth positive definite metric in a
neighborhood of $\theta=0$ if and only if

(1) $f(\theta)$ extends smoothly to an odd function of $t$ with $f'(0)=1$;

(2) $g(\theta)$ extends smoothly to a strictly positive, even function of $\theta$.
\end{proposition}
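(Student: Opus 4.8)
The plan is to localize near the singular fibre $\theta=0$ and to recognize the piece of the metric carrying the collapsing factor as a warped product over the \emph{round} sphere, which is nothing but the polar-coordinate form of a Euclidean-type metric on a ball. As $\theta\to 0$ the factor $\Sph^q$ (warped by $f$) must collapse, so the $\theta=0$ locus is a copy of $M^p$ carrying the metric $g^2(0)\de\sigma_2^2$, and closing up amounts to gluing in a disk $B^{q+1}$ in place of the collapsing $\Sph^q$. Thus a neighborhood of the end is modeled on $B^{q+1}_\e\times M^p$. Since the metric \eqref{BB1} has no cross terms between the $(\theta,\Sph^q)$ directions and the $M^p$ directions, the extension problem splits into two independent pieces: the warped cone $\de\theta^2+f^2(\theta)\de\sigma_1^2$ over $\Sph^q$, and the radially-scaled spectator factor $g^2(\theta)\de\sigma_2^2$ on $M^p$.

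First I would introduce Cartesian coordinates $x=(x^1,\dots,x^{q+1})$ on $\R^{q+1}$ with $r=|x|=\theta$ and $\Sph^q$ the unit sphere, so that $\sum_i(\de x^i)^2=\de r^2+r^2\de\sigma_1^2$. Substituting $\de\sigma_1^2=r^{-2}\lf(\sum_i(\de x^i)^2-\de r^2\ri)$ and $\de r^2=r^{-2}x_ix_j\,\de x^i\de x^j$ into the $(\theta,\Sph^q)$-part yields, in these coordinates,
\[
\de\theta^2+f^2(\theta)\de\sigma_1^2=\frac{f^2(r)}{r^2}\sum_i(\de x^i)^2+\frac{1}{r^2}\lf(1-\frac{f^2(r)}{r^2}\ri)x_ix_j\,\de x^i\de x^j .
\]
Smoothness of this tensor at the origin is then governed by the elementary Whitney-type fact that a function $h(r)$ of $r=|x|$ extends to a smooth function on $\R^{q+1}$ if and only if $h$ extends to a smooth even function of $r$, equivalently $h(r)=H(r^2)$ with $H$ smooth. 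Here the hypothesis that $\de\sigma_1^2$ is the standard round metric is exactly what is used, as only the round sphere has a metric cone isometric to flat $\R^{q+1}$.

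Applying this, the extension is smooth and positive definite precisely when $f^2(r)/r^2$ and $r^{-2}\bigl(1-f^2(r)/r^2\bigr)$ are both smooth even functions of $r$. The first is automatic once $f$ is smooth and odd; the second is the crucial constraint, since it forces $f^2(r)/r^2=1+O(r^2)$, i.e. $f'(0)=1$ (choosing $f>0$). Conversely, writing $f^2/r^2=H_1(r^2)$ and $1-f^2/r^2=r^2H_2(r^2)$ gives $f=r\sqrt{1+r^2H_2(r^2)}$, which is $r$ times a smooth positive even function and hence a smooth odd function with $f'(0)=1$; this is condition (1). For the spectator factor, $g^2(\theta)\de\sigma_2^2$ extends smoothly and stays positive definite exactly when $g^2(r)$ is a smooth even function of $r$ with $g^2(0)>0$, i.e. $g$ is a smooth, strictly positive, even function, which is condition (2). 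Note that the Einstein hypothesis on $(M^p,\de\sigma_2^2)$ plays no role in this extension statement; only its being a fixed smooth Riemannian metric is used.

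The main obstacle will be the careful two-sided verification of the Whitney-type smoothness criterion, in particular the ``only if'' direction: extracting from mere smoothness and positive definiteness of the extended metric tensor that the radial profiles are even and that $f'(0)=1$. The geometrically essential point is that any value $f'(0)\neq 1$ introduces a cone-angle defect along $M^p$ that no smooth chart can remove, which is detected analytically by the blow-up of the coefficient $r^{-2}\bigl(1-f^2/r^2\bigr)$. Once that elementary lemma is in place, the remainder is bookkeeping: treat the $f$-cone part and the $g$-spectator part separately, and combine them using the absence of cross terms, so that the full metric extends smoothly and positive definitely if and only if each factor does.
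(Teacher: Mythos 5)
The paper does not actually prove this proposition: it is quoted from B\'erard-Bergery and used as a black box (the text only remarks that one can verify it for the special case of the round doubly warped metric). So there is no proof in the paper to compare against, and your proposal has to be judged on its own. It is correct, and it is the standard argument: split off the $M^p$ block using the absence of cross terms, convert the $(\theta,\Sph^q)$ block to Cartesian coordinates on $\R^{q+1}$ via $\de\theta^2+f^2\de\sigma_1^2=\frac{f^2}{r^2}\sum_i(\de x^i)^2+\frac{1}{r^2}\bigl(1-\frac{f^2}{r^2}\bigr)x_ix_j\,\de x^i\de x^j$ (your formula checks out), and invoke the Whitney lemma that a radial function is smooth on $\R^{q+1}$ iff it is a smooth function of $r^2$. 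The sufficiency direction is then immediate, and your identification of the necessity mechanism is the right one: taking the trace of the extended tensor shows $f^2/r^2$ is smooth radial, hence even in $r$, and then the term $\frac{1}{r^2}\bigl(1-\frac{f^2}{r^2}\bigr)x_ix_j$ forces $f^2/r^2\to 1$ at the origin, since otherwise the coefficient of $\de x^i\de x^j$ contains a multiple of $x_ix_j/r^2$, which has direction-dependent limits and so is not even continuous; with $f>0$ this gives $f'(0)=1$, and the remaining evenness/oddness claims follow from Whitney applied to $f^2/r^2$ and $g^2$. Two cosmetic points: there is a sign slip in $f=r\sqrt{1+r^2H_2(r^2)}$ (it should be $1-r^2H_2(r^2)$ with your sign convention for $H_2$, which is immaterial), and your observation that the Einstein hypothesis on $(M^p,\de\sigma_2^2)$ is irrelevant here is accurate --- it is only used later for the Ricci flow reduction.
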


Verifying this for the special case (\ref{warp1}),
confirms that $\de s^2$  can be extended to a smooth metric on $\Sph^{p+q+1}$ as $\theta \to 0$ and $\theta\to \frac{\pi}{2}$.

\subsection{} The sphere can also be viewed as the total space of a principle bundle. The prime example is the Hopf fibration $\mathsf{S}^1\to \Sph^{2m+1}\to \CP^m$, which is given by $(z_1, z_2, \cdots z_{m+1}) \in \C^{m+1}$ with $\sum |z_i|^2=1$ to the line $[z_1, \cdots, z_{m+1}]\in \CP^m$ and its generalization $\Sph^3 \to \Sph^{4m+3}\to \HP^m$, which is given similarly using the quaternions. It is convenient to set up the following general formulation. Let $\pi: ~P \longrightarrow M$ be a principle $G$-bundle over a
 Riemannian manifold $(M,g)$ with $\operatorname{dim} M = n$, $\operatorname{dim}  G =p$. Let
 $\sigma \in \Omega^1( P) \otimes \frak{g}$ be a connection on $P$
 and $\langle \cdot,\cdot\rangle_{\frak{g}}$ be a bi-invariant metric on $\frak{g}$. For any $a, b>0$, we define a metric
 $\widetilde{g}_{a,b}$ on the total space $P$ as follows:
 \[
 \widetilde{g}_{a, b} =a\, \langle \sigma(\cdot), \sigma(\cdot) \rangle_{\frak{g}}+ b\,  \pi^* g .
 \]
More precisely, for any $o\in P$ and any vectors $X, Y\in T_oP$, 
\[
\widetilde g(X, Y)=a\, \langle\sigma(X), \sigma(Y)\rangle_{\frak{g}}+b\, (\pi^* g)(X, Y).
\]
We need to  compute the curvature of this metric on the principle bundle. Let $1 \leq i,j,k \leq n, ~n+1 \leq \alpha, \beta, \gamma \leq n+p,~1
 \leq A, B, C \leq n+p$.  Let $\{ \sigma_1, \cdots, \sigma_n \}$ be an orthonormal frame of
 $\Omega^1(M)$. Then, Cartan's structure equations are given by
 \beqn \de \sigma_i &=& - \sigma_{ij} \wedge \sigma_j, \CR \de
 \sigma_{ij} + \sigma_{ik} \wedge \sigma_{kj} &=& \frac{1}{2}
 R_{ijkl} ~ \sigma_k \wedge \sigma_l. \eeqn Let $\{ X_{n+1}, \cdots,
 X_{n+p} \}$ be an orthonormal base of $\frak{g}$. The structure constants $C_{\alpha \beta}^{\gamma}$ are defined via
 \[
 [ X_{\alpha}, X_{\beta} ] = C_{\alpha \beta}^{\gamma} X_{\gamma}.
 \]
 Since the metric $\langle \cdot,\cdot\rangle_{\frak{g}}$ is biinvariant, $ C_{\alpha
 \beta}^{\gamma}$ are skew symmetric in every pair of indices. Define the orthonormal $1$-form $\sigma_\alpha$ by
 \[
 \sigma = \sigma_{\alpha} \cdot X_{\alpha}.
 \]
 Then
 \[
 \{ \pi^* \sigma_1, \cdots, \pi^* \sigma_n, \sigma_{n+1}, \cdots,
 \sigma_{n+p} \}
 \]
 is an orthonormal frame of $\Omega^1(P)$ for $\widetilde{g}_{1,1}$. For the sake of convenience,
  we also denote $ \pi^* \sigma_i$ by $\sigma_i$. The
 curvature of the connection $\sigma$ is defined as
 \[
 \Omega = \de \sigma + \frac{1}{2} [ \sigma, \sigma].
 \]
 If we write $ \Omega = \Omega^{\alpha} \cdot X_{\alpha}$, and
$$ \Omega^{\alpha} = \frac{1}{2} F_{ij}^{\alpha} \sigma_i \wedge
 \sigma_j ,
 $$
 then, we have
 \[
 \de \sigma_{\alpha} +  \frac{1}{2} C_{\beta \gamma}^{\alpha}
 \sigma_{\beta} \wedge \sigma_{\gamma} = \frac{1}{2} F^{\alpha}_{ij}
 \sigma_i \wedge \sigma_j.
 \]
The first covariant derivative of $F_{ij}^{\alpha}$ is defined by
 \[
 \sum_k F_{ij, k}^{\alpha} \sigma_k = \de F_{ij}^{\alpha} -
 F_{kj}^{\alpha} \sigma_{ki} - F_{ik}^{\alpha} \sigma_{kj}  -
 F_{ij}^{\beta} C^{\alpha}_{\beta \gamma } \sigma_{\gamma}
 \]
and the second  Bianchi identity asserts that
 \[
 F_{ij, k}^{\alpha} + F_{jk,i}^{\alpha} + F_{ki,j}^{\alpha} =0.
 \]
For general $\widetilde{g}_{a, b}$, let
$\widetilde{\sigma}_{\alpha} = \sqrt{a} \sigma_{\alpha}, ~\widetilde{\sigma}_i = \sqrt{b} \sigma_i$.
Then $\{ \widetilde{\sigma}_{\alpha}, \widetilde{\sigma}_i  \}$ is an orthonormal frame of $T^*P$.
 The Levi-Civita connection $1$-form $\widetilde{\sigma}_{AB}$ of $\widetilde{g}_{a, b}$ is uniquely determined
 by
 \beqn
 & & \widetilde{\sigma}_{AB} = - \widetilde{\sigma}_{BA}, \CR
 & & \de \widetilde{\sigma}_A = -\widetilde{\sigma}_{AB} \wedge
 \widetilde{\sigma}_B.
 \eeqn
 Therefore,
 \beqn
 \widetilde{\sigma}_{\alpha \beta} &=& - \frac{1}{2} C_{\beta \gamma}^{\alpha} \sigma_{\gamma}, \CR
 \widetilde{\sigma}_{\alpha i} &=& \frac{1}{2} \left( \frac{a}{b} \right)^{\frac{1}{2}} F_{ij}^{\alpha} \sigma_j,  \CR
 \widetilde{\sigma}_{ij} &=& \sigma_{ij} - \frac{a}{2b} F_{ij}^{\alpha} \sigma_{\alpha}.
 \eeqn

 The Riemannian curvatures of $\widetilde{g}_{a,b}$ can be computed using  the equation
 \[
 \de \widetilde{\sigma}_{AB} + \widetilde{\sigma}_{AC} \wedge
 \widetilde{\sigma}_{CB} = \frac{1}{2} \widetilde{R}_{ABCD} \widetilde{\sigma}_C \wedge
 \widetilde{\sigma}_D.
 \]
 Therefore,
 \beqn
\frac{1}{2} \widetilde{R}_{\alpha \beta
 ab} \widetilde{\sigma}_A \wedge \widetilde{\sigma}_B &=&
 \frac{1}{8a} C_{\alpha \beta}^{\gamma}
 C_{\delta \eta}^{\gamma}  \widetilde{\sigma}_{\delta} \wedge \widetilde{\sigma}_{\eta} - \frac{1}{4} \left( \frac{a}{b^2} F_{ik}^{\alpha} F_{jk}^{\beta} + \frac{1}{b}
 F_{ij}^{\gamma} C_{\alpha \beta}^{\gamma} \right)\widetilde{\sigma}_i \wedge \widetilde{\sigma}_j ,\CR
 \frac{1}{2} \widetilde{R}_{i \alpha AB} \widetilde{\sigma}_A \wedge \widetilde{\sigma}_B &=&
 \frac{1}{4} \left(
 \frac{a}{b^2} F_{ik}^{\beta} F_{jk}^{\alpha} - \frac{1}{b} F^{\gamma}_{ij} C_{\alpha \beta}^{\gamma}  \right) \widetilde{\sigma}_j \wedge \widetilde{\sigma}_{\beta}
 + \frac{1}{2} \left( \frac{a}{b^3} \right)^{\frac{1}{2}} F^{\alpha}_{ij,k} \widetilde{\sigma}_j \wedge \widetilde{\sigma}_k, \CR
\frac{1}{2} \widetilde{R}_{ijAB} \widetilde{\sigma}_a \wedge
\widetilde{\sigma}_b
 &=& \frac{1}{2b} R_{ijkl} \widetilde{\sigma}_k \wedge
\widetilde{\sigma}_l - \frac{a}{4b^2} \left( F_{ij}^{\alpha} F_{kl}^{\alpha} +
 F_{ik}^{\alpha}F_{jl}^{\alpha} \right) \widetilde{\sigma}_k \wedge
\widetilde{\sigma}_l  \CR & & -\frac{1}{2} \left( \frac{a}{b^3}
\right)^{\frac{1}{2}} F^{\alpha}_{ij,k} \widetilde{\sigma}_k \wedge
\widetilde{\sigma}_{\alpha} -\frac{1}{4} \left( \frac{a}{b^2} F_{ik}^{\alpha}
F_{jk}^{\beta} + \frac{1}{b}
 F_{ij}^{\gamma} C_{\alpha \beta}^{\gamma} \right)\widetilde{\sigma}_{\alpha} \wedge
 \widetilde{\sigma}_{\beta}.
 \eeqn
Hence, the Riemannian curvature of $\widetilde{g}_{a, b}$ is given in components by
 \begin{eqnarray}
 \widetilde{R}_{\alpha \beta \delta \eta } &=& \frac{1}{4a}  C^{\gamma}_{\alpha \beta} C_{\delta \eta}^{\gamma}, \nonumber\\
 \widetilde{R}_{\alpha \beta  \gamma i} &=& 0, \nonumber\\
 \widetilde{R}_{i\alpha j \beta} &=& - \frac{1}{4b} F^{\gamma}_{ij} C_{\alpha \beta}^{\gamma} +
 \frac{a}{4b^2} F_{ik}^{\beta} F_{jk}^{\alpha}, \nonumber\\
 \widetilde{R}_{ij \alpha \beta} &=& - \frac{1}{2b} F^{\gamma}_{ij} C_{\alpha \beta}^{\gamma} -
 \frac{a}{4b^2} \left(
 F_{ik}^{\alpha} F_{jk}^{\beta} - F_{ik}^{\beta}F_{jk}^{\alpha} \right), \label{principle-curvature}\\
 \widetilde{R}_{ijk \alpha} &=& - \frac{1}{2} \left( \frac{a}{b^3} \right)^{\frac{1}{2}} F^{\alpha}_{ij,k}, \nonumber\\
 \widetilde{R}_{ijkl} &=& \frac{1}{b} R_{ijkl} - \frac{a}{4b^2} \left( 2 F_{ij}^{\alpha} F_{kl}^{\alpha}
 + F_{ik}^{\alpha}F_{jl}^{\alpha} - F_{il}^{\alpha} F_{jk}^{\alpha} \right), \nonumber
 \end{eqnarray}
 and the Ricci curvature of $\widetilde{g}_{a, b}$ takes the form
 \begin{eqnarray}
 \widetilde{R}_{\alpha \beta} &=& \frac{1}{4a} C^{\gamma}_{\alpha \eta} C_{\beta
 \eta}^{\gamma} + \frac{a}{4b^2} F_{ij}^{\alpha} F_{ij}^{\beta} ,\nonumber\\
 \widetilde{R}_{i \alpha} &=& \frac{1}{2} \left( \frac{a}{b^3} \right)^{\frac{1}{2}} F^{\alpha}_{ij,j}, \label{principle-ricci}\\
 \widetilde{R}_{ij} &=& \frac{1}{b} R_{ij} - \frac{a}{2b^2} F_{ik}^{\alpha}
 F_{jk}^{\alpha}.\nonumber
 \end{eqnarray}
 The construction of Einstein metrics via the principle bundle were studied before.  For example it was used  in \cite{J}. Our formulae (\ref{principle-curvature}) and (\ref{principle-ricci}) above follow from the computations in \cite{J} with the appropriate modifications (cf. Proposition 5 of \cite{J}).

\subsection{}
Another formulation  needed for  our examples is  the Riemannian submersion, which also plays important role in the construction of Einstein metrics \cite{Besse}.  Let  $\pi:(P,g) \longrightarrow (M,
 \check{g})$ be a Riemannian submersion with $P, M$ being compact manifolds. Let $\hat{g}$ be the restriction of $g$ to the
 fibre. Let $\{e_{\alpha}, e_i \}$ be an orthonormal frame of tangent vector of $(P, g)$, where $\{e_{\alpha} \}$ is vertical and $\{ e_i
 \}$ is horizontal. Recall that \cite{Besse} the  O'Neill tensor $A$ and $T$ are defined by
 \begin{eqnarray*}
 & & A_{e_i} e_j = (\nabla_{e_i} e_j)^{\bot}, \quad \quad
 A_{e_i} e_{\alpha} = (\nabla_{e_i} e_{\alpha})^{\top}, \quad \quad
  A_{e_{\alpha}} e_i = 0, \quad \quad
  A_{e_{\alpha}} e_{\beta} = 0, \\
 & & T_{e_{\alpha}} e_i = (\nabla_{e_{\alpha}} e_i)^{\bot}, \quad \quad
  T_{e_{\alpha}} e_{\beta} = (\nabla_{e_{\alpha}} e_{\beta})^{\top}, \quad\quad
  T_{e_i} e_j = 0, \quad \quad
  T_{e_i} e_{\alpha} = 0,
 \end{eqnarray*}
 where $\bot$ and $\top$ represent the vertical and horizontal components respectively. The fiber is totally geodesic if and only if $T=0$. In this article,
 we only consider the Riemannian submersion with totally geodesic fibre. It is then known that all the fibers are isometric \cite{Besse}. O'Neill's
 formulae (see for example, Chapter 9, Section D of \cite{Besse}) gives the Riemannian curvature tensor of $g$ in terms of the curvature tensor of the base manifold, the curvature of the fiber and  the O'Neill's operator $A$  via the formulae below
 \begin{eqnarray*}
 & & R_{\alpha \beta \gamma \delta} = \hat{R}_{\alpha \beta \gamma \delta}, \quad
 \quad  R_{\alpha \beta \gamma i} = 0, \\
 & & R_{i \alpha j \beta} = \langle(\nabla_{e_{\alpha}} A)_{e_i} e_j , e_{\beta}\rangle+\langle A_{e_i} e_{\alpha}, A_{e_j} e_{\beta}\rangle, \CR
 & & R_{ij \alpha \beta} =\langle(\nabla_{e_{\alpha}} A)_{e_i} e_j , e_{\beta}\rangle - \langle(\nabla_{e_{\beta}} A)_{e_i} e_j , e_{\alpha}\rangle
 +\langle A_{e_i} e_{\alpha}, A_{e_j} e_{\beta}\rangle-\langle A_{e_i} e_{\beta}, A_{e_j} e_{\alpha}\rangle, \\
 & & R_{ijk \alpha}=  \langle(\nabla_{e_k} A)_{e_i} e_j, e_{\alpha}\rangle, \\
 & & R_{ijkl} = \check{R}_{ijkl} - 2\langle A_{e_i} e_j, A_{e_k} e_l\rangle -\langle A_{e_i} e_k, A_{e_j} e_l\rangle
 +\langle A_{e_i} e_l, A_{e_j} e_k\rangle ,
 \end{eqnarray*}
 whereas the Ricci curvature has the expression
 \beqn
 & & R_{\alpha \beta} = \hat{R}_{\alpha \beta} + \sum_i \langle A_{e_i} e_{\alpha}, A_{e_i} e_{\beta}\rangle, \CR
 & & R_{i \alpha} = \sum_j \langle (\nabla_{e_j} A)_{e_j} e_i , e_{\alpha}\rangle, \CR
 & & R_{ij} = \check{R}_{ij}- 2 \sum _k \langle  A_{e_i} e_k ,  A_{e_j} e_k\rangle.
 \eeqn
 Here $\hat{R}$ and $\check{R}$ are the curvature tensors of the fiber and the base, respectively.

 Now consider the variation $\widetilde{g}_{a, b} = a \hat{g} + b \check{g}$. Let
 $\widetilde{e}_{\alpha} = \frac{1}{\sqrt{a}} e_{\alpha}, ~\widetilde{e}_i = \frac{1}{\sqrt{b}}
 e_i.$
 Then, $\{\tilde{e}_{\alpha}, \tilde{e}_i \}$ is an orthonomal basis of $(P,
 \widetilde{g}_{a, b})$. Modifying the computations in Section G, Chapter 9 of \cite{Besse}, we  obtain the Riemannian curvature tensor of $\widetilde{g}_{a,b}$:
 \begin{eqnarray}
 & & \widetilde{R}_{\alpha \beta \gamma \delta} = \frac{1}{a} \hat{R}_{\alpha \beta \gamma \delta},
 \quad \quad  \quad \quad \quad \quad\widetilde{R}_{\alpha \beta \gamma i} = 0, \nonumber \CR
 & & \widetilde{R}_{i \alpha j \beta} =  \frac{1}{b} \langle(\nabla_{e_{\alpha}} A)_{e_i} e_j , e_{\beta}\rangle +\frac{1}{b}\langle A_{e_i} e_{\alpha}, A_{e_j} e_{\beta}\rangle
 - \frac{1}{b} \left( 1- \frac{a}{b} \right) \langle A_{e_i} e_{\beta}, A_{e_j} e_{\alpha}\rangle, \nonumber \\
 & & \widetilde{R}_{ij \alpha \beta} = \frac{1}{b} \langle (\nabla_{e_{\alpha}} A)_{e_i} e_j , e_{\beta}\rangle  - \frac{1}{b}\langle (\nabla_{e_{\beta}} A)_{e_i} e_j , e_{\alpha}\rangle +\frac{1}{b}\left( 2- \frac{a}{b} \right) \langle A_{e_i} e_{\alpha}, A_{e_j} e_{\beta}\rangle\nonumber \\
 & & \quad \quad \quad \quad- \frac{1}{b} \left( 2- \frac{a}{b} \right) \langle A_{e_i} e_{\beta}, A_{e_j} e_{\alpha}\rangle, \label{sm-curvature}\\
 & & \widetilde{R}_{ijk \alpha} = \left( \frac{a}{b^3} \right)^{\frac{1}{2}} \langle (\nabla_{e_k} A)_{e_i} e_j, e_{\alpha}\rangle,\nonumber  \\
 & & \widetilde{R}_{ijkl}= \frac{1}{b} \check{R}_{ijkl} - 2 \frac{a}{b^2} \langle A_{e_i} e_j, A_{e_k} e_l\rangle  - \frac{a}{b^2} \langle A_{e_i} e_k, A_{e_j} e_l\rangle + \frac{a}{b^2} \langle A_{e_i} e_l, A_{e_j} e_k\rangle. \nonumber
 \end{eqnarray}
 The Ricci curvature of $\widetilde{g}_{a, b}$ is given by
 \begin{eqnarray}
 & & \tilde{R}_{\alpha \beta} = \frac{1}{a} \hat{R}_{\alpha \beta} + \frac{a}{b^2} \sum_i \langle A_{e_i} e_{\alpha}, A_{e_i} e_{\beta}\rangle, \nonumber \\
 & & \tilde{R}_{i \alpha} = \left( \frac{a}{b^3} \right)^{\frac{1}{2}} \sum_j \langle (\nabla_{e_j} A)_{e_j} e_i , e_{\alpha}\rangle,  \label{sm-ricci} \\
 & & \tilde{R}_{ij} = \frac{1}{b} \check{R}_{ij}- 2 \frac{a}{b^2} \sum _k \langle  A_{e_i} e_k ,  A_{e_j}
 e_k\rangle.\nonumber
 \end{eqnarray}
 Hence the Riemannian curvature tensor and the Ricci curvature of $\widetilde{g}_{a, b}$ and $g$ are related by
 \begin{eqnarray}
 & & \widetilde{R}_{\alpha \beta \gamma \delta} = \frac{1}{a} R_{\alpha \beta \gamma \delta}, \quad \quad
  \widetilde{R}_{\alpha \beta \gamma i} = 0, \nonumber\\
 & & \widetilde{R}_{i \alpha j \beta} =  \frac{1}{b} R_{i \alpha j \beta} - \frac{1}{b} \left( 1- \frac{a}{b} \right) \langle A_{e_i} e_{\beta}, A_{e_j} e_{\alpha}\rangle, \CR
 & & \widetilde{R}_{ij \alpha \beta} = \frac{1}{b} R_{ij \alpha \beta} +\frac{1}{b} \left( 1- \frac{a}{b} \right) \langle A_{e_i} e_{\alpha}, A_{e_j} e_{\beta}\rangle
 - \frac{1}{b} \left( 1- \frac{a}{b} \right) \langle A_{e_i} e_{\beta}, A_{e_j} e_{\alpha}\rangle, \label{sb-curvatue-2}\\
 & &\widetilde{R}_{ijk \alpha} = \left( \frac{a}{b^3} \right)^{\frac{1}{2}} R_{ijk \alpha}, \CR
 & & \widetilde{R}_{ijkl}= \frac{a}{b^2} R_{ijkl} + \frac{1}{b} \left( 1- \frac{a}{b} \right)
 \check{R}_{ijkl},\nonumber
 \end{eqnarray}
 and
 \begin{eqnarray}
 & & \tilde{R}_{\alpha \beta} =  \frac{a}{b^2} R_{\alpha \beta}  + \frac{1}{a} \left( 1- \frac{a^2}{b^2} \right) \hat{R}_{\alpha \beta}, \CR
 & & \tilde{R}_{i \alpha} = \left( \frac{a}{b^3} \right)^{\frac{1}{2}} R_{i \alpha},\label{sb-ricci-2}\\
 & & \tilde{R}_{ij} = \frac{a}{b^2} R_{ij} + \frac{1}{b} \left( 1- \frac{a}{b}
 \right)\check{R}_{ij}. \nonumber
 \end{eqnarray}
Subsequently, in our  constructions we shall mainly use formulae (\ref{sb-curvatue-2}) and (\ref{sb-ricci-2}) above.
This consideration will be mainly used for the generalized Hopf fibrations $\Sph^2\to \CP^{2m+1}\to \HP^m$ and $\Sph^7\to \Sph^{15}\to \Sph^8$.

\section{Fateev's 3-dimensional sausage }

 We shall give a detailed presentation of  Fateev's examples of ancient solutions on $\Sph^3$ \cite{Fa96} and discuss their properties. First we start with a parametrization of $\Sph^3$. Write the standard sphere as $|z_1|^2+|z_2|^2=1$ in $\C^2$. Let $z_1=x_1+\sqrt{-1}x_2$ and $z_2=x_3+\sqrt{-1}x_4$. Introduce the parameters $\theta \in [0, \frac{\pi}{2}]$, $\chi_1$ and $\chi_2 \in [0, 2\pi]$ as follows: For any point $(z_1, z_2)$ on $\Sph^3$, there exists a unique $\theta \in [0, \frac{\pi}{2}]$ such that
$$
|z_1|^2=\cos^2 \theta, \quad \quad |z_2|^2=\sin^2 \theta.
$$
For $\theta \in (0, \frac{\pi}{2})$ the level set of $\theta$ is a torus, whereas for $\theta=0$ (or $\frac{\pi}{2}$) is a circle.  The generic level set of $\theta$ can be parametrized by
$\chi_1$ and $\chi_2$ via
\begin{equation}\label{eq-1}
z_1=|z_1|\exp(\sqrt{-1}\chi_1), \quad \quad z_2=|z_2|\exp(\sqrt{-1} \chi_2).\end{equation}
Every point on the sphere except the two circles $$F_1\doteqdot \{ (z_1, 0)\, |\, |z_1|^2=1\}\quad \mbox{ and} \quad  F_2\doteqdot \{(0, z_2)\, |\, |z_2|^2=1\}$$ can be parametrized uniquely by $\theta$ and $\chi_1, \chi_2 \in \R/2\pi \Z$.
With this parametrization in mind, the standard metric of $\Sph^3$ has the form of a doubly-warped product
$$
\de s^2_{\operatorname{stan}}= \de\theta^2 +\cos^2\theta \de\chi_1^2 +\sin ^2\theta \de\chi_2^2.
$$
This can be easily seen from
\begin{eqnarray*}
\de s^2_{\operatorname{stan}}&=&\sum_{i=1}^4 \de x_i^2\\
&=&\frac{1}{2}\left(\de z_1\otimes \de\overline{z}_1+\de\overline{z}_1\otimes \de z_1+\de z_2\otimes \de\overline{z}_2+\de\overline{z}_2\otimes \de z_2\right),\\
\de z_1&=&-\sin \theta e^{\sqrt{-1}\chi_1} \de\theta +\sqrt{-1} \cos \theta e^{\sqrt{-1}\chi_1}\de \chi_1,\\
\de z_2 &=& \cos \theta e^{\sqrt{-1}\chi_2}\de \theta +\sqrt{-1}\sin \theta e^{\sqrt{-1}\chi_2}\de\chi_2.
\end{eqnarray*}
From (\ref{eq-1}) it is also easy to see that $e^{2\sqrt{-1}\chi_1}=\frac{z_1}{\bar{z}_1}$ and $e^{2\sqrt{-1}\chi_2}=\frac{z_2}{\bar{z}_2}$. Differentiating them gives the relations
\begin{equation}\label{eq-2}
\de\chi_1=\frac{x_1\de x_2-x_2\de x_1}{x_1^2+x_2^2},\quad \quad \de\chi_2=\frac{x_3\de x_4-x_4\de x_3}{x_3^2+x_4^2}.
\end{equation}
For convenience in the presentation, we denote  $\phi_1\doteqdot x_1\de x_2-x_2\de x_1, \phi_2\doteqdot x_3\de x_4-x_4\de x_3$.
Fateev's ancient solution is a family of metrics $g(t)$ defined  on $(-\infty, 0)\times \Sph^3$ which solves the Ricci  flow equation
\begin{equation}\label{rf1}
\frac{\partial g}{\partial \tau} =\frac{1}{2}\Ric(g) ,
\end{equation}
where $\tau=-t$, which is now defined on $(0, \infty)$. (We use this nonstandard normalization for convenience.)
 The solution has an ansatz of the following form
\begin{equation}\label{ansatz}
\de s^2_{\nu, k}(\tau)=\frac{1}{w(\tau, \theta)}\left(u(\tau)\de s^2_{\mbox{stan}}+2 d(\tau)(\phi_1^2 +\phi_2^2)+4c(\tau)\phi_1 \phi_2\right) ,
\end{equation}
where
\begin{equation}\label{space}
w(\tau, \theta)=a^2(\tau)-b^2(\tau) (x_1^2+x_2^2-x_3^2 -x_4^2)^2=a^2(\tau)-b^2(\tau)\cos ^2 2\theta
 \end{equation}
 and $a, b, c, d, u$ are functions of $\tau$,
which are given by the formulae
\begin{eqnarray}
a(\tau)&=&\lambda \frac{\sqrt{\cosh^2\xi -k^2\sinh^2 \xi}+1}{\sinh \xi},\nonumber \\
b(\tau)&=&\lambda \frac{\sqrt{\cosh^2\xi -k^2\sinh^2 \xi}-1}{\sinh \xi},\nonumber \\
c(\tau)&=& -\lambda k \tanh \xi, \label{formulae}\\
d(\tau) &=& \lambda \frac{\sqrt{1-k^2\tanh^2 \xi}-\cosh \xi}{\sinh \xi},\nonumber \\
u(\tau)&=& 2\lambda \coth \xi  , \nonumber
\end{eqnarray}
where $\lambda =\frac{\nu}{2(1-k^2)}>0$ and $\nu$ and $k$ are two parameters with $\nu > 0$ and $k^2 < 1$. The new variable
$\xi$  is related to $\tau$ via the equation:
\begin{equation}\label{eq-xitau}
\nu \tau = \xi -\frac{k}{2}\log \left(\frac{1+k\tanh \xi}{1-k \tanh \xi}\right).
\end{equation}

\begin{theorem}[Fateev] The metrics described through the equations (\ref{ansatz}), (\ref{space}) and (\ref{formulae}) are  smooth  ancient solutions
 to the Ricci flow equation (\ref{rf1}).
\end{theorem}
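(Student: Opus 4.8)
The plan is to reduce the Ricci flow equation (\ref{rf1}) for this $T^2$-symmetric ansatz to a closed system of ordinary differential equations in $\tau$, verify that (\ref{formulae}) solves it, and then separately establish smoothness on $\Sph^3$ and the ancient time interval. First I would rewrite the metric in the coordinates $(\theta,\chi_1,\chi_2)$. By (\ref{eq-2}) and the definitions of $\phi_1,\phi_2$ one has $\phi_1=\cos^2\theta\,\de\chi_1$ and $\phi_2=\sin^2\theta\,\de\chi_2$, so that $w\cdot\de s^2_{\nu,k}$ becomes the explicit $\theta$-dependent quadratic form
\begin{equation*}
u\,\de\theta^2+(u\cos^2\theta+2d\cos^4\theta)\,\de\chi_1^2+(u\sin^2\theta+2d\sin^4\theta)\,\de\chi_2^2+4c\cos^2\theta\sin^2\theta\,\de\chi_1\,\de\chi_2 .
\end{equation*}
This is a cohomogeneity-one metric for the torus action $(\chi_1,\chi_2)\mapsto(\chi_1+s,\chi_2+t)$, with $\theta$ the orbit-space coordinate. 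I would then compute $\Ric(g)$, for general functions $a,b,c,d,u$ of $\tau$, by the method of moving frames (or the standard cohomogeneity-one curvature formulas): introduce a coframe diagonalizing the $\chi$-block, read the Levi-Civita connection $1$-forms from Cartan's first structure equation, and extract $\Ric$ from the curvature $2$-forms.

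The key structural point is that both sides of (\ref{rf1}) are, for metrics in the ansatz, symmetric $2$-tensors whose $\theta$-dependence is of the same type: rational expressions in $\cos 2\theta$ over powers of $w=a^2-b^2\cos^2 2\theta$, paired against the tensorial components $\de\theta^2$, $\phi_i^2$ and $\phi_1\phi_2$. Matching the coefficients of these three components, and of each power of $\cos 2\theta$, collapses the PDE to a closed ODE system for $(a,b,c,d,u)$; this is the step where one must also check that the ansatz is flow-invariant, i.e.\ that $\Ric(g)$ produces no tensorial components outside the span of $\de\theta^2,\phi_i^2,\phi_1\phi_2$. To verify that (\ref{formulae}) solves the resulting system I would differentiate (\ref{eq-xitau}): a short computation gives
\begin{equation*}
\frac{\de\xi}{\de\tau}=\frac{\nu\,(1-k^2\tanh^2\xi)}{1-k^2},
\end{equation*}
after which the $\tau$-derivatives of $a,b,c,d,u$ follow from (\ref{formulae}) by the chain rule and are substituted into the ODE system.

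For smoothness I would argue that $g$ is manifestly a smooth tensor: $\de s^2_{\mbox{stan}}$, $\phi_1^2,\phi_2^2,\phi_1\phi_2$ are smooth on $\Sph^3$, while $w=a^2-b^2(x_1^2+x_2^2-x_3^2-x_4^2)^2$ is smooth and, since $a-b=2\lambda/\sinh\xi>0$ and $a+b>0$, strictly positive. Positive-definiteness on $0<\theta<\frac{\pi}{2}$ follows, given $u>0$, from positivity of the determinant (and trace) of the $\chi$-block, the determinant being $\cos^2\theta\sin^2\theta\,[u^2+2du+(d^2-c^2)\sin^2 2\theta]$; the bracket is affine in $\sin^2 2\theta\in[0,1]$, hence positive once it is positive at the two endpoints, where it equals $u^2+2du=4\lambda^2 S\cosh\xi/\sinh^2\xi$ and $(u+d)^2-c^2$, with $S=\sqrt{1-k^2\tanh^2\xi}$, both positive by (\ref{formulae}). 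On the circles $F_1,F_2$ one of the $\phi_i$ vanishes and the surviving form contributes the eigenvalue $u+2d=2\lambda S/\sinh\xi>0$, so $g$ remains positive-definite; this is the concrete content of B\'erard-Bergery's criterion (Proposition \ref{reg1}) for closing up the collapsing circle. Finally, for the ancient interval I would note $\de\tau/\de\xi=(1-k^2)/(\nu(1-k^2\tanh^2\xi))>0$ and that (\ref{eq-xitau}) sends $\xi\to 0^+$ to $\tau\to 0^+$ and $\xi\to+\infty$ to $\tau\to+\infty$; thus $\xi\mapsto\tau$ is a diffeomorphism $(0,\infty)\to(0,\infty)$ and the solution exists for all $\tau\in(0,\infty)$, i.e.\ all $t\in(-\infty,0)$.

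The main obstacle is the middle step: carrying out the curvature computation in full and checking that, after matching coefficients, the system is consistent and is solved by (\ref{formulae}). The conformal factor $1/w$ and the genuine off-diagonal term $\phi_1\phi_2$ prevent the metric from being a diagonal warped product, so diagonalizing the $\chi$-block introduces a $\theta$-dependent rotation and the connection and curvature forms are far more involved than in the doubly-warped case; this is precisely the long computation that must be relegated to the appendix.
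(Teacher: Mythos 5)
Your overall strategy --- pass to the coordinates $(\theta,\chi_1,\chi_2)$, compute the Ricci tensor of the cohomogeneity-one ansatz, match coefficients to reduce (\ref{rf1}) to ODEs in $\tau$, and treat smoothness and positivity separately --- is the same as the paper's, and the parts you actually carry out are correct: the identities $\phi_1=\cos^2\theta\,\de\chi_1$, $\phi_2=\sin^2\theta\,\de\chi_2$, the determinant of the $\chi$-block, the positivity of $u(u+2d)$ and of $(u+d)^2-c^2$, the formula $\de\xi/\de\tau=\nu(1-k^2\tanh^2\xi)/(1-k^2)$, and the fact that $\xi\mapsto\tau$ is an increasing diffeomorphism of $(0,\infty)$ onto itself, so the solution is ancient. (The paper proves positive-definiteness by the cruder bound $u>\max\{2|c|,-2d\}$ applied to the ambient tensor on a neighborhood of $\Sph^3\subset\R^4$; since $w\,\de s^2$ is manifestly a smooth tensor there, no appeal to Proposition \ref{reg1} is needed, only positive-definiteness at $F_1,F_2$, which you do check.)

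The genuine gap is the step you defer: you neither exhibit the ODE system nor verify that (\ref{formulae}) solves it, and, more importantly, you do not identify the mechanism that makes the coefficient matching consistent. A priori, matching the coefficients of $\de\theta^2$, $\phi_1^2$, $\phi_2^2$, $\phi_1\phi_2$ and of each power of $\cos 2\theta$ produces far more scalar equations than the five unknowns $a,b,c,d,u$; for instance the single equation $\partial_\tau A=\tfrac12 R_{11}$ already yields the three ODEs (\ref{rf-11-1})--(\ref{rf-11-3}). The paper's appendix resolves this by observing that the functions (\ref{formulae}) satisfy the algebraic \emph{integrability conditions} (\ref{ic}), namely $(u+d)^2=a^2+c^2$ and $d^2=b^2+c^2$, which force $\Delta=BC-D^2=\sin^2 2\theta/(4w)$ and collapse the whole flow to the three equations (\ref{rf-all}): $\de u/\de\tau=-(u+2d)^2$, $\de(uc)/\de\tau=0$, $\de(ab)/\de\tau=0$; all remaining component equations are then shown to be consequences of these, and the explicit formulae follow by separating the resulting equation for $u$. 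Without this observation, the ``consistency check'' you flag as the main obstacle is precisely the unproved core of the theorem, so as written your proposal is a correct plan rather than a proof.
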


We first check that $\de s_t^2$ is indeed a family of smooth metrics on $\Sph^3$. It is easy to see that $a>b>0$, hence $w>0$.  Note that $w$, which is expressed in terms of $x_1, x_2, x_3, x_4$, can also be viewed as a positive smooth function defined in a small
neighborhood of $\Sph^3$. Note that $u>0, ~d<0$, $u+2d>0$ and $u>2|c|$. Observe also that as symmetric tensors,
\begin{eqnarray*}
4c\,  \phi_1 \phi_2 &\le& 2|c|\left(x^2_3 \de x_2^2+x_1^2 \de x_4^2+x_1^2 \de x_3^2+x_4^2 \de x_2^2+x_3^2 \de x_1^2 +x_2^2 \de x_4^2 +x_4^2 \de x_1^2 +x_2^2 \de x_3^2\right),\\
-2d\, (\phi_1^2 +\phi_2^2) &\le& -2d\left( x_1^2 \de x_2^2 +x_1^2 \de x_1^2 +x_2^2 \de x_2^2 +x_2^2 \de x_1^2\right.\\
&\quad& \left. + x_3^2 \de x_4^2 +x_3^2 \de x_3^2 +x_4^2 \de x_4^2 +x_4^2 \de x_3^2\right).
\end{eqnarray*}
It then follows that
\begin{eqnarray*}
u(\tau)\de s^2_{\mbox{stan}}+2d(\tau)(\phi_1^2 +\phi_2^2)+4c(\tau)\phi_1 \phi_2\ge (u-\max\{ 2|c|, -2d\})\de s^2_{\mbox{stan}}>0.
\end{eqnarray*}
Hence $\de s_t^2$ is a family of smooth positive definite $(2,0)$ symmetric tensors,  even in a small neighborhood of $\Sph^3\subset \R^4$. Moreover $w(\tau, x_1, x_2, x_3, x_4) \de s_t^2$ has the form
\begin{eqnarray*}
&\, &(u+2d\, x_2^2) \de x_1^2 +(u+2d\, x_1^2) \de x_2^2 +(u+2d\, x_4^2) \de x_3^2 +(u+2d\, x_3^2) \de x_4^2  \\
&\,& -4d\, x_1 x_2 \de x_1 \de x_2 -4d\, x_3 x_4 \de x_3 \de x_4 -4c x_2 x_3 \de x_1 \de x_4 -4c x_1 x_4 \de x_2 \de x_3\\
&\,& +4c x_1 x_3 \de x_2 \de x_4 +4c x_2 x_4 \de x_1 \de x_3.
\end{eqnarray*}

To compute the curvature tensor of $g(\tau)=\de s^2_t$ and verify that it is indeed a solution to the Ricci flow equation (\ref{rf1}), it suffices to  work with the coordinates $(\theta, \chi_1, \chi_2)\in (0, \frac{\pi}{2})\times [0, 2\pi)\times [0, 2\pi)$, since this coordinate covers $\Sph^3$ except the two focal sub-manifolds $F_i$ ($i=1, 2$) of codimension $2$. This coordinate becomes singular as $\theta \to 0$ or $\frac{\pi}{2}$. However, the above discussion makes it clear that the metric $g(\tau)$ is nevertheless smooth. With respect to the coordinate $(\theta, \chi_1, \chi_2)$, $g(\tau)$ can be rewritten as follows
$$
g(\tau)=A\de\theta^2 +B \de\chi_1^2 + C \de\chi_2^2 +2D \de\chi_1 d\chi_2,
$$
where $A, B, C, D$ are functions of $\xi$ and $\theta$ only, given by
\begin{eqnarray}
A(\tau, \theta)&=&\frac{u(\tau)}{a^2(\tau)-b^2(\tau)\cos^2 2\theta}\doteqdot \frac{\bA}{w(\tau, \theta)}\nonumber\\
B(\tau, \theta)&=& \cos^2 \theta \frac{u(\tau)+2 d(\tau) \cos^2 \theta}{a^2(\tau)-b^2(\tau)\cos^2 2\theta}\doteqdot \frac{\bB}{w(\tau, \theta)}\nonumber\\
C(\tau, \theta)&=& \sin^2 \theta \frac{u(\tau)+2 d(\tau) \sin^2 \theta}{a^2(\tau)-b^2(\tau)\cos^2 2\theta}\doteqdot \frac{\bC}{w(\tau, \theta)}\label{ABCD}\\
D(\tau, \theta)&=&\frac{2c(\tau) \sin^2 \theta \cos^2 \theta}{a^2(\tau)-b^2(\tau)\cos^2 2\theta}\doteqdot \frac{\bD}{w(\tau, \theta)}.\nonumber
\end{eqnarray}
The right most equalities above define $\bA, \bB, \bC, \bD$.
Next, we need to  demonstrate how to reduce (\ref{rf1}) into a set of ODEs, which then yields the explicit formulae (\ref{formulae}) by solving them. We leave this computational part to the Appendix.

\section{Geometric properties and derived solutions }

Hamilton divided the ancient solutions to Ricci flow on $M \times (-\infty, 0)$ into two types, type-I and type-II, according to the behavior of the curvature. An ancient  solution $g(t)$ is called type-I if there exists a constant $C=C(M)>0$ such that
$$
|\Rm|(x, \tau)\le \frac{C}{\tau}.
$$
Here, as before,  $\tau=-t$. If the above estimate fails, the solution is called type-II.
Recall that Fateev's  family of solutions from Section 3 is a family of two parameters $\nu> 0, -1<k<1$, given by
\begin{equation}
\de s^2_{\nu, k}(\tau)=A(\tau, \theta)\de\theta^2 +B(\tau, \theta) \de\chi_1^2 +C(\tau, \theta)\de\chi_2^2 +2D \de\chi_1 \de\chi_2
\end{equation}
where  $A, B, C, D$ are given in (\ref{ABCD}) with $u(\tau), c(\tau), a(\tau), b(\tau), d(\tau)$ satisfying (\ref{formulae}). The following is easy to check.

\begin{proposition}\label{fateev-ii} The ancient solutions $\de s^2_{\nu, k}(\tau)$ described via (\ref{formulae}) in Section 3 are of type-II.
\end{proposition}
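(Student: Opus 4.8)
To show the solutions are of type-II, I would show that the type-I bound $|\Rm|(\cdot,\tau)\le C/\tau$ is forced to fail as $\tau\to\infty$. Since $C/\tau\to 0$ there, it suffices to produce a constant $c_0>0$ with $\sup_{\Sph^3}|\Rm|(\cdot,\tau)\ge c_0$ for all large $\tau$: then for any candidate $C$ one can pick $\tau$ so large that $C/\tau<c_0\le\sup_{\Sph^3}|\Rm|(\cdot,\tau)$, violating the bound. Equivalently, $\tau\sup_{\Sph^3}|\Rm|(\cdot,\tau)\ge c_0\tau\to\infty$, which is exactly the negation of the type-I condition. The whole problem thus reduces to locating, for large $\tau$, a region of definite (non-decaying) curvature.

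First I would read off the asymptotics of (\ref{formulae})--(\ref{eq-xitau}) as $\tau\to\infty$. From (\ref{eq-xitau}) we have $\xi\to\infty$ and $\nu\tau=\xi+O(1)$, so $\tau\sim\xi/\nu$. Writing $S=\sqrt{\cosh^2\xi-k^2\sinh^2\xi}=\sqrt{(1-k^2)\sinh^2\xi+1}$, one finds $u\to 2\lambda$, $d\to-\lambda$, $c\to-\lambda k$ and $a,b\to\lambda\sqrt{1-k^2}$, while crucially
\begin{equation*}
a^2-b^2=\frac{4\lambda^2 S}{\sinh^2\xi}\sim 8\lambda^2\sqrt{1-k^2}\,e^{-\xi}\to 0
\end{equation*}
exponentially fast. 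Since $w=(a^2-b^2)+b^2\sin^2 2\theta$, the formulae (\ref{ABCD}) show that on any interior region $\{\delta\le\theta\le\frac{\pi}{2}-\delta\}$ the metric converges to the constant-coefficient product $\frac{1}{2\lambda(1-k^2)}\big(\de\psi^2+\de\chi_1^2+\de\chi_2^2-2k\,\de\chi_1\de\chi_2\big)$, where $\de\psi=\de\theta/(\sin\theta\cos\theta)$; this is flat (positive definite exactly because $k^2<1$), so the interior curvature tends to $0$. The action therefore takes place near the two focal circles $F_1,F_2$.

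The key step is to analyze the geometry near $F_1=\{\theta=0\}$. Setting $\epsilon^2:=a^2-b^2$, one has $w\approx\epsilon^2+4b^2\theta^2$ for small $\theta$, so after the rescaling $\theta=\epsilon\rho$ the radial part becomes $A\,\de\theta^2\approx \frac{u\,\de\rho^2}{1+4b^2\rho^2}$, which is $O(1)$ and converges to a fixed profile. Along this collapsing disk the radial length $\int_0\sqrt{A}\,\de\theta\sim\log(1/\epsilon)\sim\xi/2\to\infty$, while the circumference $2\pi\sqrt{C}$ of the shrinking $\chi_2$--circle tends, for $\theta\gg\epsilon$, to the fixed positive constant $\pi\sqrt{2\lambda}/(\lambda\sqrt{1-k^2})$. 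Hence the $(\theta,\chi_2)$--disk converges to a smoothly capped, asymptotically cylindrical surface of revolution --- Hamilton's cigar --- and the full three-metric near $F_1$ converges to this cigar times the circle $F_1$, up to a bounded twist from the cross term $D$. A smoothly capped surface of revolution $\de r^2+\phi^2\de\chi^2$ with $\phi(0)=0,\ \phi'(0)=1$ can be flat only if $\phi\equiv r$, whose circumference diverges; since here the circumference stays bounded and positive, the limit is not flat and its Gauss curvature attains a positive maximum $c_0(\lambda,k)>0$ at the tip. Consequently $\sup_{\Sph^3}|\Rm|(\cdot,\tau)\ge c_0>0$ for all large $\tau$, and by the first paragraph the solution is type-II.

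The main obstacle is making the cigar limit near $F_1$ precise and extracting the uniform lower curvature bound at the tip (controlling the off-diagonal $D$-twist and confirming that the three-dimensional $|\Rm|$ inherits the positive Gauss curvature of the cigar factor). If one prefers to avoid identifying the limiting soliton altogether, an equivalent but more computational route is to evaluate a single sectional curvature --- say that of $\operatorname{span}(\p_\theta,\p_{\chi_2})$ --- directly from (\ref{ABCD}) at the rescaled location $\theta\sim\epsilon$, and to verify that it stays bounded below by a positive constant as $\tau\to\infty$; this bypasses the geometric identification at the cost of a longer curvature computation.
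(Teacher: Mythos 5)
Your proposal is correct in its logic and in its asymptotics, and it rests on the same essential fact as the paper's proof: the curvature fails to decay along the focal circle $F_1$, so $\tau\sup|\Rm|\to\infty$. The difference is in execution. The paper's proof is a one-line direct computation: with $e_1=\frac{1}{\sqrt{A}}\frac{\partial}{\partial\theta}$, it evaluates $\Ric(e_1,e_1)=g^{11}R_{11}$ on $F_1$ (i.e.\ at $\theta=0$, where $w=a^2-b^2$) using the closed form of $R_{11}$ from the appendix, namely $\Ric(e_1,e_1)=\frac{1}{u}\left(4(a^2+b^2)-2(u+2d)^2\right)$, and observes that since $u+2d\to 0$, $u\to 2\lambda$ and $a^2+b^2\to 2\lambda^2(1-k^2)$ this tends to a positive constant as $\tau\to\infty$. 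This is precisely your ``more computational route,'' except carried out at $\theta=0$ exactly rather than at $\theta\sim\epsilon$, which is cleaner because $F_1$ is a smooth point of the metric and no rescaled coordinate is needed. Your primary route --- identifying the blow-up limit near $F_1$ as a capped asymptotically cylindrical surface times a circle --- is geometrically more illuminating (and the paper does prove exactly such a cigar limit, but only for the $k=0$ subfamily in Section 4), yet it carries the two burdens you yourself flag: the off-diagonal $D$-term prevents a clean splitting for $k\neq 0$, and the Gauss curvature of a $(\theta,\chi_2)$-slice is not a priori a sectional curvature of the three-metric. Both are avoidable by simply doing the pointwise computation at $\theta=0$, so I would recommend promoting your fallback to the main argument; as written, the cigar identification is a plausible but unfinished sketch, while the direct evaluation closes the proof.
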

\begin{proof} We follow the notations and computations made in Appendix.
 Direct computation shows that on the focal manifold $F_1$, the Ricci curvature
 $$
 \Ric(e_1, e_1)=g^{11}(\tau, \theta) R_{11}(\tau, \theta) \to 1-k^2, \quad \mbox{as} \quad \tau \to \infty.
 $$
 Here, we set  $e_1=\frac{1}{\sqrt{A}}\frac{\partial}{\partial y_1}$.
\end{proof}

The trivial (Einstein) ancient solution on $\Sph^3$, which is type-I,  can be obtained from the  family $ds^2_{\nu, k}$ by scaling of the space time variables. This is a special case of  the convergence result of Hamilton in three dimensional manifolds with positive Ricci curvature.

\begin{proposition}
As $\nu\to 0$, keeping $k$ fixed,  the metric $\frac{1}{\nu}\de s^2_{\nu, k}(\nu \tau) \to \tau\, \de s^2_{stan}$, the family of Einstein metrics on $\Sph^3$.
\end{proposition}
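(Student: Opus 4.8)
The plan is to determine the leading-order behaviour, as $\nu\to 0$, of the five coefficient functions in (\ref{formulae}) after the time variable has been rescaled by $\nu$, and then to observe that exactly the $\de s^2_{\mathrm{stan}}$-part of the ansatz survives the limit. Set $h(\xi) = \xi - \frac{k}{2}\log\frac{1+k\tanh\xi}{1-k\tanh\xi}$, so that (\ref{eq-xitau}) evaluated at time $\nu\tau$ reads $\nu^2\tau = h(\xi)$. First I would pin down the value $\xi=\xi(\nu,\tau)$ attached to the rescaled solution: a short computation gives $h'(\xi) = \frac{1-k^2}{1-k^2\tanh^2\xi}>0$, so $h$ is an increasing odd analytic bijection with $h(0)=0$ and $h'(0)=1-k^2$. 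Hence for each fixed $\tau>0$ the root is unique and, as $\nu\to 0$,
$$\xi = \frac{\nu^2\tau}{1-k^2} + O(\nu^6) \longrightarrow 0.$$
This small-$\xi$ regime, together with $\lambda = \frac{\nu}{2(1-k^2)}\to 0$, is what all subsequent expansions use.

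Next I would expand (\ref{formulae}) for small $\xi$. Writing $\cosh^2\xi - k^2\sinh^2\xi = 1 + (1-k^2)\sinh^2\xi$ and using $\sinh\xi = \xi + O(\xi^3)$, $\coth\xi = \xi^{-1}+O(\xi)$, both $u = 2\lambda\coth\xi$ and $a = \lambda\frac{\sqrt{\cosh^2\xi - k^2\sinh^2\xi}+1}{\sinh\xi}$ have the same leading term $\frac{2\lambda}{\xi}$, so
$$u = \frac{2\lambda}{\xi}\big(1+O(\xi^2)\big) = \frac{1}{\nu\tau}\big(1+o(1)\big), \qquad a = \frac{2\lambda}{\xi}\big(1+O(\xi^2)\big) = \frac{1}{\nu\tau}\big(1+o(1)\big).$$
For the other three the numerators vanish to second order in $\xi$: one checks $\sqrt{\cosh^2\xi - k^2\sinh^2\xi}-1 = \tfrac12(1-k^2)\xi^2 + O(\xi^4)$ and $\sqrt{1-k^2\tanh^2\xi}-\cosh\xi = -\tfrac12(1+k^2)\xi^2 + O(\xi^4)$, whence $b = O(\lambda\xi)$, $d=O(\lambda\xi)$, and trivially $c=-\lambda k\tanh\xi = O(\lambda\xi)$; in each case this is $O(\nu^3)$.

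Finally I would assemble the metric and take the limit. Because $b=O(\nu^3)$ while $a=O(\nu^{-1})$, the function $w = a^2 - b^2\cos^2 2\theta = a^2\big(1+O(\nu^8)\big)$ uniformly in $\theta$, so $w^{-1} = \nu^2\tau^2\big(1+o(1)\big)$. The tensors $\de s^2_{\mathrm{stan}}$, $\phi_1^2+\phi_2^2$, $\phi_1\phi_2$ are independent of $\nu$, so it remains to take the three scalar coefficients to the limit: $\frac{1}{\nu}\frac{u}{w} = \frac{1}{\nu}\cdot\nu\tau\,(1+o(1)) \to \tau$, whereas $\frac{1}{\nu}\frac{d}{w}$ and $\frac{1}{\nu}\frac{c}{w}$ are of size $\frac{1}{\nu}\cdot O(\nu^3)\cdot\nu^2\tau^2 = O(\nu^4)\to 0$. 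Thus the $(\phi_1^2+\phi_2^2)$ and $\phi_1\phi_2$ terms disappear and the $\de s^2_{\mathrm{stan}}$-coefficient tends to $\tau$, giving $\frac{1}{\nu}\de s^2_{\nu,k}(\nu\tau)\to \tau\,\de s^2_{\mathrm{stan}}$. Since all the $O(\cdot)$ estimates are uniform in $\theta$, the same computation carried out in the ambient $\R^4$ representation of $\de s^2_{\nu,k}$ displayed in Section 3 shows the convergence is in fact smooth on all of $\Sph^3$, including across the focal circles $F_1,F_2$. The one step needing genuine care is inverting $\nu^2\tau=h(\xi)$ to get $\xi\sim\frac{\nu^2\tau}{1-k^2}$: it is precisely the correct power of $\nu$ here that forces the split $u,a\sim\nu^{-1}$ versus $b,c,d\sim\nu^3$, and hence selects the standard metric as the limit; everything after that is routine expansion.
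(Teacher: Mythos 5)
Your argument is correct and is essentially the paper's own proof: both hinge on inverting $\nu^2\tau=f(\xi)$ to get $\xi\sim\frac{\nu^2\tau}{1-k^2}$ (equivalently $\lim_{\xi\to0}f(\xi)/\xi=1-k^2$) and then reading off the small-$\xi$ asymptotics of $u,a,b,c,d,w$, which make the $\phi_1^2+\phi_2^2$ and $\phi_1\phi_2$ terms vanish and leave the coefficient $\tau$ on $\de s^2_{\operatorname{stan}}$. The paper packages this by normalizing each function by the appropriate power of $\lambda$ before taking $\xi\to0$, whereas you track explicit powers of $\nu$; the content is the same.
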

\begin{proof} This is essentially the well-known theorem of Hamilton, which asserts that as $\tau\to 0$, the rescaled metric converges to constant curvature metric on $\Sph^3$. Indeed, using the fact that
$$
\lim_{\xi \to 0}\frac{f(\xi)}{\xi} =1-k^2
$$
and $\nu^2 \tau=f(\xi)$, which implies that $\lim_{\nu \to 0}\frac{\xi}{\nu^2}=\frac{\tau}{1-k^2}$, we have
\begin{eqnarray*}
\lim_{\nu \to 0} \frac{1}{\nu}\de s^2_{\nu, k}(\nu \tau) &=& \lim_{\nu \to 0}\frac{2(1-k^2)\xi}{\nu^2}\left(\frac{1}{\xi}
 \left(\frac{\bar{u}}{\overline{w}}\de s^2_{\operatorname{stan}}+\frac{2\bar{d}}{\overline{w}}(\phi_1^2+\phi_2^2)
 +\frac{4\bar{c}}
 {\overline{w}}\phi_1\phi_2\right)
 \right)\\
&=& \tau\, \de s^2_{\operatorname{stan}}.
\end{eqnarray*}
Here, $\bar{u}=\frac{u}{\lambda}$, $\bar{c}=\frac{c}{\lambda}$, $\bar{d}=\frac{d}{\lambda}$ and $\overline{w}=\frac{w}{\lambda^2}$.
Note that in the above calculation we fix the parameter $\tau$ and let $\nu\to 0$ (as well as $\xi \to0$).
\end{proof}

For ancient solutions, the so-called $\kappa$ noncollapsing property is important. Recall from \cite{P-entropy} that {\it the metric $g$ (of $M^n$) is called $\kappa$-noncollapsed on the scale $\rho$, if every metric ball $B$ of radius $r<\rho$, which satisfies $|\Rm|(x)\le r^{-2}$ for every $x\in B$, has volume at least $\kappa r^n$.}
In \cite{P-entropy} Perelman proved that every ancient solution arising as a blow-up limit in the singularity of Ricci flow on compact manifolds is $\kappa$-noncollapsed on all scales for some $\kappa>0$. We call an ancient solution {\it collapsed} if there does not exists $\kappa>0$ such that it is $\kappa$-noncollapsed on all scales.
In contrast to Perelman's example we have below:

\begin{proposition} The ancient solutions  $\de s^2_{\nu, k}(\tau)$ are collapsed.
\end{proposition}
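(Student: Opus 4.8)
The plan is to show that, as $\tau\to\infty$ (that is, as $t\to-\infty$), the solution $\de s^2_{\nu,k}(\tau)$ converges, locally away from the two focal circles $F_1,F_2$, to a complete \emph{flat} product metric on $\R\times T^2$; a long flat cylinder of this type is manifestly collapsed, and transferring the collapse back to the finite-$\tau$ metrics yields the claim. First I would record the asymptotics of the profile functions as $\xi\to\infty$ (by \eqref{eq-xitau} this corresponds to $\tau\to\infty$). Writing $s=\sinh\xi$ and $q=\sqrt{\cosh^2\xi-k^2\sinh^2\xi}$, one checks from \eqref{formulae} the identities $ab=\lambda^2(1-k^2)$ and $u^2+2du=a^2-b^2=4\lambda^2 q/s^2$, together with the limits
\begin{equation*}
a^2,\ b^2\ \longrightarrow\ \lambda^2(1-k^2),\qquad a^2-b^2\to 0,\qquad u\to 2\lambda,\qquad d\to-\lambda,\qquad c\to-\lambda k.
\end{equation*}
Feeding these into \eqref{ABCD} and using $w=(a^2-b^2)+b^2\sin^2 2\theta$, for every fixed $\theta\in(0,\tfrac\pi2)$ the coefficients converge to
\begin{equation*}
B,\,C\ \longrightarrow\ \frac{1}{2\lambda(1-k^2)},\qquad D\ \longrightarrow\ \frac{-k}{2\lambda(1-k^2)},\qquad A\ \longrightarrow\ \frac{2}{\lambda(1-k^2)\sin^2 2\theta}.
\end{equation*}

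Since the limiting $T^2$-coefficients are \emph{constant} in $\theta$, introducing the arclength parameter $r=\int^\theta\sqrt{A_\infty}\,\de\theta'$ turns the limit metric into $\de r^2$ plus a fixed flat torus metric; as $A_\infty$ is nonintegrable at both $\theta=0$ and $\theta=\tfrac\pi2$, the parameter $r$ ranges over all of $\R$. Thus the limit is the complete flat cylinder $g_\infty=\de r^2+g_{T^2}$ on $\R\times T^2$, where $g_{T^2}$ has fixed finite area $\mathcal A=\frac{2\pi^2}{\lambda\sqrt{1-k^2}}$. Because the coefficients in \eqref{ABCD} are explicit functions of $\theta$ converging together with all of their $\theta$-derivatives, the convergence $g(\tau)\to g_\infty$ is in $C^\infty$ uniformly on every compact $\theta$-subinterval of $(0,\tfrac\pi2)$; in particular $|\Rm|_{g(\tau)}\to 0$ uniformly on such subintervals.

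Now I would produce the collapsing balls. A metric ball of radius $r$ in the flat cylinder is contained in the slab of points whose arclength coordinate lies within $r$ of that of the center, hence has volume at most $2\mathcal A\,r$ and volume ratio at most $2\mathcal A/r^2$. Given $\kappa>0$, fix $r_\kappa$ with $2\mathcal A/r_\kappa^2<\kappa$ and let $p$ be a point on the slice $\theta=\tfrac\pi4$. The ball $B_{g_\infty}(p,r_\kappa)$ lies in a compact $\theta$-subinterval $[\theta_-,\theta_+]\Subset(0,\tfrac\pi2)$, since the endpoints are at infinite $g_\infty$-distance. By the uniform $C^\infty$ convergence on this subinterval, for all large $\tau$ the ball $B_{g(\tau)}(p,r_\kappa)$ is still contained in a fixed compact subinterval, on it $|\Rm|_{g(\tau)}\le r_\kappa^{-2}$, and $\operatorname{Vol}_{g(\tau)} B_{g(\tau)}(p,r_\kappa)\to\operatorname{Vol}_{g_\infty} B_{g_\infty}(p,r_\kappa)\le 2\mathcal A\,r_\kappa<\kappa\,r_\kappa^3$. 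Hence $g(\tau)$ fails to be $\kappa$-noncollapsed at the scale $r_\kappa$. As $\kappa>0$ was arbitrary, no single $\kappa$ makes the solution $\kappa$-noncollapsed on all scales; that is, the solution is collapsed.

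The one point that requires care—and the main obstacle—is the containment and volume estimate for $B_{g(\tau)}(p,r_\kappa)$: one must guarantee that the ball does not reach toward the focal circles $F_1,F_2$, where the curvature stays of order $1-k^2$ (as used in Proposition \ref{fateev-ii}) rather than tending to $0$. This is exactly what the nonintegrability of $\sqrt{A_\infty}$ at the endpoints provides, since it forces the limiting—and hence, for large $\tau$, the actual—distance from the central slice to the focal circles to exceed $r_\kappa$; uniform convergence on the resulting compact interior region then delivers both the curvature bound over the whole ball and the convergence of the volumes.
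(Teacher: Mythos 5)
Your proposal is correct and follows essentially the same route as the paper: both take the limit $\xi\to\infty$ (equivalently $\tau\to\infty$) of the coefficients in (\ref{ABCD}) to obtain the complete flat metric $\frac{1}{\nu}\bigl(\frac{\de\theta^2}{\sin^2\theta\cos^2\theta}+\de\chi_1^2+\de\chi_2^2-2k\,\de\chi_1\de\chi_2\bigr)$ on $\R\times\mathsf{S}^1\times\mathsf{S}^1$, which your limits for $A,B,C,D$ reproduce exactly, and then conclude collapse from the finite cross-sectional area of this cylinder. Your write-up merely supplies the quantitative ball-containment and volume-ratio details that the paper's one-line conclusion leaves implicit.
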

\begin{proof} Let $\xi\to \infty$, which is equivalent to $\tau\to \infty$, as $\lim_{\xi \to \infty} \frac{\de f}{\de \xi} \to 1$.
$$
\lim_{\xi \to \infty} \de s^2_{\nu, k}(\tau)=\frac{1}{\nu}\left(\frac{1}{\sin ^2 \theta \cos ^2\theta}\de\theta^2+ \de\chi_1^2 +\de\chi_2^2-2k\, \de\chi_1 \de\chi_2\right)
$$
on $(0, \frac{\pi}{2})\times \mathsf{S}^1 \times \mathsf{S}^1$, which is a collapsed, complete  metric on $\R\times \mathsf{S}^1 \times \mathsf{S}^1$. Hence the family $\de s^2_{\nu, k}(\tau)$ must be  collapsed.
\end{proof}

\begin{corollary} Not every type-II ancient solution is isometric (up to scaling) to the rotationally symmetric example of Perelman.
\end{corollary}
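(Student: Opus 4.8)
The plan is to use Fateev's family itself as the required counterexample, so that no new construction is needed. Proposition \ref{fateev-ii} already establishes that each $\de s^2_{\nu, k}(\tau)$ is a type-II ancient solution, and the immediately preceding proposition shows that it is collapsed, i.e. no $\kappa>0$ renders it $\kappa$-noncollapsed on all scales. On the other hand, Perelman's rotationally symmetric example is $\kappa$-noncollapsed on all scales for some $\kappa>0$, as recalled in the introduction. It therefore suffices to verify that the collapsed/non-collapsed dichotomy is preserved under the equivalence relation ``isometric up to scaling,'' after which a collapsed solution can never be identified with a non-collapsed one.

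The one point requiring care is the scaling behaviour of the $\kappa$-noncollapsing condition. Suppose $g$ is $\kappa$-noncollapsed on the scale $\rho$, and replace $g$ by $\lambda^2 g$ with $\lambda>0$ constant. A metric ball of radius $r$ for $g$ becomes a ball of radius $\lambda r$ for $\lambda^2 g$, the curvature bound $|\Rm|\le r^{-2}$ transforms into $|\Rm|\le (\lambda r)^{-2}$, and the volume lower bound $\kappa r^n$ becomes $\kappa(\lambda r)^n$. Hence $\lambda^2 g$ is $\kappa$-noncollapsed on the scale $\lambda\rho$ with the \emph{same} $\kappa$. Since $\lambda\rho$ ranges over all positive reals as $\rho$ does, being $\kappa$-noncollapsed on all scales is invariant under homothety, and so therefore is its negation. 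The condition is manifestly invariant under isometry as well, as an isometry carries metric balls of radius $r$ to metric balls of radius $r$ while preserving both the curvature bound and the volume.

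Combining these observations closes the argument. If $\de s^2_{\nu, k}(\tau)$ were isometric up to scaling to Perelman's example, the invariance just established would force $\de s^2_{\nu, k}(\tau)$ to be $\kappa$-noncollapsed on all scales for the same $\kappa>0$ that works for Perelman's example, contradicting the preceding proposition. Thus $\de s^2_{\nu, k}(\tau)$, a type-II ancient solution by Proposition \ref{fateev-ii}, is not isometric up to scaling to the rotationally symmetric example, which is exactly the assertion of the corollary.

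I do not anticipate any genuine obstacle in this argument. The only step that demands attention is making the phrase ``isometric up to scaling'' precise for Ricci flow solutions, namely a single homothety constant together with a fixed isometry intertwining the two families at corresponding times, and then confirming that this precise notion respects the collapsing dichotomy; the elementary scaling computation above settles precisely this, so the corollary is an immediate consequence of the two preceding propositions together with the known non-collapsing of Perelman's solution.
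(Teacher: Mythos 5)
Your argument is exactly the paper's: Fateev's solutions are type-II and collapsed, Perelman's is non-collapsed on all scales, and the non-collapsing condition is invariant under isometry and homothety, so the two cannot coincide up to scaling. The paper states this in one line; your only addition is the (correct) explicit verification that $\kappa$-noncollapsing is scale-invariant.
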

\begin{proof} Since Perelman's example is non-collapsed in all scale, it can not be isometric (after scaling) to $ds^2_{\nu, k}(\tau)$.
\end{proof}

Next, we show that by re-parametrizing and taking the limit of  $\de s^2_{\nu, k}(\tau)$ as $\xi\to \infty$, one can
 obtain the product of the cigar metric on $\R^2$ \cite{form} with $\mathsf{S}^1$. For this, we first describe a special family of $\de s^2_{\nu, k}(\tau)$.

A family of ancient solutions $\de s^2_\nu(\tau)$ can be obtained from $\de s^2_{\nu, k}(\tau)$ simply by letting $k=0$ for which
\begin{eqnarray}
a(\tau)= \lambda \coth \frac{\xi}{2}, &\quad& b(\tau)=\lambda \tanh \frac{\xi}{2},\nonumber\\
c(\tau)=0,&\quad& d(\tau)= -\lambda \tanh \frac{\xi}{2},\\
u(\tau) = 2\lambda \coth \xi, &\quad& \xi=\nu \tau.\nonumber
\end{eqnarray}
Here, $\lambda=\frac{\nu}{2}$ and the coefficients $A, B, C, D$ are given by the expressions
\begin{eqnarray}
A(\tau, \theta)&=& \frac{1}{\nu} \frac{\cosh \xi \sinh \xi}{\left(\cos^2 \theta +\sin ^2 \theta \cosh \xi\right)\left(\sin^2 \theta +\cos ^2 \theta \cosh \xi\right)}, \nonumber\\
B(\tau, \theta)&=&\frac{1}{\nu} \frac{ \cos ^2 \theta \sinh \xi}{\sin ^2 \theta +\cos ^2 \theta \cosh \xi},\nonumber\\
C(\tau, \theta)&=&\frac{1}{\nu} \frac{ \sin ^2 \theta \sinh \xi}{\cos ^2 \theta +\sin ^2 \theta \cosh \xi}, \label{formulaABC}\\
D(\tau, \theta)&=&0. \nonumber
\end{eqnarray}
The metric $\de s^2_\nu(\tau)=A\de\theta^2 +B \de\chi^2 +C \de\chi_2^2$ is a doubly warped product metric, which was first discovered by Fateev in \cite{Fa95}. The regularity of the metric can also be seen from  Proposition \ref{reg1}. In terms of the notation of the last section, the metric has the form
$$
\de s^2_\nu(\tau)=\frac{1}{w(\tau, \theta)}\left((a(\tau)+b(\tau))\de s^2_{\operatorname{stan}}-2b(\tau)(\phi_1^2+\phi_2^2)\right).
$$
The formulae (\ref{formulaABC}) can also be obtained from solving the ODE system:
\begin{eqnarray}
\frac{\de a}{\de \tau} &=& -a(a-b),\label{ode-ab1}\\
\frac{\de b}{\de \tau} &=& b(a-b)\label{ode-ab2}
\end{eqnarray}
which is equivalent to the Ricci flow equation. Note that this system has a simple first integral $a\, b =\operatorname{constant}$.
Letting $e_1=\frac{1}{\sqrt{A}}\frac{\partial}{\partial y_1}$, $e_2=\frac{1}{\sqrt{B}}\frac{\partial }{\partial y_2}$ and $e_3=\frac{1}{\sqrt{C}}\frac{\partial}{\partial y_3}$, the curvature operator of $ds^2_{\nu}(\tau)$ is diagonal with respect to $e_1\wedge e_2$, $e_1\wedge e_3$ and $e_2\wedge e_3$:
\begin{eqnarray*}
&\,&\Rm=\\
&\,&\left(\begin{array}{lll}-\frac{1}{2A}\left(\left(\frac{B'}{B}\right)'+\frac{B'}{2B}
\left(\frac{B'}{B}-\frac{A'}{A}\right)
\right)& \quad\quad  0& \quad \quad 0 \\
0\quad \quad &-\frac{1}{2A}\left(\left(\frac{C'}{C}\right)'
+\frac{C'}{2C}\left(\frac{C'}{C}-\frac{A'}{A}\right)\right)&\quad \quad 0 \\
0 \quad\quad & \quad \quad 0& -\frac{1}{4A}\frac{B'}{B}\frac{C'}{C}\end{array}\right)\\
&\quad &\quad =(a-b)\left(\begin{array}{ccc}  -\frac{a-b}{a+b}+2\frac{a-b\cos 2\theta}{a+b\cos 2\theta}& \quad\quad  0& \quad \quad 0 \\
0\quad \quad & -\frac{a-b}{a+b}+2\frac{a+b\cos 2\theta}{a-b\cos 2\theta}&\quad \quad 0 \\
0 \quad\quad & \quad \quad 0& \quad \quad\frac{a-b}{a+b} \end{array}\right).
\end{eqnarray*}
It is worthwhile to mention that this solution has pinched sectional curvature with pinching constant that tends to zero as $\tau\to \infty$. Since its curvature operator has three different eigenvalues generically, it can not be  rotationally symmetric. Clearly this metric is not homogenous.

An interesting feature of this ancient solution is that one can obtain Hamilton's cigar solution by taking a suitable limit of the metric as $\tau\to \infty$. Recall that Hamilton's cigar is a metric on $\R^2$ which is gradient steady soliton. Under the cylindrical coordinate, it can be expressed as
$$
\de s^2_{\operatorname{cigar}}=\frac{1}{\nu} \frac{ \de x^2+ \de y^2}{1+e^{2y}} ,
$$
where $(x, y)\in \mathsf{S}^1\times \R$. (As before we identify $\R/2\pi\Z$ with $\mathsf{S}^1$.)

Next, introduce a new variable $\tilde{y}$ such that $\tanh \tilde{y}=\cos ^2 \theta -\sin^2 \theta$. It is easy to see that $\tilde {y}\in\R$ and
$$
\de s^2_{\nu}(\tau)=\frac{\sinh \xi}{\nu}\left(\frac{\cosh \xi  \de \tilde{y}^2}{(e^{2\tilde{y}}+e^{-2\tilde{y}})\cosh \xi +1+\cosh^2 \xi} +\frac{\de\chi_1^2}{e^{-2\tilde{y}}+\cosh \xi}+\frac{\de\chi_2^2}{e^{2\tilde{y}}+\cosh\xi} \right).
$$
Letting at this point $\tilde{y}= y+\frac{\xi}{2}$ and taking $\xi\to \infty$, we have finally
$$
\de s^2_{\nu}(\tau) \to \frac{1}{\nu}\left( \frac{\de y^2+\de\chi_2^2}{1+2e^{2y}}+\de\chi_1^2\right).
$$
(Another simple translation takes the above into the standard form of Hamilton's cigar metric.) Hence, we have the following:
\begin{proposition}
After the change of variables described above, $\de s^2_{\nu}(\tau)$ converges to the product of Hamilton's  cigar with  $\mathsf{S}^1$ as $\tau\to \infty$.
\end{proposition}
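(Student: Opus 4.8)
The plan is to treat this as a direct computation in the two coordinates indicated: first pass from $\theta$ to $\tilde{y}$ with $\tanh\tilde{y}=\cos 2\theta$, which recasts the spatial metric in the intermediate form displayed above, and then apply the time-dependent shift $\tilde{y}=y+\xi/2$ and let $\xi\to\infty$. I work on a single spatial slice throughout, so that $\xi=\nu\tau$ is constant for the coordinate change and $\de\tilde{y}=\de y$; the goal is to show that the three coefficient functions of $\de s^2_\nu(\tau)$ converge, locally uniformly on $\R\times\mathsf{S}^1\times\mathsf{S}^1$, to those of the claimed limit metric.

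First I would record the change of variables. From $\tanh\tilde{y}=\cos 2\theta$ one reads off $\cos^2\theta=\tfrac12(1+\tanh\tilde{y})=\tfrac{e^{\tilde{y}}}{2\cosh\tilde{y}}$ and $\sin^2\theta=\tfrac{e^{-\tilde{y}}}{2\cosh\tilde{y}}$, while differentiating gives $\sin^2 2\theta=1-\tanh^2\tilde{y}=\operatorname{sech}^2\tilde{y}$ and hence $\de\theta^2=\tfrac14\operatorname{sech}^2\tilde{y}\,\de\tilde{y}^2$. The two warping factors in (\ref{formulaABC}) then factor as
\[
\cos^2\theta+\sin^2\theta\cosh\xi=\frac{e^{\tilde{y}}+e^{-\tilde{y}}\cosh\xi}{2\cosh\tilde{y}},\qquad\sin^2\theta+\cos^2\theta\cosh\xi=\frac{e^{-\tilde{y}}+e^{\tilde{y}}\cosh\xi}{2\cosh\tilde{y}}.
\]
The numerator of their product is exactly the quantity $(e^{2\tilde{y}}+e^{-2\tilde{y}})\cosh\xi+1+\cosh^2\xi$ occurring in the $\de\tilde{y}^2$ term, while the factor $4\cosh^2\tilde{y}$ in its denominator cancels against the $\tfrac14\operatorname{sech}^2\tilde{y}$ coming from $\de\theta^2$. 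Substituting these identities into $A\,\de\theta^2+B\,\de\chi_1^2+C\,\de\chi_2^2$ and simplifying term by term produces the intermediate form of $\de s^2_\nu(\tau)$ stated above.

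Next I would take the limit. Writing $\tilde{y}=y+\xi/2$ gives $e^{\pm2\tilde{y}}=e^{\pm2y}e^{\pm\xi}$, so that fixing $y$ and letting $\xi\to\infty$ zooms into a neighborhood of the focal circle $\theta=0$. Dividing numerator and denominator of each coefficient by the appropriate power of $e^{\xi}$ and using $\sinh\xi\sim\cosh\xi\sim\tfrac12 e^{\xi}$, the coefficient of $\de\chi_1^2$ tends to $1/\nu$ (its denominator contains $e^{-2\tilde{y}}=e^{-2y}e^{-\xi}\to0$), while those of $\de\tilde{y}^2=\de y^2$ and of $\de\chi_2^2$ each tend to $\tfrac1\nu(1+2e^{2y})^{-1}$ (the surviving balance being between the $e^{2\tilde{y}}\cosh\xi$ and $\cosh^2\xi$ terms, both of order $e^{2\xi}$). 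These convergences are locally uniform in $(y,\chi_1,\chi_2)$, so $\de s^2_\nu(\tau)$ converges to $\tfrac1\nu\bigl((1+2e^{2y})^{-1}(\de y^2+\de\chi_2^2)+\de\chi_1^2\bigr)$.

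Finally I would identify the limit. It splits orthogonally into the flat factor $\tfrac1\nu\de\chi_1^2$ on $\mathsf{S}^1$ and the factor $\tfrac1\nu(1+2e^{2y})^{-1}(\de y^2+\de\chi_2^2)$; the translation $y\mapsto y+\tfrac12\log2$ leaves $\de y$ unchanged and turns $1+2e^{2y}$ into $1+e^{2y}$, matching this second factor with Hamilton's cigar $\tfrac1\nu(1+e^{2y})^{-1}(\de x^2+\de y^2)$, where $\chi_2$ plays the role of the angular variable $x\in\mathsf{S}^1$. Thus the limit is the Riemannian product of Hamilton's cigar with $\mathsf{S}^1$. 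I expect the only genuine obstacle to be the bookkeeping in the first step—the clean factorization of the warping denominators and the $\de\theta^2$ conversion—together with the mild care needed to take the limit slice by slice (so that $\de\tilde{y}=\de y$) and to upgrade pointwise to locally uniform convergence of the explicit coefficients, which is what licenses the smooth geometric conclusion.
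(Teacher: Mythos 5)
Your argument is correct and follows exactly the route the paper takes: the substitution $\tanh\tilde{y}=\cos 2\theta$ leading to the displayed intermediate form, then the shift $\tilde{y}=y+\xi/2$ and the limit $\xi\to\infty$, followed by a final translation to match Hamilton's cigar. You have simply supplied the intermediate identities (the factorizations of the warping denominators and the $\de\theta^2$ conversion) that the paper leaves implicit, and your limiting coefficients agree with the paper's stated limit $\frac{1}{\nu}\bigl(\frac{\de y^2+\de\chi_2^2}{1+2e^{2y}}+\de\chi_1^2\bigr)$.
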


It is a little surprising, but in fact one can obtain a family of type-I ancient solutions from $\de s^2_{\nu, k}(\tau)$ by a suitable limiting process. By Proposition \ref{fateev-ii}, to obtain a type-I solution one has to let $k\to 1$. Indeed, let $k\to 1$ and $\nu\to 0$, but in the manner that
$$
\frac{2\nu}{1-k^2}=\Omega
$$
is a fixed number. Noting that $\lambda=\frac{\nu}{2(1-k^2)}=\frac{\Omega}{4}$,
the relation $\nu\tau =f(\xi)$ becomes
\begin{eqnarray*}
\Omega \tau &=&2\lim_{k\to 1}\frac{\xi -\frac{k}{2}\log \left(\frac{1+k\tanh \xi}{1-k \tanh \xi}\right)}{1-k^2}\\
&=& 2\lim_{k\to 1}\frac{1}{1+k} \left( \frac{1}{2}\log \left(\frac{1+k\tanh \xi}{1-k \tanh \xi}\right)+\frac{k}{2}\frac{2\tanh \xi}{1-k^2 \tanh^2 \xi}\right)\\
&=& \xi +\frac{2\sinh 2\xi}{2}.
\end{eqnarray*}
It is easy to check in this case that
\begin{eqnarray*}
a(\tau)=\frac{\Omega}{2\sinh \xi}, &\quad& b(\tau)=0,\\
c(\tau)=d(\tau)=-\frac{\Omega}{4}\tanh \xi, &\quad & u(\tau)=\frac{\Omega}{2}\coth \xi.
\end{eqnarray*}
Then, the limit metric has the form
\begin{eqnarray*}
\de s^2_{\Omega}(\tau)&=&\frac{\sinh 2\xi}{\Omega}\left(\de\theta^2 +\cos^2\theta(1-\tanh^2\xi \cos^2\theta)\de\chi_1^2 +\sin^2\theta(1-\tanh^2\xi \sin^2\theta)\de\chi_2^2 \right.
\\&\quad& \left. -2\sin^2\theta \cos^2 \theta \tanh^2 \xi \de\chi_1\de\chi_2\right)\\
&=& \frac{\sinh 2\xi}{\Omega}\left(\de s^2_{\operatorname{stan}}-\tanh ^2 \xi (\phi_1^2 +\phi_2^2 +2\phi_1\phi_2)\right).
\end{eqnarray*}

\begin{proposition} \label{fateev-typei} The family of metrics $\de s^2_{\Omega}(\tau)$ are type-I collapsed ancient solutions.
\end{proposition}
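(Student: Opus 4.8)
The plan is to recognize $\de s^2_\Omega(\tau)$ as a collapsing Berger metric on the Hopf fibration $\Sph^1\to\Sph^3\to\CP^1$ and to read off both the curvature decay and the volume collapse from the bundle curvature formula (\ref{principle-curvature}). First, regarding the very word \emph{ancient}: the map $\xi\mapsto \xi+\tfrac12\sinh 2\xi$ is an increasing diffeomorphism of $(0,\infty)$, so the relation $\Omega\tau=\xi+\tfrac12\sinh 2\xi$ defines the family for all $\tau\in(0,\infty)$; and since $\de s^2_\Omega(\tau)$ is the smooth (space--time) limit of the genuine solutions $\de s^2_{\nu,k}$ under $k\to1$, $\nu\to0$ with $\tfrac{2\nu}{1-k^2}=\Omega$ fixed, it again solves (\ref{rf1}). (Equivalently, I would check directly that the displayed $a,b,c,d,u$ satisfy the reduced ODE system of the Appendix.)

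Next I would put the metric into bundle form. Writing $\omega_0=\cos^2\theta\,\de\chi_1+\sin^2\theta\,\de\chi_2$ for the standard Hopf connection form (so $\phi_1+\phi_2=\omega_0$) and using the splitting $\de s^2_{stan}=\pi^*g_{\CP^1}+\omega_0^2$, where $g_{\CP^1}$ is the round metric of curvature $4$ on the base $\Sph^2$, the displayed formulae for $a,b,c,d,u$ collapse to
\[
\de s^2_\Omega(\tau)=\tfrac{\sinh 2\xi}{\Omega}\,\pi^*g_{\CP^1}+\tfrac{2\tanh\xi}{\Omega}\,\omega_0^2=b(\tau)\,\pi^*g_{\CP^1}+a(\tau)\,\omega_0^2,
\]
i.e. the connection metric $\widetilde g_{a,b}$ of Section 2 over the round base, with $b(\tau)=\tfrac{\sinh 2\xi}{\Omega}$ and $a(\tau)=\tfrac{2\tanh\xi}{\Omega}$. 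This is a Berger sphere whose Hopf fibre is squashed by $a/b=1-\tanh^2\xi=\cosh^{-2}\xi\to0$.

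For the type-I bound I would feed this into (\ref{principle-curvature}). Here $\mathfrak g=\mathfrak u(1)$ is abelian, so $C^\gamma_{\alpha\beta}=0$; the base $\CP^1$ has constant (hence bounded) curvature, and the connection curvature $F$ is a fixed multiple of the Fubini--Study form, so it is parallel, $F^\alpha_{ij,k}=0$, with $|F|$ bounded. Consequently every component of $\widetilde R$ in the adapted orthonormal frame is controlled by $\tfrac1b$ and $\tfrac{a}{b^2}$, and since $a/b=\cosh^{-2}\xi\le1$ we have $\tfrac{a}{b^2}\le\tfrac1b$, whence $|\Rm|(\cdot,\tau)\le \tfrac{C}{b}=\tfrac{C\Omega}{\sinh 2\xi}$ uniformly on $\Sph^3$. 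The elementary inequality $2\xi\le\sinh 2\xi$ gives $\Omega\tau=\xi+\tfrac12\sinh 2\xi\le\sinh 2\xi$, so $b\ge\tau$ and therefore $|\Rm|(\cdot,\tau)\le C/\tau$: the solution is type-I.

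For collapsedness I would compare the fibre length with the curvature scale. From the volume element one finds $\operatorname{Vol}(\Sph^3,\de s^2_\Omega(\tau))=2\pi^2\sqrt{a}\,b$, the product of the bounded fibre length $\sim\sqrt a\to\sqrt{2/\Omega}$ and the unbounded base area $\sim b$. Fixing $C_0$ with $|\Rm|\le C_0/b$ and taking $r=\sqrt{b/C_0}$, the bound $|\Rm|\le r^{-2}$ holds on all of $\Sph^3$, so any ball $B$ of radius $r$ satisfies
\[
\frac{\operatorname{Vol}(B)}{r^3}\le\frac{\operatorname{Vol}(\Sph^3,\de s^2_\Omega(\tau))}{r^3}=\frac{2\pi^2\sqrt{a}\,b}{r^3}=\frac{2\pi^2 C_0\sqrt{a}}{r}\longrightarrow0
\]
as $\tau\to\infty$ (since $r\to\infty$ while $a$ stays bounded); equivalently, rescaling by $1/b$ makes the metric converge to the round $\Sph^2$ of curvature $4$ while the Hopf fibre pinches off. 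Hence no $\kappa>0$ can make the flow $\kappa$-noncollapsed on all scales, so $\de s^2_\Omega(\tau)$ is collapsed. The main obstacle is the middle step: pinning down the metric precisely as $b\,\pi^*g_{\CP^1}+a\,\omega_0^2$ over the \emph{round} base with parallel, bounded connection curvature, and then extracting from (\ref{principle-curvature}) a bound that is genuinely uniform over $\Sph^3$ — in particular near the poles $\theta=0,\tfrac\pi2$, where the $(\theta,\chi_1,\chi_2)$ chart degenerates but the frame-based quantities $F$ and the base curvature remain smooth and bounded. Once the clean form with $a/b=\cosh^{-2}\xi$ is in hand, both the $1/\tau$ decay and the volume collapse are immediate.
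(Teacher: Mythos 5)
Your proposal is correct and follows essentially the same route as the paper: Proposition \ref{fateev-typei} is proved there by rewriting $\de s^2_{\Omega}(\tau)$ as the connection metric $\frac{\sinh 2\xi}{\Omega}(\psi_1^2+\psi_2^2)+\frac{2\tanh\xi}{\Omega}\psi_3^2$ on the Hopf fibration over $\CP^1$ and then invoking Theorem \ref{main1-u1}, whose proof contains exactly your two ingredients ($|\widetilde{\Rm}|\le C/b$ with $b\gtrsim\tau$ for type-I, and bounded fibre length against growing base for collapsing). You merely carry out the specialization of Theorem \ref{main1-u1} explicitly instead of citing it, and your quantitative volume-ratio computation at scale $r=\sqrt{b/C_0}$ is a slightly more detailed version of the paper's collapsing remark.
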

\begin{proof}
Now introduce the following change of variables:
$$
\Theta =2\theta,\quad \Phi=\frac{\chi_1+\chi_2}{2}, \quad \Psi =\frac{\chi_1-\chi_2}{2}.
$$
Introduce the $1$-forms
\begin{eqnarray*}
\psi_1&=&\sin \Phi \, \de\Theta -\sin \Theta \cos \Phi \, \de \Psi , \\
\psi_2&=& -\cos \Phi\,  \de \Theta -\sin \Theta \sin \Phi\,  \de \Psi , \\
\psi_3&=& -\de\Phi-\cos \Theta\,  \de\Psi.
\end{eqnarray*}
Direct calculation shows that
$$
\de s^2_{\Omega}=\frac{\sinh 2\xi}{\Omega} \left(\psi_1^2+\psi_2^2\right)+\frac{2\tanh \xi}{\Omega} \psi_3^2.
$$
Viewing $\Sph^3$ as the total space of the Hopf fibration over $\CP^1$,
it is easy to check that
$$
\psi_1^2+\psi_2^2 =\de\Theta^2 +\sin^2 \Theta \de\Psi^2
$$
corresponds to the metric on the base manifold $\CP^1$. Hence, $\{\psi_1, \psi_2\}$ form a moving frame of the base manifold $\CP^1$. Also $\de\psi_3=-\psi_1\wedge \psi_2$, which is the $-1$ multiple of the K\"ahler form. Hence $\psi_3$ can be viewed
as a connection $1$-form on the total
space; in fact,  this example fits into the generalization considered in the next section. The rest of the proof is a special case of Theorem \ref{main1-u1}. \end{proof}

The type-I example above  was also recently studied in  Theorem 2.1 of \cite{CS}.

\section{Type-I ancient solutions on a $\mathsf{U}(1)$-bundle over a K\"ahler-Einstein manifold of positive scalar curvature}
In this section, we shall construct examples generalizing the metrics $\de s^2_{\Omega}$ in Proposition \ref{fateev-typei}.
 First, recall the computations in Section 2.2 on the general connection metric on a principle bundle. If we consider a  $\mathsf{U}(1)$-bundle $P$, the Lie group/algebra is trivial and the Riemannian curvature tensor and Ricci curvature of the variation metric $\widetilde{g}_{a, b}$   are simply given by
 \beqn
 \widetilde{R}_{i0 j 0} &=& \frac{a}{4b^2} F_{ik} F_{jk}, \CR
 \widetilde{R}_{ijk 0} &=& - \frac{1}{2} \left( \frac{a}{b^3} \right)^{\frac{1}{2}} F_{ij,k}, \CR
 \widetilde{R}_{ijkl} &=& \frac{1}{b} R_{ijkl} - \frac{a}{4b^2} \left( 2 F_{ij} F_{kl}
 + F_{ik}F_{jl} - F_{il} F_{jk} \right),
 \eeqn
 and
 \beqn
 \widetilde{R}_{00} &=& \frac{a}{4b^2} F_{ij} F_{ij}, \CR
 \widetilde{R}_{i \alpha} &=& \frac{1}{2} \left( \frac{a}{b^3} \right)^{\frac{1}{2}} F_{ij,j}, \CR
 \widetilde{R}_{ij} &=& \frac{1}{b} R_{ij} - \frac{a}{2b^2} F_{ik} F_{jk}.
 \eeqn
We further restrict ourselves to the case that  $(M^{2m}, J, g)$ is a compact K\"ahler-Einstein manifold such that $\Ric(g) = p \, g$ for some $p>0$  and $P$ a principle $\mathsf{U}(1)$-bundle with a connection $1$-form $\sqrt{-1} \theta$ such
 that its curvature satisfies
 \[
 \de \theta = q  \omega
 \]
 for some $q\ne 0$,
 where $\omega$ is the K\"ahler form of $(M, g)$. If we normalize so that $\omega$ is an integral class, then $p$ and $q$ can only take rational values (one can even further normalize so that $p=1$). The typical examples include the $\mathsf{U}(1)$-bundle over $\CP^{m}$.

 \begin{theorem} \label{main1-u1} Let $(M, g)$ be a K\"ahler-Einstein manifold with positive Chern class and
   let $P$ be a $\mathsf{U}(1)$ principle bundle over $M$ with a connection $1$-form $\theta$ such that its curvature is a nonzero multiple of the K\"ahler form. There exist positive functions $a_\Lambda(\tau)$ and $b_\Lambda(\tau)$ on $(0, \infty)$ (depending on a parameter $\Lambda$) such that $\widetilde{g}_{a, b} =a\, \theta( \cdot)\otimes \theta(\cdot) + b\,  \pi^* g $ is an ancient solution to Ricci flow on the total space $P^n$ ($n=2m+1$). Moreover, the solution is of type-I and collapsed. It has    positive curvature operator when $(M, g)$ is $(\CP^{m}, c\, g_{\operatorname{FS}})$, where $g_{\operatorname{FS}}$ is the Fubini-Study metric and  $c>0$ is a constant.
 \end{theorem}

\begin{remark} Since $M$ is algebraic, for any $q$ such that $q\omega \in H^{1, 1}(M, \C)\cap H^2(M, \Z)$ by Lefschetz theorem there always  exists a $\mathsf{U}(1)$ bundle and a connection $\theta$ such that its curvature form is $q\omega$. Under the assumption of the theorem the existence of Einstein metrics on $P$ was known first in \cite{Kob}.
\end{remark}

Since the Einstein metrics on spheres are non-collapsed ancient solutions to the Ricci flow, we have the following immediate consequence.

 \begin{corollary} In the classification result of type-I ancient solutions in \cite{Ni-anc},
the non-collapsed condition can not be removed.
\end{corollary}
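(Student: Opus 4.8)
The plan is to deduce this from Theorem \ref{main1-u1}, which I take as given. Recall what the classification of \cite{Ni-anc} asserts: a compact, type-I, $\kappa$-noncollapsed ancient solution whose curvature operator remains inside one of the Böhm--Wilking pinching families -- in particular the family built from the cone of $2$-positive curvature operators, or the one from the cone of positive complex sectional curvatures \cite{BS} -- must be isometric to a spherical space form. To show the $\kappa$-noncollapsing hypothesis is essential, I would produce a single example satisfying every hypothesis of that theorem \emph{except} $\kappa$-noncollapsing, and whose conclusion nonetheless fails.

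Such an example is furnished directly by Theorem \ref{main1-u1}. Take $(M,g)=(\CP^{m},c\,g_{\operatorname{FS}})$ and the Hopf $\mathsf{U}(1)$-bundle, whose total space is $\Sph^{2m+1}$. The theorem produces an ancient solution $\widetilde g_{a,b}$ on $\Sph^{2m+1}$ that is type-I, collapsed, and of positive curvature operator. Since positive curvature operator forces $2$-positivity (the sum of the two smallest eigenvalues is positive) and positive complex sectional curvature, the solution lies inside precisely the pinching cones invoked in \cite{Ni-anc}, and positivity of the curvature operator is preserved along the flow by \cite{BW}; hence all the structural hypotheses of the classification are met, with the sole exception of $\kappa$-noncollapsing.

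It remains to see that the conclusion fails, i.e.\ that this solution is \emph{not} a spherical space form, and here I would argue straight from collapsing: a spherical space form evolves under Ricci flow as a homothetically shrinking round metric, which is $\kappa$-noncollapsed on all scales for some $\kappa>0$ in the sense of \cite{P-entropy}; our solution is collapsed by Theorem \ref{main1-u1}, so at no time can it be isometric, up to scaling, to a spherical space form. (As a reinforcing check, its curvature operator is block diagonal with a positive fiber eigenvalue $\tfrac{aq^{2}}{4b^{2}}$ distinct from the horizontal Fubini--Study eigenvalues, so it is genuinely non-Einstein.) Consequently, deleting the $\kappa$-noncollapsing assumption destroys the conclusion of \cite{Ni-anc}, which proves that this hypothesis cannot be removed. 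The only point demanding real care -- and the one I would verify most carefully -- is the containment in the \emph{specific} pinching families of \cite{Ni-anc}: one must confirm that positive curvature operator lands the solution inside exactly those cones (via positive CO $\Rightarrow$ $2$-positive CO and positive CO $\Rightarrow$ positive complex sectional curvature) so that the classification theorem would apply verbatim once non-collapsing is dropped, rather than merely placing it in some larger invariant cone to which the theorem does not speak.
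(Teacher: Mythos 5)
Your proposal is correct and follows exactly the route the paper intends: the paper derives the corollary as an immediate consequence of Theorem \ref{main1-u1}, noting that the collapsed, type-I, positive-curvature-operator ancient solution on $\Sph^{2m+1}$ cannot be a spherical space form precisely because Einstein metrics on spheres are non-collapsed. Your additional care about placing the example inside the specific pinching cones of \cite{Ni-anc} (positive curvature operator implying $2$-positivity and positive complex sectional curvature) is a correct and worthwhile elaboration of what the paper leaves implicit.
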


 \noindent {\sl Proof} (of Theorem \ref{main1-u1}). For simplicity we write $\widetilde{g}$ for $\widetilde{g}_{a, b}$.
 First, observe that  $F_{ij} = q \omega_{ij}$, hence
$F_{ij,k} = 0$ and
 $F_{ik}F_{jk} = q^2  \delta_{ij}$.
 The Riemannian curvature tensor can be simplified:
 \beqn
 \widetilde{R}_{i0 j 0} &=& \frac{q^2}{4} \frac{a}{b^2} \delta_{ij}, \CR
 \widetilde{R}_{ijk 0} &=& 0, \CR
 \widetilde{R}_{ijkl} &=& \frac{1}{b} R_{ijkl} - \frac{q^2}{4} \frac{a}{b^2} \left( 2 \omega_{ij} \omega_{kl}
 + \omega_{ik}\omega_{jl} - \omega_{il} \omega_{jk} \right).
 \eeqn
Hence, the Ricci curvature is given by
 \beqn
 \widetilde{R}_{00} &=& \frac{mq^2}{2} \frac{a}{b^2},  \CR
 \widetilde{R}_{i \alpha} &=& 0, \CR
 \widetilde{R}_{ij} &=& \left( \frac{p}{b}  - \frac{q^2}{2} \frac{a}{b^2}
 \right) \delta_{ij}.
 \eeqn
The Ricci tensor of $\widetilde{g}$ is of the form
 $
 \Ric(\widetilde{g}) = \frac{m q^2}{2}  \frac{a^2}{b^2} \theta \otimes \theta +
 \left( p - \frac{q^2}{2}
 \frac{a}{b} \right) g
 $
 and  the Ricci flow equation
 $
 \frac{\partial \widetilde{g}}{\partial \tau} = 2 \Ric (\widetilde{g})
 $
 (with $\tau=t_0-t$) is reduced to the following ODE system
 \begin{eqnarray} \label{typei-ode1}
 \frac{\de a}{\de \tau} &=&  m q^2 \frac{a^2}{b^2}, \\
 \frac{\de b}{\de \tau} &=& 2 p - q^2 \frac{a}{b}. \label{typei-ode2}
 \end{eqnarray}
 To solve this ODE, observe that
 there is a first integral of this system,
 \[
 \left( \frac{2p}{(m+1)q^2} - \frac{a}{b}  \right) a^{-\frac{m+1}{m}} = \Lambda^{\frac{m+1}{m}}
 \]
 where $\Lambda$ is a constant. To see this,  let $y=\frac{a}{b}$ and use (\ref{typei-ode1})  and (\ref{typei-ode2}) to obtain
 the equation \begin{equation}\label{typei-ode3}
 \frac{\de y}{\de \tau}=y\left((m+1)q^2y-2p\right)\frac{1}{b}.
 \end{equation}
The first integral is obtained by solving the equation that arises by dividing (\ref{typei-ode1}) and (\ref{typei-ode3}).

{\it Case 1}:  $\Lambda=0$. Then,
 \[
 \frac{a}{b} = \frac{2p}{(m+1)q^2} .
 \]
 By (\ref{typei-ode1}) and (\ref{typei-ode2}), if we require that $\lim_{\tau\to 0} a(\tau)=\lim_{\tau\to 0} b(\tau) =0$,
 \beqn
 a &=&  \frac{4m p^2}{(m+1)^2 q^2}  \tau, \CR
 b &=& \frac{2 m p}{m+1} \tau .
 \eeqn
 Hence, the metric $\widetilde{g}(\tau) = 2 \tau g_{e}$, where
 \[
 g_e = \frac{2m p^2}{(m+1)^2 q^2} \theta \otimes \theta + \frac{m p}{m+1} g
 \]
 is an Einstein metric such that
$
 \Ric(g_e) = g_e
$.  So, we obtain a trivial solution.

{\it Case 2}:  $\Lambda >0$. Then,
 \beq \label{e1}
 \frac{a}{b} = \frac{2p}{(m+1)q^2} - (\Lambda a)^{\frac{m+1}{m}}
 \eeq
 and (\ref{typei-ode1}) and (\ref{typei-ode2}) become
 \begin{eqnarray} \label{e2}
 & & \frac{\de a}{\de \tau} =  m q^2 \left( \frac{2p}{(m+1)q^2} - (\Lambda a)^{\frac{m+1}{m}}
 \right)^2, \\
 & & \frac{\de b}{\de \tau} = \frac{2 m p}{m+1} +  q^2 (\Lambda a)^{\frac{m+1}{m}}.\label{e5}
 \end{eqnarray}
 It is relatively easy to prove the long time existence of the solutions satisfying $\lim_{\tau \to 0} a(\tau)=\lim_{\tau \to 0} b(\tau)=0$. Due to (\ref{e1}), one only has to solve (\ref{e2}). Since $a$ and $b$ are increasing functions of $\tau$, (\ref{e1}) implies that  $a$ stays bounded from above  by a fixed number. Then, we conclude that (\ref{e2}) has global solution on $(0, \infty)$ by, say Theorem 7 of \cite{H}. In fact, the solution is also unique by the same result.

The case $\Lambda^{\frac{m+1}{m}}<0$ is not interesting for our consideration, since the solution to (\ref{e2}) will have  finite time blow-up.
 Next,  we check that the solution $\widetilde{g}_{a, b}$ is of type-I and collapsed.  Since
 \beq \label{e3}
 \frac{\de b}{\de \tau} \geq \frac{2 m p}{m+1},
 \eeq
we know that  $b \rightarrow \infty$, as $\tau \rightarrow \infty$. It follows from $(\ref{e1})$ that as $\tau\to \infty$,
  \beq \label{e4}
  a \rightarrow \frac{1}{\Lambda}\left( \frac{2p}{(m+1)q^2} \right)^{\frac{m}{m+1}}.
  \eeq
By the formula on the curvature, it is not hard to see that there exists $C$, depending only on  $p, q, m$, such that
  \[
  |\widetilde{\Rm}| \le \frac{C}{b}.
  \]
  Since $|\frac{\de b}{\de \tau}|$ is bounded from above, which implies that $\frac{b}{\tau}$ is bounded from above,
  we have that
  \[
   |\widetilde{\Rm}| \tau \le C'.
  \]
  This shows that the solution $\widetilde{g}_{a, b}$ is type-I.

   Finally, by
  $(\ref{e4})$,  the fibre $\mathsf{S}^1 $ of the bundle $P$ has length bounded  from above as $\tau
  \rightarrow \infty$, while the curvature goes to zero as $\tau \to \infty$ in the rate of $\frac{1}{\tau}$.  Hence, the metric $\widetilde{g}_{a, b}$ must be    collapsed.

  The last claim on the positivity of the curvature operator follows from the proposition below, (\ref{e1}), $a>0$, $\Lambda>0$ and the fact that $\frac{2}{m+1}<\frac{4}{2m+1}$.\qed

  \begin{proposition}
 Let $g$ be a multiple of the Fubini-Study metric on $\mathbb{C}P^m$ with $\Ric(g) = p g$. Let  $\omega$ be its K\"ahler form. Assume that  $P$ is a $\mathsf{U}(1)$-bundle with connection $\theta$ such that
$
\de \theta = q \omega,
$
for some $q$.
Then the curvature operator of the metric $\widetilde{g}_{a, b} = a \theta\otimes \theta + b g$
 on $P$ is positive if and only if
 \[
 \frac{a}{b} < \frac{4}{2m+1} \frac{p}{q^2}.
 \]
 \end{proposition}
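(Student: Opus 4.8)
The plan is to compute the full curvature operator of $\widetilde g_{a,b}$ explicitly in a suitable orthonormal frame adapted to the complex structure, and then determine exactly when the associated quadratic form on $\Lambda^2$ is positive definite. The key simplification, already recorded in the proof of Theorem \ref{main1-u1}, is that for the Fubini--Study case we have $F_{ij}=q\omega_{ij}$, so $F_{ij,k}=0$ and the curvature components reduce to
\[
\widetilde{R}_{i0j0} = \frac{q^2}{4}\frac{a}{b^2}\delta_{ij}, \qquad
\widetilde{R}_{ijk0} = 0, \qquad
\widetilde{R}_{ijkl} = \frac{1}{b}R_{ijkl} - \frac{q^2}{4}\frac{a}{b^2}\left(2\omega_{ij}\omega_{kl}+\omega_{ik}\omega_{jl}-\omega_{il}\omega_{jk}\right).
\]
Here the index $0$ denotes the fiber direction and $1,\dots,2m$ the horizontal directions. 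The upshot is that the curvature operator splits into a ``mixed'' block spanned by the $2$-vectors $e_i\wedge e_0$ and a ``horizontal'' block spanned by $e_i\wedge e_j$, and these two blocks do not interact, since $\widetilde R_{ijk0}=0$. So I would diagonalize each block separately.

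First I would treat the mixed block. From $\widetilde R_{i0j0}=\frac{q^2}{4}\frac{a}{b^2}\delta_{ij}$, the operator restricted to $\mathrm{span}\{e_i\wedge e_0\}$ is simply $\frac{q^2}{4}\frac{a}{b^2}$ times the identity, which is manifestly positive whenever $a>0$. So the mixed directions impose no constraint. The real content is in the horizontal block, which is the curvature operator of the rescaled Fubini--Study metric $\frac1b R_{ijkl}$ corrected by the term $-\frac{q^2}{4}\frac{a}{b^2}(2\omega_{ij}\omega_{kl}+\omega_{ik}\omega_{jl}-\omega_{il}\omega_{jk})$. To analyze this I would decompose $\Lambda^2(\mathbb R^{2m})$ using the complex structure $J$: write $\Lambda^2 = \mathbb R\,\omega \oplus \Lambda^2_0{}^{+}\oplus\Lambda^2{}^{-}$, where $\omega$ is the K\"ahler form, $\Lambda^2_0{}^+$ is the orthogonal complement of $\omega$ inside the $J$-invariant ($(1,1)$-primitive) $2$-forms, and $\Lambda^2{}^-$ is the $J$-anti-invariant part. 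The point is that the Fubini--Study curvature operator acts as a scalar on each of these pieces (this is the statement that $\mathbb{C}P^m$ is a complex space form / symmetric space: its curvature operator has only two distinct eigenvalues on the primitive part plus a large eigenvalue on $\omega$), and the correction term $2\omega\otimes\omega+(\text{identity-type terms})$ is also diagonal with respect to this decomposition.

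The main work, and the main obstacle, will be pinning down the precise eigenvalues of the Fubini--Study curvature operator on each of the three summands and correctly evaluating the algebraic symbol $2\omega_{ij}\omega_{kl}+\omega_{ik}\omega_{jl}-\omega_{il}\omega_{jk}$ as an operator on $\Lambda^2$. Concretely, for $g$ with $\Ric(g)=pg$ the holomorphic sectional curvature fixes the normalization, and one finds that on $\mathbb R\omega$ the correction eats up a factor proportional to $(2m+1)$ (hence the asymmetry between the $\frac{2}{m+1}$ and $\frac{4}{2m+1}$ thresholds seen at the end of the theorem), whereas on the primitive $(1,1)$ part and on the anti-invariant part the coefficients differ. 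After computing each eigenvalue of the horizontal block as an affine function of $\frac{a}{b}$ (with $\frac1b$ an overall positive prefactor that does not affect the sign condition), positivity of the whole operator becomes the requirement that the smallest such eigenvalue be positive. I expect the binding constraint to come from the summand on which the FS curvature is smallest relative to the correction, yielding exactly
\[
\frac{a}{b} < \frac{4}{2m+1}\,\frac{p}{q^2},
\]
and I would check that all other eigenvalue positivity conditions are implied by this one (in particular the $\omega$-direction, governed by the weaker threshold $\frac{2}{m+1}$, is automatically satisfied). Conversely, at equality the minimal eigenvalue vanishes, and beyond it that eigenvalue turns negative, giving the ``only if'' direction.
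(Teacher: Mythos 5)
Your strategy is exactly the paper's: decouple the mixed block $\{e_i\wedge e_0\}$ (on which the operator is $\frac{q^2a}{4b^2}$ times the identity, since $\widetilde R_{ijk0}=0$) from the horizontal block, note that on the horizontal block $\widetilde{\Rm}$ is a combination of the two algebraic symbols $\delta_{ik}\delta_{jl}-\delta_{il}\delta_{jk}$ and $2\omega_{ij}\omega_{kl}+\omega_{ik}\omega_{jl}-\omega_{il}\omega_{jk}$, and diagonalize the latter via the $J$-decomposition of $\Lambda^2$ into $\mathbb{R}\omega$, the primitive $(1,1)$ part, and the $J$-anti-invariant part, with eigenvalues $2m+1$, $1$, $-1$ and multiplicities $1$, $m^2-1$, $m^2-m$. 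The paper then reads off the eigenvalues of $\widetilde{\Rm}$ as $\frac{p}{b}\left(1-\frac{2m+1}{4}\frac{q^2}{p}\frac{a}{b}\right)$, $\frac{p}{(m+1)b}\left(1-\frac{m+1}{4}\frac{q^2}{p}\frac{a}{b}\right)$ and $\frac{q^2a}{4b^2}$ (this last shared by the anti-invariant horizontal part and the entire mixed block), and positivity of all three is the stated inequality.

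There is, however, a concrete error in the one structural prediction you make about the outcome: you claim the $\omega$-direction is \emph{not} binding and is ``governed by the weaker threshold $\frac{2}{m+1}$,'' hence automatically satisfied. This is backwards. The $\omega$-direction is precisely the binding direction: its eigenvalue $\frac{p}{b}\left(1-\frac{2m+1}{4}\frac{q^2}{p}\frac{a}{b}\right)$ is positive exactly when $\frac{a}{b}<\frac{4}{2m+1}\frac{p}{q^2}$, while the primitive $(1,1)$ part only requires the strictly weaker bound $\frac{a}{b}<\frac{4}{m+1}\frac{p}{q^2}$ and the remaining eigenvalue is always positive. Moreover $\frac{2}{m+1}$ is not a curvature-positivity threshold at all (and since $\frac{2}{m+1}=\frac{4}{2m+2}<\frac{4}{2m+1}$ it would be a \emph{stronger}, not weaker, constraint if it were): it is the bound $\frac{a}{b}<\frac{2p}{(m+1)q^2}$ forced by the first integral of the ODE system in Theorem \ref{main1-u1}, and the inequality $\frac{2}{m+1}<\frac{4}{2m+1}$ is invoked there only to conclude that the ancient solution satisfies the hypothesis of this proposition. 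Beyond this misidentification, the eigenvalue computation itself --- which is the entire content of the proof --- is deferred in your write-up and still needs to be carried out.
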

\begin{proof}
 Let $\{e_1, \cdots, e_{2m} \}$ be an orthonormal tangent
 vector of $(\mathbb{C}P^m,g)$ and $e_{m+k} = Je_k$, $1 \leq k \leq m$. Denote $\omega_{ij} = \omega(e_i,
e_j)$. By O'Neill's formula, the Riemannian curvature of $g$ is given by
 \[
 R_{ijkl} = \frac{p}{2m+2} \left( \delta_{ik} \delta_{jl} -\delta_{il} \delta_{jk} + 2
 \omega_{ij} \omega_{kl} + \omega_{ik} \omega_{jl} - \omega_{il} \omega_{jk}
\right).
 \]
 Hence,
 \beqn
 \widetilde{R}_{ijkl} &=&  \frac{p}{2m+2} \frac{1}{b} \left( \delta_{ik} \delta_{jl} -\delta_{il} \delta_{jk}
\right)  \CR
 & & + \left(  \frac{p}{2m+2} \frac{1}{b} - \frac{q^2}{4} \frac{a}{b^2} \right) \left( 2 \omega_{ij} \omega_{kl}
 + \omega_{ik}\omega_{jl} - \omega_{il} \omega_{jk} \right) \CR
 &=& \frac{p}{2m+2} \frac{1}{b} \left( \delta_{ik} \delta_{jl} -\delta_{il} \delta_{jk}
\right)  \CR
 & & + \frac{p}{2m+2} \frac{1}{b} \left(  1 - \frac{m+1}{2} \frac{q^2}{p} \frac{a}{b} \right) \left( 2 \omega_{ij} \omega_{kl}
 + \omega_{ik}\omega_{jl} - \omega_{il} \omega_{jk} \right).
 \eeqn
 As an algebraic curvature operator (namely a symmetric tensor of $\wedge^2(TP)$), the eigenvalues of $2 \omega_{ij} \omega_{kl} + \omega_{ik}\omega_{jl} - \omega_{il} \omega_{jk}$  are given by the following
table:
 \[
 \begin{tabular}{|c|c|c|} \hline
 eigenvalues & multiplicities  &   eigenvectors      \CR \hline
 $2m+1$  &  $1$   & $\sum_{i=1}^m e_i \wedge e_{m+i}$   \CR \hline
 $1$  &  $m^2-1$   & $\begin{matrix} e_i \wedge e_{m+i} - \frac{1}{m} \left(\sum_{i=1}^m e_i \wedge e_{m+i}\right), \CR e_i \wedge e_j + e_{m+i} \wedge e_{m+j},  \CR
 e_i \wedge  e_{m+j} - e_{m+i} \wedge e_j,  \end{matrix}$  \CR \hline
 $-1$  &  $m^2-m$   & $\begin{matrix} e_i \wedge e_j - e_{m+i} \wedge e_{m+j},  \CR
 e_i \wedge  e_{m+j} + e_{m+i} \wedge e_j,  \end{matrix}$    \CR \hline
 \end{tabular}
 \]
 where $1 \leq i \neq j \leq m$. It follows that the eigenvalues of the curvature operator $\widetilde{\Rm}$ are given by
 \[
 \frac{p}{b} \left( 1 - \frac{2m+1}{4} \frac{q^2}{p} \frac{a}{b} \right), ~~~\frac{p}{m+1} \frac{1}{b} \left( 1 -  \frac{m+1}{4} \frac{q^2}{p} \frac{a}{b}
 \right), ~~~ \frac{q^2}{4} \frac{a}{b^2}
 \]
 with multiplicities $1$, $m^2-1$ and $m^2+m$, respectively.
 Clearly, all of these eigenvalues are positive if and only if
 \[
 \frac{a}{b} < \frac{4}{2m+1} \frac{p}{q^2}.
 \]
 \end{proof}

 The convergence theorem of B\"ohm-Wilking \cite{BW} has the following manifestation.
 \begin{proposition} Let $\de s^2_{\Lambda}(\tau)=a_{\Lambda}(\tau)\theta\otimes \theta +b_{\Lambda}(\tau) g$ be the metric as in Theorem \ref{main1-u1}. Then, $\frac{1}{\Lambda} \de s^2_{\Lambda}(\Lambda \tau) \to 2\tau\,  g_{e}$, where $g_{e}$ is the trivial ancient solution (Einstein metric) in the proof of Theorem \ref{main1-u1},  as $\Lambda\to 0$.
 \end{proposition}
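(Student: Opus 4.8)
The plan is to reduce the statement to the convergence of the two coefficient functions, since $\de s^2_\Lambda(\tau)=a_\Lambda(\tau)\,\theta\otimes\theta+b_\Lambda(\tau)\,g$ is a fixed linear combination of the $\tau$-independent tensors $\theta\otimes\theta$ and $g$. Writing $\hat a_\Lambda(\tau):=\frac1\Lambda a_\Lambda(\Lambda\tau)$ and $\hat b_\Lambda(\tau):=\frac1\Lambda b_\Lambda(\Lambda\tau)$, and recalling $g_e=\frac{2mp^2}{(m+1)^2q^2}\theta\otimes\theta+\frac{mp}{m+1}g$, it suffices to prove that, uniformly on every compact $\tau$-interval,
\[
\hat a_\Lambda(\tau)\longrightarrow \frac{4mp^2}{(m+1)^2q^2}\,\tau,\qquad \hat b_\Lambda(\tau)\longrightarrow \frac{2mp}{m+1}\,\tau\qquad(\Lambda\to 0),
\]
these being exactly the coefficients of $2\tau g_e$. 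Since the system (\ref{typei-ode1})--(\ref{typei-ode2}) is invariant under the parabolic rescaling $a(\tau)\mapsto\frac1\Lambda a(\Lambda\tau)$, the pair $(\hat a_\Lambda,\hat b_\Lambda)$ again solves (\ref{typei-ode1})--(\ref{typei-ode2}) with the same initial condition $\hat a_\Lambda(0)=\hat b_\Lambda(0)=0$, while substituting $a_\Lambda(\Lambda\tau)=\Lambda\hat a_\Lambda$, $b_\Lambda(\Lambda\tau)=\Lambda\hat b_\Lambda$ into the first integral (\ref{e1}) turns it into the algebraic relation
\[
\frac{\hat a_\Lambda}{\hat b_\Lambda}=\frac{2p}{(m+1)q^2}-\bigl(\Lambda^2\,\hat a_\Lambda\bigr)^{\frac{m+1}{m}}.
\]
I set $c_0:=\frac{2p}{(m+1)q^2}$ for brevity.

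The key step is a uniform-in-$\Lambda$ a priori bound. From the displayed relation the subtracted term is nonnegative, so $0\le \hat a_\Lambda/\hat b_\Lambda\le c_0$ throughout; feeding this into (\ref{typei-ode1}) and integrating from $\tau=0$ gives $0\le\hat a_\Lambda(\tau)\le mq^2c_0^2\,\tau$, a bound independent of $\Lambda$, while (\ref{typei-ode2}) together with $0\le\hat a_\Lambda/\hat b_\Lambda\le c_0$ gives $\frac{2mp}{m+1}\tau\le\hat b_\Lambda(\tau)\le 2p\tau$. Consequently, on any $[0,T]$ one has $\Lambda^2\hat a_\Lambda(\tau)\le \Lambda^2 mq^2c_0^2T\to 0$ uniformly as $\Lambda\to 0$, so the algebraic relation forces $\hat a_\Lambda/\hat b_\Lambda\to c_0$ uniformly on $[0,T]$. (Note $mq^2c_0^2=\frac{4mp^2}{(m+1)^2q^2}$ and $2p-q^2c_0=\frac{2mp}{m+1}$.)

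Finally I would pass to the limit in the integral form of the equations. Since $\hat a_\Lambda(0)=\hat b_\Lambda(0)=0$,
\[
\hat a_\Lambda(\tau)=\int_0^\tau mq^2\Bigl(\frac{\hat a_\Lambda}{\hat b_\Lambda}\Bigr)^2\de s,\qquad \hat b_\Lambda(\tau)=\int_0^\tau\Bigl(2p-q^2\frac{\hat a_\Lambda}{\hat b_\Lambda}\Bigr)\de s,
\]
and the uniform convergence $\hat a_\Lambda/\hat b_\Lambda\to c_0$ lets me pass the limit through both integrals, yielding $\hat a_\Lambda(\tau)\to mq^2c_0^2\,\tau=\frac{4mp^2}{(m+1)^2q^2}\tau$ and $\hat b_\Lambda(\tau)\to(2p-q^2c_0)\tau=\frac{2mp}{m+1}\tau$, uniformly on compacta. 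This is precisely $2\tau g_e$, which completes the argument. I expect the only real obstacle to be the uniform a priori bound of the previous paragraph: everything hinges on the fact that the correction term $(\Lambda^2\hat a_\Lambda)^{(m+1)/m}$ in the first integral is nonnegative, which bounds $\hat a_\Lambda$ by a multiple of $\tau$ that does not depend on $\Lambda$ and thereby kills $\Lambda^2\hat a_\Lambda$ in the limit; once that uniform control is secured, the passage to the limit is routine.
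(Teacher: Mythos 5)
Your argument is correct, and it is genuinely more self-contained than the paper's. The paper disposes of this proposition in two lines: it observes that, by uniqueness, $\frac{1}{\Lambda}a_\Lambda(\Lambda\tau)$ and $\frac{1}{\Lambda}b_\Lambda(\Lambda\tau)$ coincide with $a_{\Lambda^2}(\tau)$ and $b_{\Lambda^2}(\tau)$ (exactly the rescaled first integral you wrote down), and then appeals to smooth dependence of the solutions of (\ref{e2}), (\ref{e5}) on the parameter $\Lambda$, with $\Lambda=0$ giving the Einstein solution. You identify the same scaling structure but then replace the appeal to parameter dependence by a direct limit argument: the nonnegativity of $(\Lambda^2\hat a_\Lambda)^{(m+1)/m}$ in the first integral gives the $\Lambda$-independent bounds $0\le\hat a_\Lambda/\hat b_\Lambda\le c_0$ and $\hat a_\Lambda(\tau)\le mq^2c_0^2\tau$, which kill the correction term uniformly on compacta and let you pass to the limit in the integral form of the equations. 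What your version buys is that it sidesteps the one slightly delicate point in the paper's argument, namely that the continuous-dependence theorem is being invoked for an initial value problem posed at $\tau=0$, where $a=b=0$ and the right-hand side $mq^2a^2/b^2$ is of indeterminate form; your a priori bounds $\frac{2mp}{m+1}\tau\le\hat b_\Lambda(\tau)\le 2p\tau$ make the behavior near $\tau=0$ completely explicit. The arithmetic checks out ($mq^2c_0^2=\frac{4mp^2}{(m+1)^2q^2}$ and $2p-q^2c_0=\frac{2mp}{m+1}$ are the coefficients of $2\tau g_e$), so the proof is complete as written.
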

 \begin{proof} Observe that by the uniqueness of the ODE,  $\frac{1}{\Lambda}a(\Lambda \tau)$ and $\frac{1}{\Lambda} b(\Lambda \tau)$ are equal to $a_{\Lambda^2}(\tau)$ and $b_{\Lambda^2}(\tau)$. Hence, the result follows from the smooth dependence of the solutions of the ODE system (\ref{e2}), (\ref{e5}) on the parameter $\Lambda$. \end{proof}

\begin{remark} When $m=1$, the ODE system can be solved explicitly. First $(\ref{e1})$ and $(\ref{e2})$ are reduced to
$$
 \frac{\de a}{\de \tau} =   q^2 \left( \frac{p}{q^2} - (\Lambda
 a)^2  \right)^2
 \quad \quad \mbox{
and } \quad
 \frac{a}{b} = \frac{p}{q^2} - (\Lambda a)^2.
 $$
 Let $\Lambda^2 =  \frac{p}{q^2} \nu^2$. Then,
$$
 \frac{\de a}{\de \tau} =   \frac{p^2}{q^2} \left( 1 -
 \nu^2 a^2  \right)^2
 \quad \quad \mbox{
 and }\quad
 \frac{a}{b} = \frac{p}{q^2} ( 1 -
 \nu^2 a^2  ).
 $$
 Set $a = \frac{1}{\nu} \tanh \xi$. Then, $b = \frac{1}{\nu} \frac{q^2}{p} \sinh
 \xi \cosh \xi$ and $\xi$ is the function of $\tau$ determined by
 \[
 \xi + \frac{1}{2} \sinh (2 \xi) = \frac{p^2}{q^2} 2 \nu \tau.
 \]
\end{remark}

Theorem \ref{main1-u1}, in particular, can be applied to the Hopf fibration $\mathsf{S}^1 \to \Sph^{2m+1}\to \CP^m$. This corresponds to $p=2(m+1)$, $q=-1$. Then, $\widetilde{g}_{a, b}(\tau)$ is an ancient solution on $\Sph^{2m+1}$, which is type-I, has positive curvature operator and it is collapsed.

\section{Ancient solutions on principle $\mathsf{SU}(2)$-bundles over quaternion-K\"ahler manifolds with positive scalar curvature}

In this section we shall generalize the construction of the previous section by allowing the fiber to be $\mathsf{SU}(2)$. However, to make it work, we have to restrict the base manifolds to the quaternion-K\"ahler ones. We first recall some basic properties of quaternion-K\"ahler
 manifolds for completeness. These properties were proved by
 Berger  \cite{berger} (see also \cite{ishi} by Ishihara and Chapter 14 of \cite{Besse}). A quaternion-K\"ahler manifold $(M,g)$ is a Riemannian manifold with a rank
 $3$ vector bundle $V \subset \operatorname{End}(TM)$ satisfying:
 \begin{enumerate}
 \item[$(a)$] In any coordinate neighborhood $U$ of $M$, there exists
 a local basis $\{I,J,K\}$ of $V$ such that
  \beqn
  & & I^2=J^2=K^2 = - \id, \quad \quad
   IJ=-JI = K, \CR
  & & JK = -KJ =I, \quad \quad \quad
   KI=-IK =J
  \eeqn
 and
 \[
 \langle I(X),I(Y)\rangle =\langle J(X),J(Y)\rangle =\langle K(X),K(Y)\rangle =\langle X,Y\rangle
 \]
 for all $X,Y \in TM$.
 \item[$(b)$] If $\phi \in \Gamma(V)$, then $\nabla_X \phi \in
 \Gamma(V)$ for all $X \in TM$.
 \end{enumerate}

 It follows from $(a)$ that $\dim M = 4m$. The condition $(b)$ implies that
 there are local $1$-forms $\sigma_1,\sigma_1,\sigma_3$ such that
  \[
  (\nabla I, \nabla J, \nabla K ) = (I, J,K) \cdot \left(\begin{matrix} 0 & -\sigma_3 & \sigma_2 \cr \sigma_3 & 0 & -\sigma_1 \cr -\sigma_2 & \sigma_1 & 0
 \end{matrix} \right).
  \]
 Let $\omega_1, \omega_2, \omega_3$ be three $2$-forms defined by
  \beqn
  \omega_1(\cdot, \cdot) \doteqdot \langle \cdot, I (\cdot)\rangle,  \quad \quad
  \omega_2(\cdot, \cdot) \doteqdot \langle \cdot, J (\cdot)\rangle, \quad \quad
  \omega_3(\cdot, \cdot) \doteqdot \langle \cdot, K (\cdot)\rangle
  \eeqn
 and let  $\Omega$ be a $4$-form defined by
 \[
 \Omega = \omega_1 \wedge \omega_1 +  \omega_2 \wedge \omega_2 + \omega_3 \wedge
 \omega_3 .
 \]
 The condition $(b)$ is equivalent to $\Omega$ being parallel, that is
 $
 \nabla_X \Omega = 0
 $
 for any $X \in TM$.

 The curvature properties of quaternion-K\"ahler manifold can be summarized
 in the following theorem. See \cite{ishi} or Chapter 14 of \cite{Besse} for a proof.

  \begin{theorem}
  If $(M^{4m},g)$ is a quaternion-K\"ahler manifold and $m \geq 2$,
  then $(M^{4m},g)$ is Einstein, that is, there is a constant
  $p$ such that
 $
  Ric(g) = p g.
  $
  Moreover,
  $$
  \de \sigma_1 + \sigma_2 \wedge \sigma_3 = \frac{p}{m+2} \omega_1, \quad \quad
  \de \sigma_2 + \sigma_3 \wedge \sigma_1 = \frac{p}{m+2} \omega_2, \quad \quad
  \de \sigma_3 + \sigma_1 \wedge \sigma_2 = \frac{p}{m+2} \omega_3.
  $$
  \end{theorem}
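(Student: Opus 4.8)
The plan is to push Cartan's structure equations through the quaternionic data $\{\sigma_1,\sigma_2,\sigma_3\}$ and $\{\omega_1,\omega_2,\omega_3\}$, reducing both assertions to a single algebraic statement about the curvature. First I would differentiate the fundamental $2$-forms. Since $\omega_1(\cdot,\cdot)=\langle\cdot,I(\cdot)\rangle$ and $\nabla g=0$, the defining relation for $(\nabla I,\nabla J,\nabla K)$ gives $\nabla_X\omega_1=\sigma_3(X)\,\omega_2-\sigma_2(X)\,\omega_3$ and its cyclic companions; antisymmetrizing over the torsion-free connection yields
\[
\de\omega_1=\sigma_3\wedge\omega_2-\sigma_2\wedge\omega_3
\]
together with the identities obtained by permuting $(1,2,3)$ cyclically. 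Setting $\Theta_1=\de\sigma_1+\sigma_2\wedge\sigma_3$ (and cyclically), I would then apply $\de$ once more and substitute the expressions for $\de\omega_2,\de\omega_3$. The cubic $\sigma\wedge\sigma\wedge\omega$ terms cancel in pairs, and $\de^2\omega_1=0$ collapses to the integrability relation $\Theta_3\wedge\omega_2=\Theta_2\wedge\omega_3$, with $\Theta_1\wedge\omega_3=\Theta_3\wedge\omega_1$ and $\Theta_2\wedge\omega_1=\Theta_1\wedge\omega_2$ following cyclically.

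The geometric meaning of the $\Theta_i$ is that they are the entries of the curvature $F=\de A+A\wedge A$ of the connection matrix $A$ in condition $(b)$; equivalently, because $\nabla$ preserves the quaternionic structure, the component of the Riemann curvature $R(X,Y)$ lying in the $\mathfrak{sp}(1)$-summand of $\mathfrak{sp}(m)\oplus\mathfrak{sp}(1)\subset\so(TM)$ is $\Theta_1(X,Y)I+\Theta_2(X,Y)J+\Theta_3(X,Y)K$. The heart of the proof --- and the step where $m\ge2$ is indispensable --- is to upgrade the integrability relations to $\Theta_i=\lambda\,\omega_i$ for a single function $\lambda$. A priori the pair symmetry $R_{abcd}=R_{cdab}$ and the holonomy reduction only force each $\Theta_i$ to split into a piece in $\mathrm{span}\{\omega_1,\omega_2,\omega_3\}$ and a piece valued in the $\mathfrak{sp}(m)$-forms. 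I would invoke the first Bianchi identity: for an algebraic curvature tensor valued in $\mathfrak{sp}(m)\oplus\mathfrak{sp}(1)$, Bianchi annihilates the mixed $\mathfrak{sp}(1)$--$\mathfrak{sp}(m)$ block and forces the $\mathfrak{sp}(1)$--$\mathfrak{sp}(1)$ block to be scalar. This is the $\mathrm{Sp}(m)\mathrm{Sp}(1)$-decomposition of curvature tensors (Berger; see Chapter 14 of \cite{Besse}), and it can also be checked by hand by feeding the integrability relations into the linear algebra of $\Lambda^4\mathbb{R}^{4m}$, where for $m\ge2$ the cross-plane $4$-forms $\omega_i^{(\alpha)}\wedge\omega_j^{(\beta)}$, $\alpha\ne\beta$, are abundant enough to leave no freedom. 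For $m=1$ the $\omega_i$ merely span the self-dual $2$-forms, the integrability relations become vacuous, and the conclusion fails; this is exactly why the hypothesis is needed. I expect this to be the main obstacle.

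Granting $\Theta_i=\lambda\,\omega_i$, the remainder is a contraction followed by a constancy argument. The $\mathfrak{sp}(m)$-part of the curvature is algebraically Ricci-free (the statement that hyperk\"ahler curvatures are Ricci-flat), so $\Ric(g)$ is produced entirely by the scalar $\mathfrak{sp}(1)$-part; contracting it against $g$ --- with the overall constant fixed by the model $\HP^m$, where $\Theta_i=\omega_i$ forces $\Ric=(m+2)g$, and using orthogonality identities such as $\langle I_i Z, I_i Y\rangle=\langle Z,Y\rangle$ --- yields $\Ric(g)=(m+2)\,\lambda\,g$. Hence $g$ is Einstein with $p=(m+2)\lambda$. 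Finally $p$ is constant: either by Schur's lemma in dimension $4m\ge3$, or directly from the second Bianchi identity $\de\Theta_1=\sigma_3\wedge\Theta_2-\sigma_2\wedge\Theta_3$, which upon substituting $\Theta_i=\lambda\omega_i$ reduces to $\de\lambda\wedge\omega_i=0$ and hence $\de\lambda=0$, since wedging a $1$-form with the symplectic form $\omega_i$ is injective in dimension $\ge4$. This gives $\lambda=\tfrac{p}{m+2}$ with $p$ the constant Einstein scalar, which is precisely $\de\sigma_i+\sigma_j\wedge\sigma_k=\tfrac{p}{m+2}\,\omega_i$.
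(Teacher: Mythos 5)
The paper does not prove this statement: it is recalled as a classical result of Berger, with the proof deferred to \cite{berger}, \cite{ishi} and Chapter 14 of \cite{Besse}, so there is no in-paper argument to compare against. Your outline is essentially the standard proof from those references and its individual steps check out: $\de\omega_1=\sigma_3\wedge\omega_2-\sigma_2\wedge\omega_3$ and its cyclic companions follow from the given formula for $(\nabla I,\nabla J,\nabla K)$, the relation $\Theta_3\wedge\omega_2=\Theta_2\wedge\omega_3$ does drop out of $\de^2\omega_1=0$, the second Bianchi identity $\de\Theta_1=\sigma_3\wedge\Theta_2-\sigma_2\wedge\Theta_3$ is correct, and the Lefschetz injectivity of $\wedge\,\omega_i$ on $1$-forms gives $\de\lambda=0$. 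The one place where you invoke rather than prove is the crux you yourself flag: that for $m\ge 2$ the first Bianchi identity forces an algebraic curvature tensor with values in $\mathfrak{sp}(m)\oplus\mathfrak{sp}(1)$ to have vanishing mixed block and scalar $\mathfrak{sp}(1)$--$\mathfrak{sp}(1)$ block (Alekseevskii's decomposition, Besse 14.44); be aware that the three wedge relations from $\de^2\omega_i=0$ alone are weaker than this and would not by themselves yield $\Theta_i=\lambda\omega_i$, so the representation-theoretic lemma (or Ishihara's equivalent contraction of the Ricci identity $[R(X,Y),I_i]$ against the first Bianchi identity) genuinely has to be carried out. With that lemma granted, the rest of your argument, including fixing the constant $m+2$ on the model $\HP^m$ and obtaining constancy of $p$ either by Schur or by the second Bianchi route, is sound.
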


  Let $P_0$ be the $\mathsf{SU}(2)$-principle bundle associated with the rank $3$ bundle $V$. In the following we assume that $p>0$. One can
 identify the Lie algebra $\mathfrak{su}(2)$ with $\mathbb{R} \{{\bf i}, {\bf j},{\bf k} \}$, where
 ${\bf i,j,k}$ are the canonical quaternionic numbers. Then,
 \[
 A = \frac{1}{2} ( \sigma_1 {\bf i} + \sigma_2 {\bf j} + \sigma_3 {\bf k} )
 \]
 defines a connection on $P_0$, and by the above theorem, the curvature $F_A$ of
 $A$ is given by
 \[
 F_A = \frac{p}{2(m+2)} (\omega_1  {\bf i} + \omega_2 {\bf j} +  \omega_3 {\bf k} ).
 \]

  Now we use the notation and computations of Section 2.2 to vary the connection metric on a principle bundle $\mathsf{SU}(2)$-principle bundle $P$ over a quaternion-K\"ahler manifold
 $M^{4m}$ with a connection $A$ so that its curvature $F_A$ satisfies
 \begin{equation}\label{qk-p-curv}
 F_A = q (\omega_1  {\bf i} + \omega_2 {\bf j} +  \omega_3 {\bf k} ).
 \end{equation}
Since  the structure constant $C^\alpha_{\beta \gamma}$ is totally skew-symmetric and
 $C^1_{23}=2$, we have that
  \[
  \sum_{\gamma, \sigma} C^{\gamma}_{\alpha \sigma} C_{\beta
  \sigma}^{\gamma} = 8 \delta_{\alpha \beta}.
  \]
 Using formulae from Section 2.2,  the  curvature $F_{ij}^{\alpha}$ is computed as
 \beqn
 F_{ij, k}^{\alpha} &=& 0, \CR
 \sum_{i,j} F_{ij}^{\alpha} F_{ij}^{\beta} &=& 4 m q^2 \delta_{\alpha \beta}, \CR
\sum_{k, \alpha} F_{ik}^{\alpha}
  F_{jk}^{\alpha} &=& 3 q^2 \delta_{ij}.
 \eeqn
 Therefore by (\ref{principle-ricci}) the Ricci curvature of the connection metric $\widetilde{g}_{a, b}$ (which is defined in Section 2.2) on the total space of $\mathsf{SU}(2)$-principle bundle simplifies to
  \beqn
  \widetilde{R}_{\alpha \beta} &=& \frac{1}{a} \left( 2+ mq^2 \frac{a^2}{b^2} \right) \delta_{\alpha \beta} ,\CR
  \widetilde{R}_{i \alpha} &=& 0, \CR
  \widetilde{R}_{ij} &=& \frac{1}{b} \left( p- \frac{3q^2}{2} \frac{a}{b} \right) \delta_{ij}.
  \eeqn
It follows that the Ricci flow equation
 $
 \frac{\partial \widetilde{g}_{a, b}(\tau)}{\partial \tau} = 2 \Ric (\widetilde{g}_{a, b}(\tau)),
 $ where $\tau=t_0-t$, is reduced to the  ODE system
 \begin{eqnarray}\label{qk-ode1}
 \frac{\de a}{\de \tau} &=&  4+ 2mq^2 \frac{a^2}{b^2}, \\
 \frac{\de b}{\de \tau} &=& 2 p - 3q^2 \frac{a}{b}. \label{qk-ode2}
 \end{eqnarray}
 To solve this system, let $y= \frac{a}{b}$. Also, let $y' = \frac{\de y}{\de \tau}$ (and similarly for $b'$). Then,
 \begin{equation}\label{ode-y}
y' = \frac{1}{b} \left(4 -2p y +(2m+3)q^2 y^2 \right)
 \end{equation}
 and
 \[
\frac{y'}{b'} = \frac{1}{b} \frac{4 -2p y +(2m+3)q^2 y^2}{2 p - 3q^2 y}.
 \]
 Separating $y$ and $b$, one gets
 \[
 \frac{2 p - 3q^2 y}{4 -2p y +(2m+3)q^2 y^2} y' = \frac{b'}{b}.
 \]
 Notice that, if $p>\sqrt{4(2m+3)q^2}$,
 \[
 \frac{2 p - 3q^2 y}{4 -2p y +(2m+3)q^2 y^2} = \frac{\alpha_1}{y-r_1} -
\frac{\alpha_2}{y-r_2} ,
 \]
 where
 \beqn
 r_1 &=& \frac{p+ \sqrt{p^2-4(2m+3)q^2}}{(2m+3)q^2}, \quad \quad \quad \quad \quad \quad
 r_2 = \frac{p- \sqrt{p^2-4(2m+3)q^2}}{(2m+3)q^2}, \CR
 \alpha_1 &=& \frac{(4m+3)p -3 \sqrt{p^2-4(2m+3)q^2}}{2(2m+3)\sqrt{p^2-4(2m+3)q^2}}, \quad \quad
 \alpha_2 = \frac{(4m+3)p +3 \sqrt{p^2-4(2m+3)q^2}}{2(2m+3)\sqrt{p^2-4(2m+3)q^2}}.
 \eeqn
It follows that the above ODE system of $a$ and $b$ has a complete integral
\begin{equation}\label{qk-int}
 \left|  \frac{a}{b}-r_1 \right|^{\alpha_1}
 \left| \frac{a}{b} -r_2\right|^{-\alpha_2} b^{-1} =
 \Lambda,
 \end{equation}
 where $\Lambda\ge 0$ is a constant.

In the special case that the $\mathsf{SU}(2)$-principle bundle is just the two-fold lift of the principle bundle associated with $V$ (namely $P=P_0$) and $A=\frac{1}{2} ( \sigma_1 {\bf i} + \sigma_2 {\bf j} + \sigma_3 {\bf k} )$, which implies that
$
 q= \frac{p}{2(m+2)},
$
 the constants $r_1, r_2, \alpha_1, \alpha_2$ take the simple form:
 \beqn
 r_1 &=& \frac{4(m+2)}{p},  \quad \quad
 r_2 = \frac{4(m+2)}{(2m+3)p} , \CR
 \alpha_1 &=& \frac{2m+1}{2(m+1)}, \quad \quad
 \alpha_2 = \frac{4m^2+14m+9}{2(m+1)(2m+3)}.
 \eeqn

 When  $\Lambda =0$ we obtain  two Einstein metrics on
$P$ (cf.  \cite{J}),
which we denote by $\widetilde{g}_{e_1}$ and $\widetilde{g}_{e_2}$, corresponding to the `slope' $y=\frac{a}{b}$ being $r_1$ or $r_2$. Notice that $r_1>r_2$ and $\alpha_2>\alpha_1$, if $p>\sqrt{4(2m+3)q^2}$.

\begin{theorem} \label{qk-main1} Assume that $(M, g)$ is a quaternion-K\"ahler manifold with Einstein constant $p>0$. Let $P$ be the associated $\mathsf{SU}(2)$-principle bundle with connection $A$ satisfying (\ref{qk-p-curv}). Assume that $p>2\sqrt{(2m+3)q^2}$.  Then, there exists a type-I ancient solution $\widetilde{g}_{a, b}$ to Ricci flow on the total space $P$ with $r_2<y(\tau)<r_1$, which flows (after re-normalization) the Einstein metric $\widetilde{g}_{e_2}$ (which corresponds to $y=r_2$) into the Einstein metric $\widetilde{g}_{e_1}$ (which corresponds to $y=r_1$) as $t$ increases from $-\infty$ to some $t_0$.

There also exists a type-I ancient solution $\widetilde{g}_{\tilde{a}, \tilde{b}}$ to Ricci flow on the total space $P$ with $r_2>y(\tau)>0$ which flows the Einstein metric $\widetilde{g}_{e_2}$ from $t=-\infty$ into a singularity at time $t_0$ when it collapses the $\mathsf{SU}(2)$ fiber.
\end{theorem}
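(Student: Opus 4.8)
The plan is to realize both solutions as trajectories of the reduced system (\ref{qk-ode1})--(\ref{qk-ode2}) confined to prescribed sub-intervals of the $y=a/b$ axis, reading off their endpoint behaviour from the explicit first integral (\ref{qk-int}). First I would record the phase portrait of the scalar equation (\ref{ode-y}): under the hypothesis $p>\sqrt{4(2m+3)q^2}$ the quadratic $4-2py+(2m+3)q^2y^2$ has two positive roots $0<r_2<r_1$, so
\[
\frac{\de y}{\de \tau}=\frac{(2m+3)q^2}{b}\,(y-r_1)(y-r_2).
\]
Since $b>0$, this quadratic is negative on $(r_2,r_1)$ and positive on $(0,r_2)$; hence the only constant solutions are the equilibria $y\equiv r_1$ and $y\equiv r_2$, corresponding to the Einstein metrics $\widetilde{g}_{e_1}$ and $\widetilde{g}_{e_2}$, while $y$ is strictly decreasing in $\tau$ on $(r_2,r_1)$ and strictly increasing on $(0,r_2)$. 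I would also record the sign facts $A,B>0$, $B-A=\frac{3}{2m+3}>0$, and $2p-3q^2r_2>2p-3q^2r_1>0$ (positivity following from $(4m+3)^2>9$); these give $b'=2p-3q^2y>0$, so $b$ is strictly increasing along every trajectory considered.

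For the connecting solution I fix $\Lambda>0$ and take the trajectory lying in $(r_2,r_1)$. Solving (\ref{qk-int}) for $b$ gives $b=\Lambda^{-1}(r_1-y)^{A}(y-r_2)^{-B}$, and the factored equation becomes the separable quadrature
\[
\de \tau=\frac{b(y)\,\de y}{(2m+3)q^2\,(y-r_1)(y-r_2)}.
\]
Near $y=r_2$ the integrand grows like $(y-r_2)^{-B-1}$, so since $B>0$ the $\tau$-length is infinite there; near $y=r_1$ it grows like $(r_1-y)^{A-1}$, so since $A>0$ the $\tau$-length is finite there. After translating $t_0$ so that the finite end sits at $\tau=0$, the trajectory is defined for all $\tau\in(0,\infty)$, with $(y,b)\to(r_1,0)$ as $\tau\to0$ and $(y,b)\to(r_2,\infty)$ as $\tau\to\infty$. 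Writing $\widetilde{g}_{a,b}=b\,(y\,\langle\cdot,\cdot\rangle_{\frak{g}}+\pi^*g)$ and using $b\sim(2p-3q^2r_1)\tau$ as $\tau\to0$ and $b\sim(2p-3q^2r_2)\tau$ as $\tau\to\infty$, the rescaled metrics $\frac{1}{\tau}\widetilde{g}_{a,b}$ converge to multiples of $\widetilde{g}_{e_1}$ at the singular time and of $\widetilde{g}_{e_2}$ as $t\to-\infty$; this is precisely the asserted flow of $\widetilde{g}_{e_2}$ into $\widetilde{g}_{e_1}$ as $t$ increases. Since $y$ stays in the compact interval $[r_2,r_1]$, every component in (\ref{principle-curvature}) is $O(1/b)$, and $b\ge(2p-3q^2r_1)\tau$ yields $|\widetilde{\Rm}|\,\tau\le C$, so the solution is type-I.

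The collapsing solution is the trajectory in $(0,r_2)$ for a fixed $\Lambda>0$, where now $b=\Lambda^{-1}(r_1-y)^{A}(r_2-y)^{-B}$. The same quadrature shows the $\tau$-length is infinite as $y\to r_2^{-}$ (so $(y,b)\to(r_2,\infty)$ as $\tau\to\infty$, again giving $\widetilde{g}_{e_2}$ after rescaling at $t=-\infty$) but finite as $y\to0^{+}$, where $b$ tends to the finite positive limit $\Lambda^{-1}r_1^{A}r_2^{-B}$ while $a=yb\to0$: the $\mathsf{SU}(2)$ fibre collapses. Normalising this end to $\tau=0$ and using $\frac{\de a}{\de \tau}=4+2mq^2y^2\to4$, we get $a\sim4\tau$, so the fibre curvature $\widetilde{R}_{\alpha\beta\delta\eta}=\frac{1}{4a}C^{\gamma}_{\alpha\beta}C^{\gamma}_{\delta\eta}$ blows up like $1/a\sim1/(4\tau)$ while the remaining components stay $O(1/b)=O(1)$; hence $a\ge4\tau$ again gives $|\widetilde{\Rm}|\,\tau\le C$ and the solution is type-I. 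Global existence on $(0,\infty)$ for both solutions is immediate from the first integral together with the monotonicity above (alternatively one bounds $a$ or $b$ and invokes a standard continuation theorem, exactly as in the proof of Theorem \ref{main1-u1}).

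The step I expect to be the real obstacle is upgrading the pointwise endpoint statements into genuine geometric ones: namely (i) promoting $y\to r_i$ and $\frac{1}{\tau}b\to\text{const}$ to convergence of the rescaled metrics to the Einstein metrics $\widetilde{g}_{e_i}$ in an appropriate (e.g.\ $C^{\infty}_{\mathrm{loc}}$) sense, and (ii) verifying that $y\to0$ is a bona fide Ricci-flow singularity realising the fibre collapse, with the uniform estimate $|\widetilde{\Rm}|\,\tau\le C$ valid on all of $(0,\infty)$ and not merely near the endpoints. Both reduce to sharp control of $b(\tau)$, respectively $a(\tau)$, near the two ends, which I would extract by a careful asymptotic analysis of the quadrature above, anchored on the limits $b'\to2p-3q^2r_i$ and $a'\to4$.
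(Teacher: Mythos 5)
Your argument is correct, and it shares the paper's overall framework: reduce the flow to the ODE system (\ref{qk-ode1})--(\ref{qk-ode2}), fix a level set $\Lambda>0$ of the first integral (\ref{qk-int}), and follow the trajectory of $y=a/b$ in $(r_2,r_1)$, respectively $(0,r_2)$. Where you genuinely diverge is in how the global existence and the endpoint behaviour are established. The paper (through the proof of Theorem \ref{sb-main1}, to which the first part is deferred) argues by monotonicity of $a$, $b$ and $y$, invokes an abstract continuation theorem (Theorem 7 of \cite{H}) for existence on $(\tau_1,\infty)$, identifies the limits of $y$ at each end by L'H\^opital-type arguments, and in the second part runs a case analysis to decide which of $a$, $b$ vanishes first at the singular time. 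You instead solve (\ref{qk-int}) explicitly for $b=\Lambda^{-1}|r_1-y|^{A}|y-r_2|^{-B}$ and convert (\ref{ode-y}) into the quadrature $\de\tau = b(y)\,\de y/\bigl[(2m+3)q^2(y-r_1)(y-r_2)\bigr]$, so that the infinite $\tau$-length at $y\to r_2$, the finite $\tau$-length at $y\to r_1$ (resp.\ $y\to 0$), and the limiting values of $(a,b)$ are all read off from the integrability exponents $-B-1$ and $A-1$ and from the explicit formula for $b(y)$; in particular the collapsing conclusion $a\to 0$, $b\to\Lambda^{-1}r_1^{A}r_2^{-B}>0$ falls out directly, replacing the paper's elimination of cases. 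This buys sharper asymptotics ($b\sim(2p-3q^2r_i)\tau$ at each end, hence the type-I bound) at the price of using the closed form of the first integral, which the paper's softer argument avoids. The two residual ``obstacles'' you flag are in fact harmless here: the rescaled metrics are fixed background tensors multiplied by the scalars $a/\tau$ and $b/\tau$, whose convergence you have already proved, so smooth convergence to $\widetilde{g}_{e_i}$ is automatic; and the uniform bound $|\widetilde{\Rm}|\,\tau\le C$ on all of $(0,\infty)$ follows from the two linear lower bounds you already have ($a\ge 4\tau$ near the collapsing end and $b\ge(2p-3q^2r_2)\tau$ for large $\tau$) together with boundedness on compact $\tau$-intervals.
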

\begin{proof}

The proof of the first part is the same as that of Theorem \ref{sb-main1} next. We refer readers to the next section for the detailed argument on the existence of the ODE system (\ref{qk-ode1}) and (\ref{qk-ode2}).

For the second part, choose some $\tau_1$ and positive $a(\tau_1)$ and $b(\tau_1)$ satisfying (\ref{qk-int}) and $y=\frac{a}{b}<r_2$. Then, the ODE (\ref{ode-y}) on $y$  implies that $\frac{\de y}{\de \tau}>0$. It is also easy to see that both $a(\tau)$ and $b(\tau)$ are increasing in $\tau$. The ODEs (\ref{qk-ode1}) and (\ref{qk-ode2}) can be solved for all $\tau>\tau_1$ since $y(\tau)< r_2$ for all $\tau>\tau_1$. By L'H\^opital's rule, it is easy to see from the ODEs (\ref{qk-ode1}) and (\ref{qk-ode2}) that
\[
\lim_{\tau\to \infty}y(\tau)=\lim_{\tau\to \infty}\frac{4+2mq^2 y^2}{2p-3q^2 y}.
 \]
 In view of $y(\tau)<r_2$, this further implies that $\lim_{\tau\to \infty} y(\tau)=r_2$. Hence, the re-scaled limit is $\widetilde{g}_{e_2}$. As $\tau$ decreases, $y(\tau)$ decreases.
Also, from the ODEs (\ref{qk-ode1}) and (\ref{qk-ode2}), either $a(\tau)$ or $b(\tau)$ will decrease to zero at $\tau_0$, for some
$\tau_0$. Again by L'H$\hat{o}$pital's rule, one can rule out the possibility that both $a(\tau)$ and $b(\tau)$ decrease to zero at $\tau_0$ simultaneously, since that would imply $y(\tau_0)$ equals either to $r_2$ or $r_1$ (which is impossible). Also by the fact that $0<y<r_2$ for $\tau\in (\tau_0, \tau_1)$, we rule out the possibility of $b(\tau_0)=0$ and $a(\tau_0)>0$. Hence, we conclude that at $\tau_0$, $a(\tau_0)=0$ and $b(\tau_0)>0$.

The type-I claim can be checked by the curvature formulae (\ref{principle-curvature}).
\end{proof}

\begin{remark} The existence of $\mathsf{SU}(2)$-principle bundle over a quaternion-K\"ahler manifold with a connection satisfying (\ref{qk-p-curv}) is not completely understood as in $\mathsf{U}(1)$-bundle case. Beyond the canonical associated one we do not know how to obtain other $\mathsf{SU}(2)$-principle bundles satisfying (\ref{qk-p-curv})   in general.
\end{remark}

Besides $\HP^m$, there are other quaternion-K\"ahler manifolds. There are infinitely many symmetric examples which can be found in \cite{Besse}, Table 14.52. They were classified by Wolf \cite{wolf-qk}. Hence there are many examples to which the above theorem can be applied.

\section{Ancient solutions via the Riemannian submersion}

Now we use the formulae and setting in Section 2.3 to construct the ancient solutions via the Riemannian submersion. We need this more general formulation particularly for constructing an ancient solution on the total space of the generalized Hopf fibration:
$\Sph^7\to \Sph^{15} \to \Sph^8$, since this does not fit into the formulation via  principle bundles.  Even though the formulation is quite general, in view of the rigidity result of Gromoll-Grove \cite{GG} and Wilking \cite{rigidity-w}, it is not as flexible as it appears if one insists that the fiber is a round sphere. The following proposition is the key step for our construction.

\begin{prop} Let $\pi: (P, g)\to (M, \check{g})$ be a Riemannian submersion with totally geodesic fiber.
 Let $g= \hat{g} + \check{g}$ be the metric decomposition. Suppose that the metrics on $P$, $M$ and  on the fibers are all Einstein with
 \begin{equation}\label{einstein}
  \Ric(g) = \lambda g, \quad \quad \Ric(\check{g}) = \check{\lambda} \check{g}, \quad \quad \Ric(\hat{g}) = \hat{\lambda} \hat{g}.
\end{equation}
 Let  $\widetilde{g}_{a, b}(\tau) = a(\tau) \hat{g} + b(\tau) \check{g}$.
 Then, $\widetilde{g}_{a, b}$ solving  the  Ricci flow equation
 is equivalent to
 \begin{eqnarray}
 & & \frac{ \de a}{\de \tau} =  2 \hat{\lambda} + 2 (\lambda -\hat{\lambda}) \frac{a^2}{b^2}, \label{sb-ode1}\\
 & & \frac{\de b}{\de \tau} = 2 \check{\lambda} - 2( \check{\lambda}- \lambda) \frac{a}{b} \label{sb-ode2}
 \end{eqnarray}
 whose first integral is given by
 \begin{equation}\label{sb-integral1}
 \left| 1- \frac{a}{b} \right|^{\frac{\lambda}{ \check{\lambda}-2 \hat{\lambda}}}
 \left| \frac{\hat{\lambda}}{ \check{\lambda}- \hat{\lambda}} -\frac{a}{b} \right|^{-\frac{\check{\lambda}^2 -2 \hat{\lambda} \check{\lambda}
 + \hat{\lambda} \lambda}{(\check{\lambda}-2 \hat{\lambda})(\check{\lambda}- \hat{\lambda})}} b^{-1} =
 \Lambda,
 \end{equation}
 where $\Lambda\ge 0$ is a constant.
 \end{prop}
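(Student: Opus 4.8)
The plan is to split the statement into two independent pieces: first reduce the Ricci flow to the ODE system (\ref{sb-ode1})--(\ref{sb-ode2}) by feeding the three Einstein conditions (\ref{einstein}) into the curvature formulae already derived in Section 2.3, and then extract the first integral (\ref{sb-integral1}) by the same separation-of-variables plus partial-fractions device used for the $\mathsf{SU}(2)$ system in Section 6.

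For the reduction I would start from the Ricci relations (\ref{sb-ricci-2}). The hypotheses (\ref{einstein}) say that in the $g$-orthonormal frame $\{e_\alpha, e_i\}$ one has $R_{\alpha\beta}=\lambda\delta_{\alpha\beta}$, $R_{ij}=\lambda\delta_{ij}$, $R_{i\alpha}=0$, together with $\hat{R}_{\alpha\beta}=\hat{\lambda}\delta_{\alpha\beta}$ and $\check{R}_{ij}=\check{\lambda}\delta_{ij}$. Substituting these into (\ref{sb-ricci-2}) kills the mixed term ($\tilde{R}_{i\alpha}=0$) and makes both diagonal blocks multiples of the identity, so that
\[
\Ric(\widetilde{g}_{a,b}) = \lf(\frac{a^2}{b^2}\lambda + \lf(1-\frac{a^2}{b^2}\ri)\hat{\lambda}\ri)\hat{g} + \lf(\frac{a}{b}\lambda + \lf(1-\frac{a}{b}\ri)\check{\lambda}\ri)\check{g}.
\]
This is the crucial structural point: $\Ric(\widetilde{g}_{a,b})$ again lies in the two-dimensional space spanned by $\hat{g}$ and $\check{g}$, so the flow preserves the ansatz $\widetilde{g}_{a,b}=a\hat{g}+b\check{g}$. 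Since $\partial_\tau\widetilde{g}_{a,b}=\frac{\de a}{\de\tau}\hat{g}+\frac{\de b}{\de\tau}\check{g}$, matching the coefficients of $\hat{g}$ and of $\check{g}$ in $\partial_\tau\widetilde{g}_{a,b}=2\Ric(\widetilde{g}_{a,b})$ (legitimate because $\hat{g}$ and $\check{g}$ are supported on complementary subbundles) yields exactly (\ref{sb-ode1}) and (\ref{sb-ode2}).

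For the first integral I would set $y=\frac{a}{b}$. A short computation from (\ref{sb-ode1})--(\ref{sb-ode2}) gives
\[
\frac{\de y}{\de\tau} = \frac{2}{b}\lf((\check{\lambda}-\hat{\lambda})y^2 - \check{\lambda}y + \hat{\lambda}\ri),
\]
and the key observation is that the quadratic factors as $(\check{\lambda}-\hat{\lambda})(y-1)\lf(y-\frac{\hat{\lambda}}{\check{\lambda}-\hat{\lambda}}\ri)$: one checks directly that $y=1$ is a root, and the product of roots identifies the second root as $r=\frac{\hat{\lambda}}{\check{\lambda}-\hat{\lambda}}$. Dividing the $y$-equation by (\ref{sb-ode2}) removes $\tau$ and separates the variables,
\[
\frac{\check{\lambda}-(\check{\lambda}-\lambda)y}{(\check{\lambda}-\hat{\lambda})(y-1)(y-r)}\,\de y = \frac{\de b}{b}.
\]
Partial fractions write the left-hand side as $\frac{C_1}{y-1}+\frac{C_2}{y-r}$ with residues $C_1=\frac{\lambda}{\check{\lambda}-2\hat{\lambda}}$ and $C_2=-\frac{\check{\lambda}^2-2\hat{\lambda}\check{\lambda}+\hat{\lambda}\lambda}{(\check{\lambda}-2\hat{\lambda})(\check{\lambda}-\hat{\lambda})}$; integrating and exponentiating, and noting $|y-r|=\lf|\frac{\hat{\lambda}}{\check{\lambda}-\hat{\lambda}}-\frac{a}{b}\ri|$, produces precisely (\ref{sb-integral1}) with the integration constant absorbed into $\Lambda$.

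The computations are routine once organized this way, so I expect the only real difficulties to be bookkeeping. Two points deserve care: making sure the curvature components are read in the correct frame when passing between (\ref{sb-ricci-2}) (written in the $\widetilde{g}_{a,b}$-orthonormal frame) and the coefficient-matching in $\partial_\tau\widetilde{g}_{a,b}=2\Ric$, and the residue computation producing the somewhat delicate exponents in (\ref{sb-integral1}). The latter also exposes the degenerate loci $\check{\lambda}=\hat{\lambda}$ (where the quadratic degenerates to a linear equation) and $\check{\lambda}=2\hat{\lambda}$ (where the two roots collide and the power-law first integral must be replaced by a logarithmic one); the formula (\ref{sb-integral1}) as stated is valid off these loci, which covers the geometric examples of interest.
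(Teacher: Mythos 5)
Your proposal is correct and follows essentially the same route as the paper: substitute the Einstein conditions into the Ricci formulae (\ref{sb-ricci-2}), match the $\hat{g}$ and $\check{g}$ coefficients to obtain (\ref{sb-ode1})--(\ref{sb-ode2}), then set $y=a/b$, divide by (\ref{sb-ode2}), separate variables and integrate via the identical partial-fraction decomposition. Your residues $C_1$ and $C_2$ agree with the paper's exponents, and your added remarks on frame conventions and the degenerate loci $\check{\lambda}=\hat{\lambda}$, $\check{\lambda}=2\hat{\lambda}$ are sensible refinements the paper leaves implicit.
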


 \proof{}
 By (\ref{sb-ricci-2}) and the assumption (\ref{einstein}), the Ricci tensor of $\widetilde{g}_{a, b} = a \hat{g} + b \check{g}$ is given by
 \[
 \Ric (\tilde{g}) = \left( \hat{\lambda} + (\lambda -\hat{\lambda}) \frac{a^2}{b^2} \right) \hat{g}
 + \left( \check{\lambda} -( \check{\lambda}- \lambda) \frac{a}{b} \right) \check{g}
 \]
 Let $y =  \frac{a}{b}$. Also, denote $y' = \frac{\de y}{\de \tau}$ and likewise for the derivatives $a'$ and $b'$.  Then, $y$ satisfies
 \beqn
 y' &=&  \frac{1}{b} \left( a'-  \frac{a}{b} b' \right) \CR
 &=& \frac{1}{b} \left(a'-  y b' \right) \CR
 &=& \frac{2}{b} \left(\hat{\lambda} + (\lambda -\hat{\lambda}) y^2 -  y (\check{\lambda} - ( \check{\lambda}- \lambda) y ) \right) \CR
 &=& \frac{2}{b} \left( \hat{\lambda} - \check{\lambda} y + (\check{\lambda} -\hat{\lambda} ) y^2\right).
 \eeqn
 Dividing the above by (\ref{sb-ode2}) we have that
 \beqn
 \frac{y'}{b'} = \frac{1}{b} \frac{\hat{\lambda} - \check{\lambda} y + (\check{\lambda} -\hat{\lambda} ) y^2}{\check{\lambda} -
  ( \check{\lambda}- \lambda)y}.
 \eeqn
 One can separate $y$ and $b$ such that
 \beqn
 \frac{\check{\lambda} -
  ( \check{\lambda}- \lambda)y}{\hat{\lambda} - \check{\lambda} y + (\check{\lambda} -\hat{\lambda} ) y^2} y' = \frac{b'}{b} .
 \eeqn
 Notice that
 \[
 \frac{\check{\lambda} -
  ( \check{\lambda}- \lambda)y}{\hat{\lambda} - \check{\lambda} y + (\check{\lambda} -\hat{\lambda} )
  y^2} = \frac{\lambda}{ \check{\lambda}-2 \hat{\lambda}} \frac{1}{y-1} -\frac{\check{\lambda}^2 -2 \hat{\lambda} \check{\lambda}
 + \hat{\lambda} \lambda}{(\check{\lambda}-2 \hat{\lambda})(\check{\lambda}- \hat{\lambda})} \frac{1}{y-\frac{\hat{\lambda}}{ \check{\lambda}- \hat{\lambda}}}.
 \]
 The first integral claimed in the proposition follows by integration of the separable equation on $y$ and $b$.
 \qed

By the equations found in Section 2.3, it is also easy to see that  $\check{\lambda}\ge\lambda\ge\hat{\lambda}$. When $\Lambda=0$,
we observe that if $\hat{\lambda}> 0$ and $2\hat{\lambda}\ne \check{\lambda}$,  there are two trivial solutions (Einstein metrics). (This is known.  See for example, Theorem 9.37 of \cite{Besse}.)  They correspond to $\frac{a}{b}=1$ and
$\frac{a}{b}= \Lambda_1\doteqdot \frac{\hat{\lambda}}{ \check{\lambda}- \hat{\lambda}}$, which are given by
\begin{eqnarray}
\de s^2_{e_1}(\tau) &=&2\lambda \tau g,\label{sb-ein1}\\
\de s^2_{e_2} (\tau) &=& 2\Lambda_2 \tau (\Lambda_1 \hat{g}+ \check{g}), \label{sb-ein2}
\end{eqnarray}
where $\Lambda_2=\frac{\check{\lambda}^2 -2 \hat{\lambda} \check{\lambda}+ \hat{\lambda} \lambda}{\check{\lambda}- \hat{\lambda}}.$ The first is the Einstein metric we started with. The second is a different  Einstein metric on the space.

\begin{theorem} \label{sb-main1} Assume that $\Lambda_1\ne 1$ and $\hat{\lambda}>0$. There exists an ancient solution $\de s^2(\tau)$ to Ricci flow on the total space $P$ with the  slope $y$ between $\Lambda_1$ and $1$.  If $\Lambda_1<1$, it flows (after re-normalization) the Einstein metric $\de s^2_{e_2}(\frac{1}{2\Lambda_1})$ into the Einstein metric $\de s^2_{e_1}(\frac{1}{2\lambda})$ as $t$ increases from $-\infty$ to some $t_0$. If $\Lambda_1>1$, it flows $\de s^2_{e_1}(\frac{1}{2\lambda})$ into $\de s^2_{e_2}(\frac{1}{2\Lambda_1})$ as $t$ increases from $-\infty$ to some $t'_0$. Both solutions are of type-I.
\end{theorem}
\begin{proof} We divide into two cases, $\Lambda_1 <1$ and $\Lambda_1>1$. Since for the application we always have  $\Lambda_1<1$ we shall prove this case and omit the details to the other (dual) case, whose proof is similar.

For $\Lambda_1 <y<1$, let
$$
F(y)\doteqdot  \left( 1- y \right)^{\frac{\lambda}{ \check{\lambda}-2 \hat{\lambda}}}
 \left( y-\frac{\hat{\lambda}}{ \check{\lambda}- \hat{\lambda}}  \right)^{-\frac{\check{\lambda}^2 -2 \hat{\lambda} \check{\lambda}
 + \hat{\lambda} \lambda}{(\check{\lambda}-2 \hat{\lambda})(\check{\lambda}- \hat{\lambda})}}= \left( 1- y \right)^{\frac{\lambda}{ \check{\lambda}-2 \hat{\lambda}}}
 \left( y-\Lambda_1  \right)^{-\frac{\Lambda_2}{\check{\lambda}-2 \hat{\lambda}}}.
$$
Under the assumption that $\Lambda_1<1$, it is easy to check that $\Lambda_1 \lambda < \Lambda_2 < \lambda$. Hence, $\frac{\de F}{\de y}<0$ for $y\in (\Lambda_1, 1)$.
Pick $\tau=1$ and $a(1)>0$ and $b(1)>0$ such that they satisfy $y=\frac{a}{b}\in (\Lambda_1, 1)$ and $$F(y)=b\Lambda
$$ for some $\Lambda$. By the previous proposition, we know that this condition will be preserved for solutions $a(\tau)$ and $b(\tau)$ of the Ricci flow equation (\ref{sb-ode1}) and (\ref{sb-ode2}).  It is easy to infer from the Ricci flow equations that
$$
\frac{\de a}{\de \tau} \ge 2\hat{\lambda}, \quad \quad \frac{\de b}{\de \tau} \ge 2\lambda.
$$
Hence, $a(\tau)$ and $b(\tau)$ are increasing in $\tau$, and, by the short time existence, one can solve (\ref{sb-ode1}) and (\ref{sb-ode2}) for some interval $(1-\delta, 1+\delta)$.
On the other hand
$$
\frac{ \de y}{ \de \tau} =\frac{2(\check{\lambda}-\hat{\lambda})}{b}(y-\Lambda_1)(y-1).
$$
Hence, for $y\in (\Lambda_1, 1)$ it is decreasing in $\tau$. This implies that as $\tau$ increase, the right hand side of (\ref{sb-ode1}) and (\ref{sb-ode2}) stay bounded above by a fixed number. This implies, by the existence theorem of ODE, say Theorem 7 of \cite{H}, that one can solve (\ref{sb-ode1}) and (\ref{sb-ode2}) for all $\tau>1$.
As $\tau$ decreases, one can solve the equations as long as $a$ and $b$ stay positive and $y<1$. By the uniqueness, it is not possible for $y$ to reach  $1$ while $a(\tau)$ and $b(\tau)$ remain positive. Assume that as $\tau \to \tau_0$, $a(\tau)\to 0$. Since $y(\tau) \ge \Lambda_1$, it also implies that $b(\tau)\to 0$. Using (\ref{sb-ode1}) and (\ref{sb-ode2}) we can compute that $y(\tau)\to 1$ as $\tau\to \tau_0$. Hence, if we blow up the metric $\de s^2(\tau)$ by $\frac{1}{\tau-\tau_0}$, it will approach to $\de s^2_{e_1}(\frac{1}{2\lambda})$. Similarly, we can argue that as $\tau\to \infty$, $y(\tau)\to \Lambda_1$. Hence, if we blow down the metric by $\frac{1}{\tau}$ as $\tau \to \infty$, $\de s^2(\tau)$ will approach to $\de s^2_{e_2}(\frac{1}{2\Lambda_2})$.

The type-I claim follows from the formulae in Section 2.3, the fact that $C_1\tau \le b(\tau)\le C_2 \tau$ for some positive constants $C_1, C_2$ and that $y$ stays bounded. \end{proof}

 Applying the  same argument in the above theorem we have  the following result.
\begin{corollary}\label{sb-collapsing}
 Assume that $\hat{\lambda}> 0$.  If $\Lambda_1<1$, there exists an ancient solution $\de s^2(\tau)$ to Ricci flow on the total space $P$, such that it exists for $t \in (-\infty, t_0)$ and with $y(\tau)<\Lambda_1$. Moreover, it is of type-I and as $t\to t_0$, $\de s^2(\tau)$ collapses into $b\, \check{g}$.
\end{corollary}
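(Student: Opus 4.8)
The plan is to rerun the argument of Theorem \ref{sb-main1} in the complementary region $0<y<\Lambda_1$, where the new phenomenon is collapse of the fibre rather than an interpolation between two Einstein metrics; it is the exact analogue of the second part of Theorem \ref{qk-main1}. First I would fix $\tau=1$ and pick $a(1),b(1)>0$ with $y(1)=a(1)/b(1)\in(0,\Lambda_1)$, so that (\ref{sb-integral1}) holds with some $\Lambda>0$; by the preceding Proposition this constraint is preserved along any solution of (\ref{sb-ode1})--(\ref{sb-ode2}). The governing equation is the one already extracted in the proof of Theorem \ref{sb-main1},
$$\frac{\de y}{\de\tau}=\frac{2(\check{\lambda}-\hat{\lambda})}{b}\,(y-\Lambda_1)(y-1),$$
whose right-hand side is strictly positive for $y\in(0,\Lambda_1)$, since both factors are negative. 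Hence $y$ is strictly increasing in $\tau$, and because $\Lambda_1$ is an equilibrium, uniqueness keeps $0<y(\tau)<\Lambda_1$ throughout the interval of existence.

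For the forward direction $\tau\to\infty$ the analysis is identical to Theorem \ref{sb-main1}: the bounds $0<y<\Lambda_1<1$ make both $\frac{\de a}{\de\tau}$ and $\frac{\de b}{\de\tau}$ positive and bounded above, so $a$ and $b$ grow at most linearly and the solution extends to all $\tau>1$ (invoking, say, Theorem 7 of \cite{H}); a L'H\^opital argument gives $\lim_{\tau\to\infty}y=\Lambda_1$, so the blow-down limit is the Einstein metric $\de s^2_{e_2}(\tfrac{1}{2\Lambda_2})$, providing the $t\to-\infty$ end.

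The heart of the proof is the backward direction. As $\tau$ decreases from $1$ the ratio $y$ decreases and $a,b$ both decrease, and the flow continues while $a,b>0$. The decisive estimate is
$$\frac{\de a}{\de\tau}=2\hat{\lambda}+2(\lambda-\hat{\lambda})y^2\ge 2\hat{\lambda}>0,$$
where $\hat{\lambda}>0$ because $\Lambda_1>0$; this forces $a$ to vanish at some \emph{finite} backward time $\tau_0$. At $\tau_0$ I must then determine which factor degenerates. Were $b\to 0$ with $a$ bounded away from $0$, then $y=a/b\to+\infty$, contradicting $y<\Lambda_1$; were $a$ and $b$ to vanish together, the L'H\^opital computation used in Theorem \ref{sb-main1} would force $y(\tau_0)\in\{\Lambda_1,1\}$, contradicting $0<y<\Lambda_1$. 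Hence $a(\tau_0)=0$ while $b(\tau_0)>0$: the fibre collapses and the metric degenerates to $b(\tau_0)\,\check{g}$. Since the ODE system is autonomous, a translation in $\tau$ normalizes $\tau_0=0$, yielding an ancient solution on $t\in(-\infty,t_0)$ that collapses into $b\,\check{g}$ as $t\to t_0$.

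Finally, the type-I bound follows from the curvature formulae (\ref{sb-curvatue-2}) as in Theorem \ref{sb-main1}. Near the singularity $y\to 0$, so $a\approx 2\hat{\lambda}\tau$ is comparable to the time-to-singularity while $b\to b(\tau_0)>0$ stays bounded; the dominant component is the fibre curvature $\widetilde{R}_{\alpha\beta\gamma\delta}=\tfrac{1}{a}R_{\alpha\beta\gamma\delta}\sim\tfrac{1}{\tau}$, with all remaining components bounded, so $|\widetilde{\Rm}|\,\tau\le C$. The one genuinely new obstacle relative to Theorem \ref{sb-main1} is the dichotomy at $\tau_0$ --- excluding $b\to0$ and the simultaneous-collapse scenario --- but both are dispatched by the sign of $\frac{\de y}{\de\tau}$ and the same L'H\^opital argument that underlies Theorem \ref{sb-main1}.
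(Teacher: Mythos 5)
Your proposal is correct and follows essentially the same route as the paper: the paper proves this corollary by simply invoking ``the same argument as in Theorem \ref{sb-main1}'', whose collapsing variant is spelled out in the second part of the proof of Theorem \ref{qk-main1}, and your argument reproduces exactly that --- monotonicity of $y$ on $(0,\Lambda_1)$, the bound $\frac{\de a}{\de\tau}\ge 2\hat{\lambda}>0$ forcing $a$ to vanish at a finite backward time, and the L'H\^opital dichotomy showing $b$ stays positive so that only the fibre collapses. Nothing further is needed.
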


If $\Lambda_1>1$ and $y(\tau)$ starts with some amount bigger than $\Lambda_1$ at,  say $\tau=1$, it is not hard to see that the solution become singular for some $\tau_1>1$. So in this case, there exists no ancient solution.

\begin{remark} When $\hat{\lambda}=0$, there also exists ancient solution on the total space $P$ such that as $t\to$ some singular time, it converges (after re-scaling) to the original Einstein metric on $P$. The proof is similar to Theorem \ref{main1-u1}. Similarly, the ancient solution of this case is of type-I and collapsed.
\end{remark}

A good set of examples to which Theorem \ref{sb-main1} can be applied may be obtained by the fibration
$$
H/K\to G/K\to G/H: \quad gK \to gH
$$
 via Lie groups,  $K\subset H \subset G$, with $K$ and $H$ being compact subgroups of a compact Lie $G$. Some concrete examples are listed below.

 {\bf Example 1.} The Hopf fibration: $\Sph^3 \to \Sph^{4m+3} \to \HP^m$. This is given by $$\pi: (q_1, \cdots, q_{m+1})\to [q_1,  \cdots ,  q_{m+1} ] , $$ where $(q_1, \cdots, q_{m+1})\in \mathbb{H}^{m+1}$ with $\sum |q_i|^2 =1$. Endow $\Sph^{4m+3}$ with the constant curvature $1$ metric, and the symmetric metric $\check{g}$ on $\HP^{m}$ with sectional curvature between $1$ and $4$. Now
  $\lambda=4m+2, \hat{\lambda}=2$ and $\check{\lambda}=4m+8$ with $\Lambda_1<1$. The non-canonical Einstein metric was found first in \cite{J}. Its sectional curvature is positive and  has pinching constant $\frac{1}{(2m+3)^2}$. Theorem \ref{sb-main1} concludes that there exists an ancient solution which `connects' it with the canonical round  Einstein metric. However, due to the result of Tachibana \cite{Ta} and Wolf \cite{wolf}, the non-canonical Einstein metric can not have nonnegative curvature operator. Note that Theorem \ref{sb-main1} also conclude that the non-canonical Einstein metric from \cite{J} is a unstable fixed point of Ricci flow.

\begin{remark} The above case can also be derived from Theorem \ref{qk-main1}. Due to the ample examples of quaternion-K\"ahler manifolds, one expects that the cases in Theorem  \ref{qk-main1} are not completely contained by Theorem \ref{sb-main1}. For the overlapping cases, note that the `slope' function $y$ in Section 6 and Section 7 are different by a factor.
\end{remark}

  {\bf Example 2.}  Consider $\pi: \mathbb{C}P^{2m+1} \to \mathbb{H}P^m$ defined by
  \[
 [z_1, z_2, \cdots, z_{2m+1}, z_{2m+2}] \to [z_1+z_2{\bf j}, \cdots,
 z_{2m+1}+  z_{2m+2}{\bf j} ] .
  \]
 This is a fibre bundle with totally geodesic fibre $\Sph^2= \mathsf{Sp}(1)/\mathsf{U}(1)$. The
 Fubini-Study metric on $\mathbb{C}P^{2m+1}$ (with curvature between $1$ and
 $4$) induces  a metric of Fubini-Study type on
  $\mathbb{H}P^{m}$ with curvature ranging between $1$ and $4$. Both metrics are
 Einstein and $\lambda = 4m+4$, $\check{\lambda}=4m+8$. The metric on the fibre
 is of constant curvature $4$, so $\hat{\lambda}=4$. Clearly $\Lambda_1<1$. The existence of the non-canonical Einstein metric on $\CP^{2m+1}$ was found by Ziller \cite{Ziller1}. Its sectional curvature is positive with pinching constant $\frac{1}{4(m+1)^2}$. It is Hermitian (with respect to the usual complex structure) \cite{Ziller1}. Theorem \ref{sb-main1} concludes that there exists an ancient solution which `connects' it with the canonical Fubini-Study metric and Ziller's Einstein metric on $\CP^{2m+1}$ is a unstable fixed point of Ricci flow equation.

    {\bf Example 3.} Let $\mathbb{O}$ be octonion numbers. One can identify $\mathbb{R}^{16}$ with $\mathbb{O}^2$, and $\mathbb{R}^9$ with $\mathbb{R} \oplus \mathbb{O}$.
 The octonionic Hopf bundle is a $\Sph^7$-bundle over $\Sph^8$ defined by
  \[
  \pi: (o_1, o_2) \longrightarrow (|o_1|^2-|o_2|^2, 2 \bar{o}_1 o_2 ) ,
  \]
  where $(o_1, o_2)\in \mathbb{O}^2$ with $|o_1|^2+|o_2|^2 =1$.
  Consider on $\Sph^{15}$ the canonical metric with constant curvature $1$, hence
  $\lambda = 14$. The fibre $\Sph^7$ is totally geodesic with constant curvature $1$, so $\hat{\lambda}=6$. This metric induces a metric on $\Sph^8$ with constant curvature $4$, thus
  $\check{\lambda}=28$. Again $\Lambda_1<1$. The third non-canonical Einstein metric on $\Sph^{15}$ (besides the one in the example 1 above) was found in \cite{BK} by Bourguignon and Karcher (see also \cite{Besse}). It has pinching constant $\frac{9}{121}$. Theorem \ref{sb-main1} concludes that there exists another ancient solution on $\Sph^{15}$ `connecting' it with the canonical Einstein metric and implies that the non-canonical Einstein metric of Bourguignon-Karcher is a unstable fixed point. This together with results from Section 5, Example 1 and Corollary \ref{sb-collapsing} proves the theorem stated in the introduction.

{\bf Example 4.} Let $P$ be the twistor space over a compact quaternion-K\"ahler manifold $M^{4m}$ ($m\ge 2$) with positive scalar curvature. By a result of Salamon \cite{Sa} and B\'erard-Bergery \cite {BB2} (see also \cite{Besse}, Theorem 14.9), one can endow $P$ with a K\"ahler-Einstein metric such that the projection to $M$ is a Riemannian submersion with totally geodesic fibers. Since the Riemannian submersion does not  decrease the curvature, one can see that $\Lambda_1<1$ for this case too. One can check for the resulting K\"ahler-Einstein metric that $\lambda$, $\check{\lambda}$ and $\hat{\lambda}$ have the same values as in Example 2. Hence,  this can be viewed as a generalization of Example 2. Note that the other Einstein metric on the twistor space is a Hermitian non-K\"ahler manifold (cf. Corollary 14.84 of \cite{Besse}). Similarly, Theorem \ref{sb-main1} concludes that this Einstein metric is also unstable.

{\bf Example 5.} In dimension 6, 7, 12, 13, 24, Berger\cite{Berger-pos}, Wallach\cite{wallach-pos}, Aloff-Wallach \cite{AW-pos} constructed homogenous spaces of positive sectional curvature. It turns out that on these spaces, one can endow the Riemannian submersion structure satisfying Theorem \ref{sb-main1}. We refer to \cite{kong}, a forthcoming article of the second author for the details. Hence, there exist ancient solutions on these spaces too.

 The ancient solutions obtained from Theorem \ref{sb-main1} are all non-collapsed.
It is also easy to check that the standard  Einstein metric  has greater entropy (in the sense of Perelman \cite{P-entropy}) than the noncanonical Einstein metrics. Example 1 shows that in the classification result of \cite{Ni-anc}, the condition on the curvature can not be weaken to the positivity of the sectional curvature. Examples 1 and 3 also show that one can not expect that non-collapsed ancient solutions are rotationally symmetric even assuming the nonnegativity of the sectional curvature.

\section{Appendix-derivation of ODEs on Fateev's ansatz}

Here we follow the definitions, notations and computations made in Section 3. The main goal is to reduce the Ricci flow equation into a system of ODE, namely Proposition \ref{Fateev=2eqs}, under the ansatz in Section 3.
\subsection{}
Let $\Delta \doteqdot BC-D^2$. If we introduce $y_1=\theta, y_2=\chi_1, y_3=\chi_2$, the Christoffel symbols are given by
\begin{eqnarray*}
\left(\Gamma^1_{ij}\right) =\left(\begin{array}{lll}\frac{1}{2}\frac{A'}{A}& \quad 0& \quad 0 \\
0\quad & -\frac{1}{2}\frac{B'}{A}& -\frac{1}{2}\frac{D'}{A}\\
0 \quad& -\frac{1}{2} \frac{D'}{A}& -\frac{1}{2}\frac{C'}{A}\end{array}\right)
, \quad \quad
\left(\Gamma^2_{ij}\right) =\left(\begin{array}{lll}\frac{1}{A}& \quad 0& \quad 0 \\
0\quad & \,\,\, \frac{C}{\Delta}& -\frac{D}{\Delta}\\
0 \quad& -\frac{D}{\Delta} & \,\,\, \frac{B}{\Delta}\end{array}\right),
\end{eqnarray*}
\begin{eqnarray*}
\left(\Gamma^3_{ij}\right) =\left(\begin{array}{lll} \quad 0 \quad & \frac{1}{2\Delta}(CB'-D D')& \frac{1}{2\Delta}(C D'-D C') \\
\frac{1}{2\Delta}(CB'-D D')& \quad 0 \quad & \quad 0 \quad \\
\frac{1}{2\Delta}(C D'-D C') & \quad 0 \quad  & \quad 0 \quad\end{array}\right).
\end{eqnarray*}
Here, $A'=\frac{\partial A}{\partial \theta}$ and the same definition applies to $B', C', D'$. Using the formula
$$
R_{ij} =\frac{\partial \Gamma^t_{ij}}{\partial y_t}-\frac{\partial \Gamma^t_{it}}{\partial y_j}+\Gamma^s_{ij}\Gamma^t_{st} -\Gamma^s_{it}\Gamma_{sj}^t
$$
and the above expressions for the Christoffel symbols, direct computation yields
\begin{eqnarray}
R_{11} &=&-\left(\Gamma^2_{12}+\Gamma^3_{13}\right)'+\Gamma^1_{11}\Gamma^2_{12}+\Gamma^{1}_{11}\Gamma^3_{13}
-(\Gamma^2_{12})^2 -2\Gamma^2_{13}\Gamma^3_{12}-(\Gamma^3_{13})^2\nonumber \\
&=& \frac{A'}{4A}\frac{CB'+BC'-2DD'}{\Delta}-\frac{CB''+BC''-2D D''}{2\Delta}\label{r11}\\
&\,& +\frac{1}{4\Delta^2}\left[ C'^2B^2 +C^2 B'^2 +2 D^2 D'^2\right. \nonumber\\
&\,& \left. -4C'B D D' -4C B' D D' +2C'B'D^2 +2D'^2 BC\right];\nonumber
\end{eqnarray}
\begin{eqnarray}
R_{22} &=&\left(\Gamma^1_{22}\right)'+\Gamma^1_{22}\Gamma^1_{11}+\Gamma^{1}_{22}\Gamma^3_{13}
-\Gamma^1_{23}\Gamma^3_{12}-\Gamma^2_{21}\Gamma^1_{22}\nonumber \\
&=& \frac{A'B'}{4A^2}-\frac{B''}{2A}+\frac{1}{4A \Delta}
\left[B'(CB'-BC')+2D'(D'B-DB') \right];\label{r22}
\end{eqnarray}

\begin{eqnarray}
R_{23} &=&\left(\Gamma^1_{23}\right)'+\Gamma^1_{23}\Gamma^1_{11}-\Gamma^{1}_{22}\Gamma^2_{13}
-\Gamma^3_{21}\Gamma^1_{33}\nonumber \\
&=& \frac{A'D'}{4A^2}-\frac{D''}{2A}+\frac{1}{4A \Delta}
\left[B'(CD'-DC')+C'(BD'-DB') \right];\label{r23}
\end{eqnarray}

\begin{eqnarray}
R_{33} &=&\left(\Gamma^1_{33}\right)'+\Gamma^1_{33}\Gamma^1_{11}+\Gamma^1_{33}\Gamma^2_{12}+\Gamma^1_{33}\Gamma^3_{13}
-\Gamma^{1}_{32}\Gamma^2_{13}
-2\Gamma^1_{33}\Gamma^3_{13}\nonumber \\
&=& \frac{A'C'}{4A^2}-\frac{C''}{2A}+\frac{1}{4A \Delta}
\left[C'(C' B-C B')+2D'(C D'-D C') \right]\label{r33}
\end{eqnarray}
and the Ricci curvature has the form
$$
\left(R_{ij}\right)=\left(\begin{array}{lll}  R_{11}  &  0 & 0 \\
0\quad& R_{22} &  R_{23} \\
0 & R_{32}   & R_{33}\end{array}\right).
$$
Note that $R_{ij}$ can also be written in a more convenient form
\begin{eqnarray}
R_{11}&=&\frac{1}{4}\left(\log A\right)' \left(\log \Delta\right)'-\frac{1}{2}\left(\log \Delta \right)''-\frac{1}{4}\left[\left(\log \Delta \right)'\right]^2 +\frac{B'C'-D'^2}{2\Delta},\label{r11-clean}\\
&=&\frac{\Delta'}{4\Delta}\left(\frac{A'}{A}+\frac{\Delta'}{\Delta}\right)-\frac{\Delta''}{2\Delta}+
\frac{B'C'-D'^2}{2\Delta}\nonumber\\
R_{22}&=& \frac{B'}{4A}\left(\frac{A'}{A}+\frac{\Delta'}{\Delta}\right)-\frac{B''}{2A}-\frac{B}{A}
\frac{B'C'-D'^2}{2\Delta},  \label{r22-clean}\\
R_{23}&=& \frac{D'}{4A}\left(\frac{A'}{A}+\frac{\Delta'}{\Delta}\right)-\frac{D''}{2A}-\frac{D}{A}
\frac{B'C'-D'^2}{2\Delta},  \label{r23-clean}\\
R_{33}&=&\frac{C'}{4A}\left(\frac{A'}{A}+\frac{\Delta'}{\Delta}\right)-
\frac{C''}{2A}-\frac{C}{A}\frac{B'C'-D'^2}{2\Delta}.   \label{r33-clean}
\end{eqnarray}

Introducing the {\it integrability conditions}
\begin{equation}\label{ic}
(u+d)^2=a^2+c^2, \quad \quad d^2=b^2 +c^2
\end{equation}
the expressions for $\Delta$ and $\bar{\Delta}$ simplify to
\begin{equation}\label{ic-c1}
\bar{\Delta}=\bB \bC -\bD^2 =\frac{1}{4}\sin ^2 2\theta \, w(\tau, \theta), ~ \mbox{ hence }\Delta =\frac{1}{4}\frac{\sin ^2 2\theta}{w(\tau, \theta)}.
\end{equation}
Straightforward computation also shows that
\begin{eqnarray}\label{dd}
\frac{B'C'-D'^2}{2\Delta}&=& \frac{\bB'\bC'-\bD'^2}{2\bar{\Delta}}+\frac{1}{2}\left(\frac{w'}{w}\right)^2 -\frac{1}{2}\frac{w'}{w}\frac{\bar{\Delta}'}{\bar{\Delta}} \nonumber\\
&=& \frac{\bB'\bC'-\bD'^2}{2\bar{\Delta}}-2\frac{\cos 2\theta}{\sin 2\theta} \frac{w'}{w} , \\
\frac{\bB'\bC'-\bD'^2}{2\bar{\Delta}}&=&-\frac{2}{w}\left((u+2d)^2-4b^2 \cos^2 2\theta\right)=-\frac{2}{w}(u+2d)^2+2\frac{\cos 2\theta}{\sin 2\theta} \frac{w'}{w} , \nonumber
\end{eqnarray}
since
\begin{eqnarray*}
\left(\log A\right)'&=& -\frac{w'}{w}\\
\left(\log \Delta\right)'&=& 4\frac{\cos 2\theta}{\sin 2\theta}-\frac{w'}{w}\\
\left(\log \Delta \right)''&=& -\frac{8}{\sin^2 2\theta}-\frac{w''}{w}+\left(\frac{w'}{w}\right)^2.
\end{eqnarray*}
Note that (\ref{dd}) also implies that
\begin{equation}\label{this}
\frac{B'C'-D'^2}{2\Delta}=-\frac{2(u+2d)^2}{w}.
\end{equation}
Putting all together we have that
\begin{eqnarray}
R_{11}&=& 4+\frac{1}{2}\frac{w''}{w}-\frac{1}{2}\left(\frac{w'}{w}\right)^2 -\frac{w'}{w}\frac{\cos 2\theta}{\sin 2\theta}
+\frac{\bB'\bC'-\bD'^2}{2\bar{\Delta}}\nonumber\\
&=&-\frac{2(u+2d)^2}{w}+4(a^2-b^2)\cdot \frac{ a^2 +b^2\cos^2 2\theta}{w^2}. \label{r11-clean1}
\end{eqnarray}
The first equation $\frac{\partial A}{\partial \tau }=\frac{1}{2}R_{11}$ gives rise to the following system of
three equations:
\begin{eqnarray}
\frac{\de \,u}{\de \tau}&=&-(u+2d)^2, \label{rf-11-1}\\
u\frac{\de\, a}{\de \tau}&=& -a(a^2-b^2),\label{rf-11-2}\\
u\frac{\de \,  b}{\de \tau}&=& b(a^2-b^2).\label{rf-11-3}
\end{eqnarray}

\subsection{} Next, we check the other three equations in the Ricci flow equation. Using the equations (\ref{ic}), (\ref{ic-c1}),
as before, $R_{22}, R_{23}, R_{33}$ can  be written in a more symmetric manner
\begin{eqnarray}
\frac{u}{\bB} R_{22}&=& -\frac{\bB''}{2\bB}+\frac{\bB'}{\bB}\left( \frac{\cos 2\theta}{\sin 2\theta}+\frac{w'}{2w}\right)
  +\frac{1}{2}\frac{w''}{w}-\frac{1}{2}\left(\frac{w'}{w}\right)^2 -\frac{w'}{w}\frac{\cos 2\theta}{\sin 2\theta}-\frac{B'C'-D'^2}{2\Delta}\nonumber\\
&=&R_{11}-4-2\frac{\cos 2\theta}{\sin 2\theta}\frac{w'}{w}+4\frac{(u+2d)^2}{w}
- \frac{\bB''}{2\bB}+\frac{\bB'}{\bB}\left(\frac{\cos 2\theta}{\sin 2\theta}+\frac{w'}{2w}\right),\label{r22-sys} \\
\frac{u}{\bD}R_{23}&=&-\frac{\bD''}{2\bD}+\frac{\bD'}{\bD}\left(\frac{\cos 2\theta}{\sin 2\theta}+\frac{w'}{2w}\right)
+\frac{1}{2}\frac{w''}{w}-\frac{1}{2}\left(\frac{w'}{w}\right)^2 -\frac{w'}{w}\frac{\cos 2\theta}{\sin 2\theta}-\frac{B'C'-D'^2}{2\Delta}\nonumber\\
&=&R_{11}-4-2\frac{\cos 2\theta}{\sin 2\theta}\frac{w'}{w}+4\frac{(u+2d)^2}{w}
- \frac{\bD''}{2\bD}+\frac{\bD'}{\bD}\left(\frac{\cos 2\theta}{\sin 2\theta}+\frac{w'}{2w}\right), \label{r23-sys}\\
\frac{u}{\bC} R_{33}&=&-\frac{\bC''}{2\bC}+\frac{\bC'}{\bC}\left( \frac{\cos 2\theta}{\sin 2\theta}+\frac{w'}{2w}\right)
 +\frac{1}{2}\frac{w''}{w}-\frac{1}{2}\left(\frac{w'}{w}\right)^2 -\frac{w'}{w}\frac{\cos 2\theta}{\sin 2\theta}-\frac{B'C'-D'^2}{2\Delta} \nonumber\\
&=&R_{11}-4-2\frac{\cos 2\theta}{\sin 2\theta}\frac{w'}{w}+4\frac{(u+2d)^2}{w}
- \frac{\bC''}{2\bC}+\frac{\bC'}{\bC}\left( \frac{\cos 2\theta}{\sin 2\theta}+\frac{w'}{2w}\right).\label{r33-sys}
\end{eqnarray}

Using (\ref{this}), straightforward computation shows that the equation
 $\frac{\partial D}{\partial \tau} =\frac{1}{2}R_{23}$ is equivalent to
 \begin{eqnarray*}
 2\left(\frac{u}{w}\right)_\tau \frac{\bD}{u}+2\frac{u}{w}\left(\frac{\bD}{u}\right)_\tau &=& -\frac{\bD''}{2u}+\frac{\bD'}{4u}\left(4 \frac{\cos 2\theta}{\sin 2\theta}+2\frac{w'}{w}\right)\\
&\quad& +\frac{\bD}{u}\left(R_{11}-4-2\frac{\cos 2\theta\, w'}{\sin 2\theta\, w}+4\frac{(u+2d)^2}{w}\right).
 \end{eqnarray*}
 Here $(\cdot)_\tau$ means the derivative with respect to $\tau$.
 Using $2\frac{\partial\, A}{\partial \tau} =R_{11}$, and multiplying both sides of the equation by $u\cdot w$, this is further reduced to
 $$
 2(\bD_\tau \, u -u_\tau \, \bD)=-\frac{\bD''\, w}{2}+\frac{\bD'\, w}{4}\left(4 \frac{\cos 2\theta}{\sin 2\theta}+2\frac{w'}{w}\right)-\bD (4a^2+4b^2 \cos^2 2\theta)+4\bD (u+2d)^2.
 $$
 Since the first three terms on the right hand side add up to $0$, we have
 $$
 \left(c_\tau \, u -u_\tau\, c\right)\sin^2 2\theta =2c(u+2d)^2\sin^2 2\theta,
 $$
 which by (\ref{rf-11-1})  reduces to
 \begin{equation}\label{rf-23-1}
 \frac{\de\, c}{\de \tau}=-\frac{\de\, u}{\de \tau} \frac{c}{u}
 \end{equation}
or simply put $\frac{\de}{\de \tau} (u\, c) =0$, which is equivalent to $u(\tau)=\frac{\Lambda_1}{c(\tau)}$.

Before going further, it is helpful to note that $2\frac{\partial A}{\partial \tau} =R_{11}$ and $2\frac{\partial D}{\partial \tau}  =R_{23}$ are equivalent to the equations (\ref{rf-11-1}), (\ref{rf-11-2}),
(\ref{rf-11-3}) and (\ref{rf-23-1}). However, these four equations can be put into an even simpler form
\begin{equation}\label{rf-all}
\frac{\de\, u}{\de \tau}=-(u+2d)^2, \quad \frac{\de (u\, c)}{\de \tau}=0, \quad  \frac{\de (a\, b)}{\de \tau}=0
\end{equation}
which,  as we shall show, are equivalent to the Ricci flow equation (\ref{rf1}). Indeed, assuming the relations (\ref{rf-all}),
if we introduce a new function
$v\doteqdot u+2d$, and write $u\, c=\Lambda_1$, $a\, b =\Lambda_2$, a simple calculation, making use of the integrability conditions (\ref{ic}),  shows that
$$
\left[ \frac{(v-u)^2}{4}-\frac{\Lambda_1^2}{u^2}\right]\left[ \frac{(v+u)^2}{4}-\frac{\Lambda_1^2}{u^2}\right]=\Lambda_2^2.
$$
This, in turn, implies that
$$
v=\sqrt{\frac{u^4-4\sqrt{\Lambda_1^2+\Lambda_2^2} u^2 +4\Lambda_1^2}{u^2}}.
$$
Taking the derivative with respect to $\tau$ on both sides of the equation $a^2-\frac{\Lambda^2_2}{a^2}=a^2-b^2=u\, v$
and substituting the expression for $v$ just computed, we obtain
$$
2\frac{a_\tau}{a} \left(a^2+b^2\right)=-v\left(2u^2 -4\sqrt{\Lambda_1^2+\Lambda_2^2}\right).
$$
On the other hand, since $a^2=\frac{(v-u)^2}{4} -\frac{\Lambda^2_1}{u^2}$ and $ b^2=\frac{(v+u)^2}{4}-\frac{\Lambda_1^2}{u^2}$, the right hand side above is just $-v(a^2+b^2)$. Hence, we arrive at
$$
\frac{a_\tau}{a}=-v=-\frac{a^2-b^2}{u} ,
$$
which is (\ref{rf-11-2}). From this (\ref{rf-11-3}) follows easily.

Finally, we verify that $2\frac{\partial B}{\partial \tau}=R_{22}$ and $2\frac{\partial C}{\partial \tau}=R_{33}$. We only check the first one,
since the second is exactly the same. From (\ref{r22-sys}), making use of equation $2\frac{\partial A}{\partial \tau}=R_{11}$ in the same way as above,  $2\frac{\partial B}{\partial \tau}=R_{22}$ reduces to
$$
 2(\bB_\tau \, u -u_\tau \, \bB)=-\frac{\bB''\, w}{2}+\frac{\bB'\, w}{4}\left(4 \frac{\cos 2\theta}{\sin 2\theta}+2\frac{w'}{w}\right)-\bB (4a^2+4b^2 \cos^2 2\theta)+4\bB (u+2d)^2.
 $$
By (\ref{rf-11-1}) it can be further reduced to
 $$
 2\bB_\tau \, u=-\frac{\bB''\, w}{2}+\frac{\bB'\, w}{4}\left(4 \frac{\cos 2\theta}{\sin 2\theta}+2\frac{w'}{w}\right)-\bB (4a^2+4b^2 \cos^2 2\theta)+2\bB v^2.
 $$
 Recall that $v=u+2d$ and observe that $\bB =\frac{1}{2}\left(v (\cos 2\theta +1)-2\sin^2\theta\right)$,
$\bB'=-\sin 2\theta(v+2d\cos 2\theta)$ and $\bB''= -2\cos 2\theta(v+2d\cos2\theta)+4d\sin^2 2\theta$. Straightforward computation shows that the right hand side of the equation above becomes
$$
v^3-2v(a^2+b^2) +(v^3-2v(a^2+b^2))\cos 2\theta +(2vb^2-v^2 d) \sin^2 2\theta.
$$
Hence $2\frac{\partial B}{\partial \tau}=R_{22}$ gives rise to a system of two equations
\begin{eqnarray}
 u \, v_\tau &=& v^3-2v(a^2+b^2), \label{rf-22-1}\\
u\, d_\tau &=& v^2 d -2 v b^2. \label{rf-22-2}
\end{eqnarray}
Similar calculation shows that the equation $2\frac{\partial C}{\partial \tau}=R_{33}$ can also be reduced to the above set of equations. It is
now a simple matter of direct checking to show that (\ref{rf-22-1}) and (\ref{rf-22-2}) follow from the equations (\ref{rf-all}). For example
\begin{eqnarray*}
u\, v_\tau &=& u \left(\frac{a^2-b^2}{u}\right)_\tau \\
&=& (a^2-b^2)_\tau -\frac{a^2-b^2}{u}u_\tau\\
&=& -2(a^2+b^2)v +v^3.
\end{eqnarray*}
Here, we make use of (\ref{rf-11-1})--(\ref{rf-11-3}), which are consequences of (\ref{rf-all}), to go from the second to the third line. Similarly,  (\ref{rf-22-2}) also follows from (\ref{rf-all}).

Summarizing the above, we showed the following.

\begin{proposition}\label{Fateev=2eqs}  Under the ansatz (\ref{ABCD}), the Ricci flow equation (\ref{rf1}) is equivalent to the three equations in (\ref{rf-all}).
\end{proposition}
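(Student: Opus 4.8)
The plan is to establish the stated equivalence by decomposing the Ricci flow equation (\ref{rf1}) into its components in the coordinate frame $(\theta,\chi_1,\chi_2)$. Because the ansatz (\ref{ABCD}) has $g_{12}=g_{13}=0$ and the Ricci tensor computed in (\ref{r11})--(\ref{r33}) likewise satisfies $R_{12}=R_{13}=0$, equation (\ref{rf1}) is equivalent to exactly four scalar equations, $2\partial_\tau A = R_{11}$, $2\partial_\tau B = R_{22}$, $2\partial_\tau C = R_{33}$, and $2\partial_\tau D = R_{23}$. The crucial structural fact --- which the integrability conditions (\ref{ic}) are designed to produce --- is that each of these \emph{a priori} PDEs in $(\tau,\theta)$ becomes, after substitution of (\ref{ic})--(\ref{this}), a polynomial identity in $\cos 2\theta$ whose coefficients are ODEs in $\tau$ alone.

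For the forward implication I would first analyze $2\partial_\tau A = R_{11}$. Inserting the closed form (\ref{r11-clean1}) for $R_{11}$ together with $A=u/w$, where $w=a^2-b^2\cos^2 2\theta$, and matching the constant and $\cos^2 2\theta$ coefficients, yields precisely the three ODEs (\ref{rf-11-1})--(\ref{rf-11-3}). Treating $2\partial_\tau D = R_{23}$ the same way --- using (\ref{this}), multiplying by $u\,w$, and noting (as after (\ref{r23-sys})) that the pure $\theta$-derivative terms cancel --- gives (\ref{rf-23-1}), i.e.\ $\partial_\tau(uc)=0$. Since (\ref{rf-11-2}) and (\ref{rf-11-3}) force $(ab)_\tau = b\,a_\tau + a\,b_\tau = \tfrac{1}{u}\bigl(-ab(a^2-b^2)+ab(a^2-b^2)\bigr) = 0$, the system (\ref{rf-all}) follows at once from RF.

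The reverse implication is where the real work lies, and it splits into two parts. First I must recover the individual evolutions (\ref{rf-11-2}), (\ref{rf-11-3}) from the compact system (\ref{rf-all}) together with (\ref{ic}); this is the algebraic heart of the argument. Writing $v=u+2d$, $uc=\Lambda_1$, $ab=\Lambda_2$, the integrability conditions impose the quartic relation
$$
\Bigl[\tfrac{(v-u)^2}{4}-\tfrac{\Lambda_1^2}{u^2}\Bigr]\Bigl[\tfrac{(v+u)^2}{4}-\tfrac{\Lambda_1^2}{u^2}\Bigr]=\Lambda_2^2,
$$
which solves for $v$ explicitly as a function of $u$; differentiating $a^2-b^2 = uv$ in $\tau$ and substituting then returns $a_\tau/a = -v = -(a^2-b^2)/u$, which is (\ref{rf-11-2}), with (\ref{rf-11-3}) immediate. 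At this stage the $A$- and $D$-equations are verified.

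The main obstacle is then the second part: showing that the $B$- and $C$-equations, which never entered the derivation of (\ref{rf-all}), are automatically satisfied. Using the symmetric forms (\ref{r22-sys}), (\ref{r33-sys}) and substituting the now-established $A$-equation, I would reduce $2\partial_\tau B = R_{22}$ to the pair (\ref{rf-22-1}), (\ref{rf-22-2}) by expanding in $\cos 2\theta$ and $\sin^2 2\theta$; the explicit expressions for $\bB$, $\bB'$, $\bB''$ recorded in the text make the right-hand side manifestly a combination of $1$, $\cos 2\theta$, $\sin^2 2\theta$, so matching coefficients is routine once organized. Finally (\ref{rf-22-1}) and (\ref{rf-22-2}) are checked to follow from (\ref{rf-all}) by direct differentiation, for instance $u\,v_\tau = u\bigl(\tfrac{a^2-b^2}{u}\bigr)_\tau = (a^2-b^2)_\tau - \tfrac{a^2-b^2}{u}u_\tau = v^3 - 2v(a^2+b^2)$; the $C$-equation is identical under the symmetry $\theta\mapsto \tfrac{\pi}{2}-\theta$, which interchanges $B$ and $C$. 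The delicate point throughout is the bookkeeping guaranteeing that all $\theta$-dependence collapses onto the single basis $\{1,\cos 2\theta,\sin^2 2\theta\}$ --- it is exactly here that (\ref{ic}) is indispensable, and without it the overdetermined four-equation system would fail to be consistent.
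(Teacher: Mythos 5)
Your proposal is correct and follows essentially the same route as the paper's appendix: extract (\ref{rf-11-1})--(\ref{rf-11-3}) and (\ref{rf-23-1}) from the $A$- and $D$-components, recover the individual evolutions from (\ref{rf-all}) via the quartic relation for $v=u+2d$ coming from the integrability conditions (\ref{ic}), and then reduce the $B$- and $C$-components to (\ref{rf-22-1})--(\ref{rf-22-2}), which follow from (\ref{rf-all}) by direct differentiation. No substantive differences from the paper's argument.
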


\subsection{}
Finally, solving (\ref{rf-all}) in order to obtain the explicit formulae (\ref{formulae}) requires a skillful maneuver of  change of variables. This was done on pages 522--523 of \cite{Fa96}. We include the details below for completeness of our presentation. Let $u_1^2$ and $u_2^2$ be given by
$$
u_{1}^2= 2(\sqrt{\Lambda_1^2+\Lambda_2^2}+\Lambda_2), \quad u_{2}^2= 2(\sqrt{\Lambda_1^2+\Lambda_2^2}-\Lambda_2).
$$
Then, $v^2=\frac{(u^2-u_1^2)(u^2-u_2^2)}{u^2}$ and therefore
$$
\frac{\de\, u}{\de \tau} =-\frac{(u^2-u_1^2)(u^2-u_2^2)}{u^2}.
$$
The parameters $\nu$ and $k$ are introduced by writing
$$
u_1=\frac{\nu}{1-k^2}, \quad u_2=\frac{\nu k}{1-k^2} ,
$$
which also amount to
$$
4\Lambda_2=\frac{\nu^2}{1-k^2}, \quad 2\Lambda_1 =\frac{\nu^2 k}{(1-k^2)^2}.
$$
The ODE for $u$ is separable and can be solved by writing
\begin{equation}\label{1st-int}
-\frac{\nu}{1-k^2}\int \frac{1}{u^2-u_1^2}\, du +\frac{\nu k^2}{1-k^2}\int \frac{1}{u^2-u_2^2}\, du =\tau \nu
\end{equation}
 and letting
 $$
 \xi=\coth ^{-1}\left( \frac{(1-k^2)u}{\nu}\right)=-\frac{\nu}{1-k^2}\int \frac{1}{u^2-u_1^2}\, du.
 $$
Hence, one arrives at
   (\ref{eq-xitau}) by integrating the second term in (\ref{1st-int}), namely
$$
\nu \tau =f(\xi) \doteqdot \xi -\frac{k}{2}\log \left(\frac{1+k\tanh \xi}{1-k \tanh \xi}\right).
$$
Note that
$$
\frac{\de\, f}{\de \xi}=\frac{(1-k^2) \cosh ^2 \xi}{\cosh^2 \xi-k^2\sinh ^2 \xi} ,
$$
which approaches to $1-k^2$ as $\xi\to 0$ and approaches to $1$ as $\xi \to \infty$.
  Now $u=\frac{\nu}{1-k^2} \coth \xi$. The explicit formulae for the functions $a, b, d, c$ can be derived from the expression for $u$.

Finally we remark on another computation leading to Proposition \ref{Fateev=2eqs}. First note that (\ref{r22-clean})--(\ref{r33-clean}) can be written as
\begin{eqnarray}
\frac{A}{B}R_{22}&=& \frac{B'}{4B}\left(\frac{A'}{A}+\frac{\Delta'}{\Delta}-2\frac{B'}{B}\right)
-\frac{1}{2}\left(\frac{B'}{B}\right)'-
\frac{B'C'-D'^2}{2\Delta},  \label{r22-clean1}\\
\frac{A}{D}R_{23}&=& \frac{D'}{4D}\left(\frac{A'}{A}+\frac{\Delta'}{\Delta}
-2\frac{D'}{D}\right)-\frac{1}{2}\left(\frac{D'}{D}\right)'-
\frac{B'C'-D'^2}{2\Delta}, \label{r23-clean1}\\
\frac{A}{C}R_{33}&=& \frac{C'}{4C}\left(\frac{A'}{A}+\frac{\Delta'}{\Delta}-2\frac{C'}{C}\right)
-\frac{1}{2}\left(\frac{C'}{C}\right)'-
\frac{B'C'-D'^2}{2\Delta}.   \label{r33-clean1}
\end{eqnarray}
Then direct computation shows that
\begin{eqnarray}
&\,&\frac{u}{B}R_{22}= 2(u+2d)^2+4(a^2-b^2) \frac{ a^2 +b^2\cos^2 2\theta}{w}-4(u+2d) \frac{a^2+b^2\cos 2\theta}{u+d +d \cos2\theta}, \nonumber\\
&\,&\frac{u}{D}R_{23}= 2(u+2d)^2+4(a^2-b^2) \frac{ a^2 +b^2\cos^2 2\theta}{w}, \nonumber\\
&\,&\frac{u}{C}R_{33}=  2(u+2d)^2+4(a^2-b^2) \frac{ a^2 +b^2\cos^2 2\theta}{w}-4(u+2d) \frac{a^2-b^2\cos 2\theta}{u+d -d \cos2\theta}.  \nonumber
\end{eqnarray}
From this together with (\ref{r11-clean1}), one can also derive Proposition \ref{Fateev=2eqs}
\section*{Acknowledgments.} { SLK and LN would like to thank  Nolan Wallach for helpful discussions, Toti  Daskalopoulos, Richard Hamilton and Tom Ilmanen for their interests to this work. LN's research is  supported by NSF grant DMS-0805834.
 }

\bibliographystyle{amsalpha}

{\sc  Addresses}

{\sc Ioannis Bakas},
Department of Physics, University of Patras, 26500 Patras, Greece

email: bakas@ajax.physics.upatras.gr

\medskip

{\sc Shengli Kong},
Department of Mathematics, University of Science and Technology of China, Hefei, Anhui 230026, CHINA

email: kongs@ustc.edu.cn

\medskip

{\sc Lei Ni},
 Department of Mathematics, University of California at San Diego, La Jolla, CA 92093, USA


email: lni@math.ucsd.edu

\end{document}